\newcommand{\T}    {\mathbb{T}}
\newcommand{\D}    {\mathbb{D}}
\newcommand{\R}    {\mathbb{R}}
\newcommand{\C}    {\mathbb{C}}
\newcommand{\N}    {\mathbb{N}}
\newcommand{\Z}    {\mathbb{Z}}
\newcommand{\Pj}   {\mathbf{P}}
\newcommand{\CoM}  {\mathbf{BVT}}
\newcommand{\pma}  {\mathbf{MA}}
\newcommand{\W}    {\mathcal{W}}
\newcommand{\Pc}   {\mathcal{P}}
\newcommand{\Mc}   {\mathcal{M}}
\newcommand{\Rc}   {\mathcal{R}}
\newcommand{\K}    {\mathcal{K}}
\newcommand{\supp} {\textnormal{supp}}
\newcommand{\cp}   {\textnormal{cap}}
\newcommand{\dist} {\textnormal{dist}}
\newcommand{\diam} {\textnormal{diam}}
\newcommand{\Arg}  {\textnormal{Arg}}
\newcommand{\ang}  {\textnormal{Angle}}
\newcommand{\re}   {\textnormal{Re}}
\newcommand{\Log}  {\textnormal{Log}}
\newcommand{\spp}  {S}
\newcommand{\rspp} {\spp^\prime}
\newcommand{\wspp} {\widetilde\spp}
\newcommand{\mes}  {\lambda}
\newcommand{\rmes} {\lambda^\prime}
\newcommand{\wmes} {\widetilde\lambda}
\newcommand{\Lm}   {\Lambda}
\newcommand{\arm}  {\varphi}
\newcommand{\angs} {\theta}
\newcommand{\cws}  {\stackrel{*}{\rightarrow}}
\newcommand{\cic}  {\stackrel{\scriptsize\cp}{\rightarrow}}
\newcommand{\ged}  {\mu_{(\spp,\T)}}
\newtheorem{theorem}{Theorem}[section]
\newtheorem{lemma}[theorem]{Lemma}
\newtheorem{corollary}[theorem]{Corollary}
\numberwithin{equation}{section}
\begin{document}

\title[Meromorphic Approximants to Cauchy Transforms]{Meromorphic Approximants to Complex Cauchy Transforms with Polar Singularities}

\author{L.Baratchart}

\address{INRIA, Project APICS \\
2004 route des Lucioles --- BP 93 \\
06902 Sophia-Antipolis, France}

\email{laurent.baratchart@sophia.inria.fr}

\author{M.Yattselev}

\address{INRIA, Project APICS \\
2004 route des Lucioles --- BP 93 \\
06902 Sophia-Antipolis, France}

\email{myattsel@sophia.inria.fr}

\date{\normalsize \today}

\begin{abstract}
We study AAK-type meromorphic approximants to functions of the form
$$F(z) = \int\frac{d\lambda(t)}{z-t}+R(z),$$
where $R$ is a rational function and $\lambda$ is a complex measure with compact regular support included in $(-1,1)$, whose argument has bounded variation on the support. The approximation is understood in $L^p$-norm of the unit circle, $p\geq2$. We dwell on the fact that the denominators of such  approximants satisfy certain non-Hermitian orthogonal relations with varying weights. They resemble the orthogonality relations that arise in the study of  multipoint Pad\'e approximants. However, the varying part of the weight implicitly depends on the orthogonal polynomials themselves, which constitutes the main novelty and the main difficulty of the undertaken analysis. We obtain that the counting measures of poles of the approximants converge to the Green equilibrium distribution on the support of $\lambda$ relative to the unit disk, that the approximants themselves converge in capacity to $F$, and that the poles of $R$ attract at least as many poles of the approximants as their multiplicity and not much more. 
\end{abstract}

\subjclass{primary 41A20, 41A30, 42C05; secondary 30D50, 30D55, 30E10, 31A15}

\keywords{meromorphic approximation, AAK-theory, rational approximation, orthogonal polynomials, non-Hermitian orthogonality, Hardy spaces, critical points.}

\maketitle

\section{Introduction}
\label{sec:intro}

This paper is concerned with the asymptotic behavior of certain meromorphic
 approximants to functions of the form
\begin{equation}
\label{fH}
F(z)=\int\frac{d\mes(t)}{z-t}+R(z),
\end{equation}
where $R$ is a rational function, holomorphic at infinity, and
$\mes$ is a complex measure compactly and regularly supported on $(-1,1)$. 

The meromorphic approximants that we consider
are optimal, for fixed number of poles in the unit disk, 
with respect to an $L^p$-norm on 
the unit circle. When studying them, we assume
that $\supp(\mes)$ and all poles of $R$ lie in the open unit disk, 
so that $F$ is indeed $p$-summable on the unit circle. 
The asymptotics are then understood 
when the number of poles grows large. In the case where $p=\infty$, this type 
of approximant was introduced by V. M. Adamyan, D. Z. Arov, and M. G. Krein 
in their famous paper
\cite{AAK71}. Here we deal with their natural generalization to $L^p$,
although we restrict ourselves to the range $2\leq p\leq\infty$ for technical
reasons to be explained later. The meromorphic approximation problem also has
a conformally invariant formulation on Jordan domains with rectifiable
boundary, to which the results of the present paper transpose with obvious
modifications
if $\supp(\mes)$ is contained in a closed hyperbolic geodesic arc rather
than a segment. The interested
reader will have no difficulty to carry out this generalization using the
construction of \cite[Sec. 5]{BMSW06}.

The study of best meromorphic approximants is quite recent. After the development
of the Adamyan-Arov-Krein theory for $p=\infty$ in \cite{AAK71}, the latter was 
extended to the range $1\leq p<\infty$ by 
F. Seyfert and the first author in \cite{BS02}, 
and independently by V. A. Prokhorov in \cite{Pr02}.
However, it is only for $p\geq2$ that the authors of \cite{BS02} were able to 
express the error in terms of (generalized) singular vectors of a Hankel
operator and subsequently to obtain integral formulas for that error 
when the approximated function is represented as a Cauchy integral.
These formulas make connection with non-Hermitian orthogonality, and form the
basis of the present approach. 

The AAK theory had considerable impact in rational approximation,
for on retaining only the principal part of a best meromorphic approximant 
to a function analytic outside the disk and sufficiently smooth on the circle,
one obtains a near-best rational approximant  to that function
\cite{Glov84}.
This is instrumental in
Parfenov's solution, for simply connected domains,
to the Gonchar conjecture \cite{Gon78b}
on the degree of rational approximation to holomorphic 
functions on compact subsets of their domain of analyticity\footnote{
the proof of this conjecture
was later carried over to the multiply connected case by Prokhorov in 
\cite{Pr93a}.}, and also
for instance in Peller's converse theorems
on smoothness of functions from their error rates in rational approximation
\cite{Pel80,Pel83}. The same principle is also at work in the articles
\cite{Br87, And94} that deal with rational approximation to Markov
functions, that is, functions of the form (\ref{fH}) where $R\equiv0$ and
$\lambda$ is positive. 

Another connection between meromorphic approximation and rational 
approximation that ought to be mentioned occurs when $p=2$. In this case,
a best meromorphic approximant to a function
analytic outside the unit disk is in fact rational, and 
turns out to be a special type
of multipoint Pad\'e approximant that interpolates the function
with order 2 at the reflections  of its poles across the unit circle
\cite{Lev69, BSW96}. Of course the interpolation points are not known 
{\it a priori} which accounts for the nonlinearity of the problem.
Nevertheless, after the work in \cite{GL78},  
this connexion was used
in \cite{BStW01} to establish the convergence rate
of best  $L^2$ rational approximants to Markov functions.
The forthcoming results will, in particular, generalize these results to a 
larger class of functions.

Best meromorphic approximants to Markov functions were studied {\it per se} by E. B. Saff, V. Prokhorov and the first author
in \cite{BPS01a}. Using results from \cite{And94} to make connection with
orthogonality, these authors prove (and give error rates for)
the uniform convergence of such
approximants, locally uniformly on $\overline{\C}\setminus I$, 
whenever $1\leq p\leq\infty$ provided that $\mes$ satisfies the Szeg\H{o}
condition: $\log d\mes/dt\in L^1(I)$. 

The present paper appears to be the first to deal with convergence of best meromorphic approximants to general functions of the form (\ref{fH}) in the case where $\mes$ is a complex measure (the special case when $F$ has two branchpoints and no poles is treated in \cite[Sec. 10]{BS02}). 

This paper is organized as follows. Section \ref{sec:merom} deals with meromorphic 
approximants that are solutions (more generally: critical points) of the meromorphic approximation problem for functions of the form (\ref{fH}). We discuss the asymptotics of poles as being the limit zero distribution of polynomials satisfying certain non-Hermitian orthogonality relations with respect to varying measures. We apply the results to the convergence in capacity of these approximants, and to the convergence of some of their poles to the polar singularities of $F$. All the proofs are presented in Section \ref{sec:proofs}. Some computational results are adduced in Section \ref{sec:numer} and the Appendix contains necessary material and notation from potential theory that we use throughout the paper.

Finally, we mention that all the results below have their counterpart in diagonal multipoint Pad\'e approximation, where they allow more irregular $\lambda$ than can usually be handled via classical results (see \cite{GRakh87}). We do not include this to keep the size of the paper within reasonable bounds, and refer the interested reader to \cite[Ch. III]{thYat} or \cite{uBY2}.

\section{Meromorphic Approximation}
\label{sec:merom}

Let $\mes$ be a complex Borel measure whose support
$\spp:=\supp(\mes)\subset(-1,1)$ consists of infinitely many points. 
Denote by $|\mes|$ the total variation measure. 
Clearly $\mes$ is absolutely continuous with respect to $|\mes|$, and we 
shall assume that its Radon-Nikodym derivative (which is of unit
modulus $|\mes|$-a.e.) is of bounded variation. 
In other words, $\mes$ is of the form
\begin{equation}
\label{eq:mesDecomp}
d\mes(t)=e^{i\arm(t)}d|\mes|(t),
\end{equation}
for some real-valued argument function $\arm$ such that\footnote{Note that $e^{i\arm}$ has
  bounded variation if and only if $\arm$ can be chosen of bounded
  variation.}
\begin{equation}
\label{boundedvarphi}
V(\arm,\spp):=\sup\left\{\sum_{j=1}^N|\arm(x_j)-\arm(x_{j-1})|\right\}<\infty,
\end{equation}
where the supremum is taken over all finite sequences $x_0<x_1<\ldots<x_N$ in $\spp$ as $N$ ranges over $\N$.  

For convenience, we extend the definition of $\arm$ to the whole of $\R$ as
follows. Let $I:=[a,b]$ be the convex hull of $\spp$. It is easy to see that
if we interpolate $\arm$ linearly in each component of $I\setminus\spp$ and if
we set $\arm(x):=\lim_{t\to a, \; t\in\spp}\arm(t)$ for $x<a$ and
$\arm(x):=\lim_{t\to b, \; t\in\spp}\arm(t)$ for $x>b$ (the limits exist by
(\ref{boundedvarphi})),  the variation of $\arm$ will remain the same. In
other words, we may arrange things so that the extension of $\arm$, still
denoted by $\arm$, satisfies
$$V(\arm,\spp)=V(\arm,\R)=:V(\arm).$$

Among all complex Borel measures of type  (\ref{eq:mesDecomp})-(\ref{boundedvarphi}), we shall consider only a subclass $\CoM$ defined as follows. We say that a complex measure $\mes$, supported on $(-1,1)$, belongs to the class $\CoM$ if
\begin{itemize}
        \item[(1)] {\it $\supp(\mes)$ is a regular set};
        \item[(2)] {\it there exist positive constants $c$ and $L$ such that, for any $x\in\supp(\mes)$ and $\delta\in(0,1)$,  the total variation of $\mu$ satisfies $|\mes|([x-\delta,x+\delta])\geq c\delta^L$};
        \item[(3)] {\it $\mes$ has an argument of bounded variation}.
\end{itemize}

Denote by $\Pc_n$ the space of algebraic polynomials of degree at most $n$ and
by $\Mc_n$ the subset consisting of monic polynomials of degree $n$ 
whose zeros lie in the open unit disk, $\D$. 

Define
\begin{equation}
\label{eq:mainFun}
F(z):=\int\frac{d\mes(\xi)}{z-\xi}+R_s(z),
\end{equation}
with $\mes\in\CoM$ and $R_s\in\Rc_{s-1,s}$, where
$$\Rc_{m,n}:=\{p_m/q_n: \; p_m\in\Pc_m, \; q_n\in\Mc_n\}$$
is the set of rational functions of type $(m,n)$ with all their poles in 
$\D$. Hereafter we shall denote by $Q_s$ the denominator of $R_s$, assumed to
be in irreducible form, which is a monic polynomial with zeros in $\D$ of the
form 
\begin{equation}
\label{defQs}
Q_s(z)=\prod_{\eta\in\rspp}(z-\eta)^{m(\eta)},
\end{equation}
where $\rspp$ is the set of poles of $R_s$ and $m(\eta)$ stands for the
multiplicity of $\eta\in\rspp$. Thus, $F$ is a meromorphic function in
$\C\setminus\spp$ with poles at each point of $\rspp$ and therefore holomorphic in $\overline\C\setminus\wspp$, where
$$\wspp:=\spp\cup\rspp.$$ 
Note in passing that $F$ does not reduce to a rational function since $\spp$ 
consists of infinitely many points, cf. \cite[Sec. 5.1]{BMSW06} for a 
detailed argument.

In this paper we consider the behavior of certain meromorphic (AAK-type)
 approximants to a function $F$ of the form 
(\ref{eq:mainFun}). Of particular importance to us will be the asymptotic
behavior of the poles of the above-mentioned approximants. The latter will be
quantified in terms of the weak$^*$ convergence,
when the number of poles increases indefinitely, of 
the \emph{counting measures} of these poles. By definition, the counting
measure of the poles of a meromorphic function is
the discrete probability measure with equal mass at each finite pole, counting 
multiplicities. The weak$^*$ convergence is understood in the usual sense 
where measures, endowed with the norm of total variation,
are regarded as the dual space to continuous functions with compact support.

We denote by $H^p$, $p\in[1,\infty]$, the {\it Hardy space} of the unit disk consisting of holomorphic functions $f$ such that
\begin{equation}
\label{eq:defNorm}
\begin{array}{lll}
\displaystyle \|f\|_p^p := \sup_{0<r<1}\frac{1}{2\pi}\int_{\T}|f(r\xi)|^p|d\xi|<\infty  & \mbox{ if } & p\in[1,\infty), \smallskip \\
\displaystyle \|f\|_{\infty}:=\sup_{z\in\D}|f(z)|<\infty  & \mbox{ if } & p=\infty.
\end{array}
\end{equation}
It is known (\cite[Thm. I.5.3]{Garnett}) that a function in $H^p$ is uniquely determined by its trace (nontangential limit) on the unit circle, $\T$, and that the $L^p$-norm of this trace is equal to the $H^p$-norm of the function, where $L^p$ is the space of $p$-summable functions on $\T$. This
way $H^p$ can be regarded as a closed subspace of $L^p$.   Analogously, we define $\bar H_0^p$, $p\in[1,\infty]$, consisting of holomorphic functions in $\overline\C\setminus\overline\D$ that vanish at infinity and satisfy (\ref{eq:defNorm}) this time with $1<r<\infty$.

Now, the meromorphic approximants that we deal with are defined as
follows. For $p\in[1,\infty]$ and $n\in\N$, the 
{\it class of meromorphic functions of degree} $n$ in $L^p$ is 
\begin{equation}
\label{alternativemerodef}
H^p_n:=H^p+\Rc_{n-1,n} = H^pB_n^{-1},
\end{equation}
which is a closed subset of $L^p$ (it is in fact weakly closed if 
$1<p<\infty$ and weak$^*$ closed if $p=\infty$, see \cite[Lemma 5.1.]{BS02}). In (\ref{alternativemerodef}) we denote by $B_n$ the {\it set of Blaschke products of degree at most} $n$, consisting of rational functions of the form
$$b(z)=e^{ic}\frac{q(z)}{\widetilde q(z)}, \;\;\; q\in\Mc_k, \;\;\; \mbox{where} \;\;\; \widetilde q(z) := z^k\overline{q(1/\bar z)}, \;\;\; k\leq n.$$
We shall call $\widetilde q$ the {\it reciprocal polynomial} of $q$ in $\Pc_k$. We also say that $b$ is {\it normalized} if $e^{ic}=1$. Thus, the members of $B_n$ are rational functions of degree at most $n$
holomorphic in $\D$ and having modulus $1$ everywhere on $\T$. 

Our {\it best-$L^p$ meromorphic approximation problem} can now be stated as follows.
\newline
\newline
{\it $\pma(p):$ Given $p\in[1,\infty]$, $f\in L^p$,  and $n\in\N$, find $g_n\in
  H^p_n$ such that}
\begin{equation}
\label{eq:merApprProblem}
\|f-g_n\|_p=\inf_{g\in H^p_n}\|f-g\|_p.
\end{equation}

Originally this problem was solved for the case $p=\infty$ by V. M. Adamyan,
D. Z. Arov, and M. G. Krein in \cite{AAK71} and the solution came through
operator theory. The most accessible reference to this result is perhaps
\cite{Young}. Later F. Seyfert and the first author \cite{BS02}, and independently 
V. A. Prokhorov \cite{Pr02}, generalized it to the case $1\leq p\leq\infty$,
but it is only in \cite{BS02} and when $p\in[2,\infty]$ that concrete
equations were obtained for the approximants. These form the basis of our
approach, and presently limit its scope to $p\geq2$.

This solution of $\pma(\infty)$ is known to be unique, provided that $f$ belongs to the \emph{Douglas algebra} $H^\infty+C(\T)$, where $C(\T)$ denotes the space of continuous 
functions on $\T$ \cite{AAK71}. In particular, the solution to $\pma(\infty)$ 
is unique when $f$ is of type (\ref{eq:mainFun}) since the latter
is analytic in some neighborhood of the unit circle ({\it i.e.} in the
complement of $\wspp$). When $p<\infty$, a solution needs not be unique even
if $f$ is very smooth \cite[Sec. 5]{BS02}. Therefore, when making a 
statement about a sequence $\{g_n\}$ of solutions to $\pma(p)$, 
it is understood that a particular solution 
has been selected for each $n$ and that the statement holds true
regardless the selection. 

Given $F$ as in (\ref{eq:mainFun}), we shall be interested
in three types of questions:
\begin{itemize}
        \item[(a)] {\it What is the asymptotic distribution of the poles of
          best-$L^p$ meromorphic approximants to $F$
          as $n$ tends to $\infty$?}
        \item[(b)] {\it Do some of these poles converge to the polar singularities of $F$?} 
        \item[(c)] {\it What can be said about the convergence of such approximants to $F$?}
\end{itemize}

As it is the case of interest here, we shall restrict our discussion to 
the situation where the approximated
function is of the form (\ref{eq:mainFun}), and accordingly we write $F$ 
instead of $f$.
We should note that $\pma(2)$ reduces to rational 
approximation. Indeed,  $L^2$ can be 
decomposed into the orthogonal sum of $H^2$ and its orthogonal complement 
$\bar H^2_0,$ which consists of analytic functions in $\C\setminus\overline\D$
vanishing at $\infty$ with norm $\sup_{r>1}\|f(r\cdot)\|_2$ (compare
(\ref{eq:defNorm})).  
Now, since $F$ is the Cauchy transform of a measure supported in $\D$, it
belongs to $\bar H^2_0$. Thus for any $g=(h+p_{n-1}/q_n)\in H^2_n$ with $h\in
H^2$ and $p_{n-1}/q_n$ a rational function in $\bar H^2_0$, we get by 
orthogonality
$$\|F-g\|_2^2=\|h\|_2^2+\|F-p_{n-1}/q_n\|_2^2.$$
Clearly then, for $g$ to be a best approximant $h$ must be zero. It also turns out in this case that best approximants interpolate $F$ with order 2 at the reflections of their poles across $\T$ and also at 
infinity with order 1. Thus, one can regard $\pma(2)$ as an interpolation problem of the multipoint Pad\'e type where the interpolation points are \emph{implicitly defined} by the solution. Despite this, we shall not distinguish $p=2$ from the other cases but rather keep a unified operator approach.

Let us denote by $\Pj_+$ and $\Pj_-$ the analytic and 
anti-analytic projections acting on Fourier series as
$$\Pj_+\left(\sum_{k=-\infty}^{+\infty}a_ke^{ik\theta}\right)=
\sum_{k=0}^{+\infty}a_ke^{ik\theta}~~~~~~~~
\Pj_-\left(\sum_{k=-\infty}^{+\infty}a_ke^{ik\theta}\right)=
\sum_{k=-\infty}^{-1}a_ke^{ik\theta}.$$
By a well-known theorem of M. Riesz \cite{Garnett}, ${\bf P}_+:L^p\to H^p$ and
${\bf P}_-:L^p\to {\bar H}_0^p$ are bounded when $1<p<\infty$, and
when $p=2$ they are just the
orthogonal projections associated to the orthogonal decomposition: 
$L^2=H^2\oplus{\bar H}_0^2$. When $f\in L^1$, we simply
regard  ${\bf P}_+(f)$ and ${\bf P}_-(f)$ as Fourier 
series of distributions.

For each $p\in[2,\infty]$, the {\it Hankel operator with symbol} $F\in L^p$ is given by
$$\begin{array}{rll}
A_F:H^{p^\prime} & \to     & \bar H^2_0 \\
u                & \mapsto & \Pj_-(Fu), 
\end{array}$$
where $p^\prime$ is conjugate to $p$ modulo 2, 
i.e. $1/p+1/p^\prime=1/2$. 

For $n=0,1,2,\dots$, the {\it $n$-th singular number} of the operator 
$A_F$ is defined to be 
$$\sigma_n(A_F):=\inf\left\{|||A_F-\Gamma|||, \;\;\; \Gamma:H^{p^\prime}\to \bar H^2_0 \mbox{ a linear operator of rank } \leq n\right\},$$
where $|||\cdot|||$ stands for the operator norm;
when $p=2$ we assume in addition that $\Gamma$ is weak$^*$ continuous. 

Note that if $p=\infty$ then $p^\prime=2$, hence $A_F$ operates 
between  Hilbert spaces, and since it is compact\footnote{This can be deduced
  from the fact that $F\in C(\T)$, see \cite[Thm. I.5.5]{Peller}.}  
the $\sigma_n(A_F)$ are just the singular values of $A_F$, that is,
the square-roots of the eigenvalues of $A^*_FA_F$ arranged in nonincreasing 
order; throughout $A^*_F$ indicates the adjoint of $A_F$. When $2\leq
p<\infty$, the usual eigenvector equation gets replaced by a \emph{nonlinear}
equation of Hammerstein type. More precisely, to each 
$\sigma_n(A_F)$ there exists (at least one) 
$v_n\in H^{p^\prime}$ of unit norm, whose inner factor\footnote{Recall that any function $h$ from a Hardy space can be written as the product of an inner and an outer factors. The outer factor of $h$ is equal to the exponential of the Riesz-Herglotz transform of $\log|h|$, and the inner factor, which is a $H^\infty$ function unimodular a.e. on $\T$, consists of a Blaschke product (finite or infinite) and a singular inner factor which is the exponential of the Riesz-Herglotz transform of a singular measure on the unit circle. More on the inner-outer factorization of $H^p$ functions can be found, for example, in \cite{Garnett}.} is a Blaschke product
of degree at most $n$, such that 
\begin{equation}
\label{singvn}
\begin{array}{ll}
A_F^*A_F(v_n)=\sigma_n^2(A_F)\Pj_+\left(|v_n|^{p^\prime-2}v_n\right) & ~\mbox{if } ~p>2, \smallskip \\
A_F^*A_F(v_n)=\Pj_+\left(|A_F(v_n)|^2v_n\right)~~~{\rm and}~~~
\|A_F(v_n)\|_2=\sigma_n(A_F) & ~\mbox{if } ~p=2.
\end{array}
\end{equation}
Such a $v_n$ will be called a $n$-th singular vector for $A_F$.
From the definition it follows that
a $n$-th singular vector can be factored as
\begin{equation}
v_n=b_nw_n
\label{eq:innerOuterFactor}
\end{equation}
where $b_n\in B_n$ and $w_n$ is an 
outer function of unit norm in $H^{p^\prime}$. Actually, upon pairing with $v_n$,
equation (\ref{singvn}) implies that $v_n\in B_n$ if $p=2$, so that $w_n\equiv1$ in
this case. 

The solutions to $\pma(p)$ turn out to be exactly the functions of 
the form (\cite[Thm. 8.2]{BS02})
\begin{equation}
\label{eq:operatorBestAppr}
g_n=F-\frac{A_F(v_n)}{v_n}=\frac{\Pj_+(Fv_n)}{v_n},
\end{equation}
where $v_n$ is some $n$-th singular vector for $A_F$. Moreover, we have that
$\|F-g_n\|_p=\sigma_n(A_F)$. Such a $g_n$ is called a best meromorphic 
$L^p$-approximant of order $n$ to $F$.

The notion of a best approximant can be further weakened to the notion of 
a {\it critical point}. By definition, a function $g_n$ is a critical point of
order $n$ in $\pma(p)$ if and only if it assumes the form
\begin{equation}
\label{eq:DefCrPt}
g_n=F-\frac{A_F(v_n)}{v_n},
\end{equation}
where $v_n$ is a $H^{p^\prime}$ function of unit norm (a Blaschke product if
$p=2$) whose inner factor lies in $B_n\setminus B_{n-1}$ and
which is such that
\begin{equation}
\label{eq:criticalPoint}
\begin{array}{ll}
A_F^*A_F(v_n)=\gamma_n\Pj_+\left(|v_n|^{p^\prime-2}v_n\right), \;\;\; \gamma_n\in\R, & ~\mbox{if } ~p>2, \smallskip \\
A_F^*A_F(v_n)=\Pj_+\left(|A_F(v_n)|^2v_n\right) & ~\mbox{if } ~p=2.
\end{array}
\end{equation}
The difference with (\ref{singvn}) is that here $\gamma_n$ (or $\|A_F(v_n)\|_2$ if $p=2$) needs not be equal to $\sigma_n(A_F)$. With a slight abuse of language, we will continue to say that $v_n$ is a singular vector associated to $g_n$
although $\gamma_n$ may no longer be a singular value.
Note that, as for best meromorphic approximants, $v_n$ reduces to a Blaschke 
product when $p=2$. Thus $v_n$ has an inner-outer factorization of 
type (\ref{eq:innerOuterFactor}) where $b_n$ has exact degree $n$
and $w_n\equiv1$ if $p=2$.

Although their definition is a little technical, critical points are just those
$g_n\in H_n^p$ for which the derivative of $\|F-g_n\|_p$ with respect to 
$b_n\in B_n$ and $h\in H^p$ in factorization (\ref{alternativemerodef})
does vanish \cite{BS02}.
Beyond best approximants, the most important critical points are
{\it local best approximant} \cite[Prop. 9.3.]{BS02}. 
By definition, a local best approximant is some $g_n\in H^p_n$ for which 
there exists $\delta>0$ such that
$$g\in H_n^p ~~~ \mbox{ and } ~~~ \|g-g_n\|_p\leq\delta ~~~ 
\mbox{ imply } ~~~ \|F-g_n\|_p\leq\|F-g\|_p.$$
The reason why we introduce critical points is that all a numerical search
can yield in general is a local best approximant, and we feel it is
important that our results should apply to computable objects.

When $p\in[2,\infty)$, best and local best approximants have exactly $n$ poles,
counting multiplicities, hence they are {\it a fortiori} critical points of 
order $n$ (\cite[Prop. 9.2 and 9.3]{BS02}); such critical points are 
called \emph{irreducible}.
For $p=\infty$ the critical points are just the best meromorphic approximants, which are unique for each fixed $n$.  
So the notion is nothing new, but it may happen that a best approximant 
out of $H_n^\infty$ has less than $n$ poles. However, each time the number of poles of $g_n$ increases with $n$, it jumps to the maximum value $n$, in particular, there exists a subsequence of natural numbers, say $\N_0=\N_0(F)$, such that for 
each $n\in\N_0$ the best approximant $g_n$ has exactly $n$ poles in $\D$, 
{\it i.e.} it is irreducible 
(\cite[p. 114]{BS02}). Since the behavior of the poles of best approximants
from $H_n^\infty$ is entirely characterized by this subsequence, hereafter 
we say ``a sequence of irreducible critical points of order 
$n$'' to mean if $p=\infty$ that we pass to a subsequence if needed. 

The three theorems stated below constitute the main results of the paper. For the definitions of capacity, Green equilibrium distribution, and condenser (Green) capacity, the reader may want to consult the appendix.

\begin{theorem}
\label{thm:polesCriticalPoints}
Let $p\in[2,\infty]$, $p^\prime$ the conjugate exponent modulo 2, and 
$\{g_n\}_{n\in\N}$ be a sequence of irreducible critical points of order $n$ 
of $\pma(p)$ for $F$, where $F$ is given by (\ref{eq:mainFun})-(\ref{defQs}) with 
$\mes\in\CoM$. Then the counting measures of the poles of $g_n$ converge to $\ged$, the Green equilibrium distribution on $\spp$ relative to $\D$, in the weak$^*$ sense.
\end{theorem}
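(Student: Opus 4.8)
The plan is to connect the poles of the critical points to zeros of certain non-Hermitian orthogonal polynomials and then to apply the known machinery for the zero asymptotics of such polynomials. Concretely, from the characterization \eqref{eq:DefCrPt}--\eqref{eq:criticalPoint} of a critical point, one writes $v_n=b_nw_n$ as in \eqref{eq:innerOuterFactor} with $b_n=q_n/\widetilde q_n$, $q_n\in\Mc_n$, and the denominator of $g_n$ is $q_n$. The first step is to derive, from the Hankel equations and the integral representation \eqref{eq:mainFun} of $F$, an orthogonality relation of the form
\begin{equation*}
\int t^k\, q_n(t)\, w_n^2(t)\, Q_s(t)^{?}\, d\mes(t)=0,\qquad k=0,\dots,n-1,
\end{equation*}
possibly with an extra polynomial factor accounting for $R_s$, where the key point is that the varying weight involves $w_n$ (the outer factor of the singular vector) \emph{which itself depends on $q_n$}. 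This is exactly the ``implicitly defined weight'' phenomenon the abstract advertises. I would follow the computation in \cite[Sec. 8]{BS02} to pin down the precise form, keeping track of the reciprocal polynomial $\widetilde q_n$ and the reflections of the poles across $\T$ as interpolation nodes.

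Next I would extract the quantitative control of the varying weight needed to run a potential-theoretic argument. The crucial estimates should come from three sources: (i) the bounded-variation hypothesis on $\arm$, which controls the argument of the weight $d\mes$; (ii) the lower regularity bound (2) in the definition of $\CoM$, i.e. $|\mes|([x-\delta,x+\delta])\ge c\delta^L$, which prevents the weight from being too thin anywhere on $\spp$; and (iii) bounds on the outer factor $w_n$, which are obtained from the normalization $\|w_n\|_{p'}=1$ together with the fact that $|w_n|=|v_n|$ on $\T$ (since $|b_n|=1$ there) and the operator relation $\|F-g_n\|_p=$ the corresponding $\gamma_n$. One must show the $w_n$ are uniformly bounded above and below on compact subsets of $\overline\C\setminus\wspp$ (or at least that their $n$-th roots behave trivially, i.e. $|w_n|^{1/n}\to1$ in capacity), so that they contribute nothing to the limiting potential. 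This uses that $\{g_n\}$ is a sequence of \emph{critical} points with errors bounded by $\sigma_0(A_F)$, hence the $\gamma_n$ stay bounded away from $0$ and $\infty$ along the relevant subsequence.

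With the orthogonality relation and weight estimates in hand, the third step is the standard ``GRS-type'' argument for zero distributions of non-Hermitian orthogonal polynomials with varying weights on an interval: pass to a weak$^*$ limit $\nu$ of a subsequence of the counting measures of $q_n$ (normalized to probability measures); show first that $\supp\nu\subset\spp$ using that the reflections $\widetilde q_n$ of the poles stay off $\spp$ and that the approximation error forces the zeros not to escape; then use the orthogonality together with the lower bound on $|\mes|$ to show $\nu$ has finite logarithmic energy and satisfies the variational (equilibrium) condition
\begin{equation*}
2U^\nu(x)-U^{\ged}(x)=\text{const on }\supp\ged,
\end{equation*}
where the factor $2$ reflects the reflected zeros (the interpolation of order $2$), so that $\nu$ is forced to be the Green equilibrium distribution $\ged$ of $\spp$ relative to $\D$. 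The uniqueness of the Green equilibrium measure then upgrades subsequential convergence to full weak$^*$ convergence.

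The main obstacle, and the novelty the paper stresses, is precisely item (iii): obtaining $n$-independent control of the outer factors $w_n$ \emph{without} already knowing where the zeros of $q_n$ go, since $w_n$ depends on $v_n=b_nw_n$ through the nonlinear Hammerstein equation \eqref{eq:criticalPoint}. One cannot simply cite classical results as in \cite{GRakh87} because the weight is not fixed. I expect the resolution to require a bootstrap: use a priori boundedness of $\{g_n\}$ in $H_n^p$ (from $\|F-g_n\|_p\le\sigma_0(A_F)$) to get crude bounds on $w_n$ on $\T$, feed these into the orthogonality to localize the zeros of $q_n$ near $\spp$, then use this localization to prove $|w_n|^{1/n}\to1$ in capacity away from $\spp$, which in turn justifies treating the weight as ``slowly varying'' in the potential-theoretic limit. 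Handling the rational perturbation $R_s$ (showing it only absorbs $O(1)$ zeros and does not affect the limiting distribution) is a comparatively routine addition once the pole localization is established.
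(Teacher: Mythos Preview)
Your outline correctly identifies the overall architecture (orthogonality $\Rightarrow$ potential theory $\Rightarrow$ equilibrium measure) and correctly flags the control of the outer factors $w_n$ as the crux. However, two concrete points are off, and your proposed resolution of the main obstacle would not close.

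First, the orthogonality relation is not of the form you write. From \eqref{eq:decompHankel} one obtains \eqref{eq:orthRel}:
\[
\int t^k\,q_n(t)\,\frac{w_n(t)}{\widetilde q_n^{\,2}(t)}\,d\wmes(t)=0,\qquad k=0,\dots,n-1,
\]
with $w_n$ to the first power and, crucially, the reciprocal polynomial $\widetilde q_n$ \emph{squared in the denominator}. It is precisely this factor $\widetilde q_n^{-2}$ that forces the limit to be the \emph{Green} equilibrium measure of $\spp$ relative to $\D$ rather than a logarithmic one; the paper's argument shows that $U^{\nu-\sigma}$ is constant q.e.\ on $\spp$, where $\sigma$ is the reflection of the limit measure $\nu$ across $\T$, and then identifies $\nu=\ged$ via condenser theory (Proposition~A of the appendix). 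Your displayed variational condition $2U^\nu-U^{\ged}=\mathrm{const}$ is not what one obtains.

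Second, and more seriously, the control you propose on $w_n$ is too weak for the non-Hermitian setting. The localization device here is the angle-counting Lemma~\ref{lem:anglBound}, which bounds $\sum_j(\pi-\theta(\xi_{j,n}))$ over the zeros of $q_n$ by a quantity involving $V(\arg(w_n),\spp)$. One therefore needs $\arg(w_n)$ to have \emph{uniformly bounded variation} on $\spp$ and $|w_n|$ to be uniformly bounded above and below there; knowing only that $|w_n|^{1/n}\to1$ in capacity says nothing about the argument and does not feed the lemma. Your bootstrap is consequently circular: the first step (localize zeros near $\spp$ via orthogonality) already consumes the argument bound you plan to produce in the second step.

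The paper breaks the circularity by proving directly (Lemma~\ref{lem:normalWeight}, building on \cite[Thm.~10.1]{BS02} and \cite[Prop.~6.3]{BMSW06}) that $\{w_n\}$ is a \emph{normal family in $\D$} whose every limit point is zero-free. On the convex hull $I$ of $\spp$ this yields uniform upper and lower bounds on $|w_n|$ and, since $w_n'/w_n$ is again normal, uniform bounds on $(\arg w_n)'$. With these in hand the hypotheses of the workhorse Theorem~\ref{thm:weakLim} are met and the weak$^*$ convergence follows. Your treatment of $R_s$ (absorb $Q_s$ into the test polynomials, costing $s$ degrees of freedom as in \eqref{eq:newOrthRel}) is correct and is exactly what the paper does.
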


The previous theorem gives one answer to question $(a)$. The next one addresses question $(c)$ by stating that approximants behave rather nicely toward the approximated function, namely they converge {\it in capacity}\footnote{See the appendix for the definition of convergence in capacity.} to $F$ on $\D\setminus\spp$, and in the case $p=2$ uniformly in $\overline\C\setminus\D$. Moreover, $n$-th root estimates for the error are provided.

\begin{theorem}
\label{thm:convergenceCapacity}
Let $F$ and $\{g_n\}_{n\in\N}$ be as in Theorem \ref{thm:polesCriticalPoints}. Then 
\begin{equation}
\label{eq:Convergence1}
|(F-g_n)(z)|^{1/2n} \cic \exp\left\{U_\D^{\ged}(z)-\frac{1}{\cp(\spp,\T)}\right\}
\end{equation}
on compact subsets of $\D\setminus\spp$, where $U_D^{\ged}$ is the Green potential of $\ged$ relative to $\D$ and $\cic$ denotes convergence in capacity. In addition, in the case of rational and AAK approximation (i.e. when $p=2$ and $p=\infty$, respectively) it holds that
\begin{equation}
\label{eq:Convergence2}
|(F-g_n)(z)|^{1/2n} \cic \exp\left\{-\frac{1}{\cp(\spp,\T)}-U_\D^{\ged}(1/\bar z)\right\}
\end{equation}
 on closed subsets of $\overline\C\setminus(\D\cup\spp^*)$. Moreover, for $p=2$, it also holds that
\begin{equation}
\label{eq:Convergence3}
\limsup_{n\to\infty} |(F-g_n)(z)|^{1/2n} \leq \exp\left\{-\frac{1}{\cp(\spp,\T)}-U_\D^{\ged}(1/\bar z)\right\}
\end{equation}
uniformly in $\overline\C\setminus\D$, where $\spp^*$ is the reflection across $\T$ of $\spp$.
\end{theorem}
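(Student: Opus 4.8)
The plan is to reduce everything to the asymptotics of the singular vectors $v_n$ and their denominators, using formula (\ref{eq:operatorBestAppr})–(\ref{eq:DefCrPt}) for the error $F-g_n = A_F(v_n)/v_n$. Writing $v_n = b_n w_n$ with $b_n = q_n/\widetilde q_n$ (the Blaschke factor carrying the poles of $g_n$) and $w_n$ outer of unit $H^{p'}$-norm, the first task is to represent $A_F(v_n)$ as a contour integral. The integral formulas from \cite{BS02} express $\Pj_-(Fv_n)$ in terms of $\mes$ and $Q_s$; combined with the non-Hermitian orthogonality relations satisfied by $q_n$ (the heart of Section \ref{sec:merom}), this yields a formula of the shape
\begin{equation*}
(F-g_n)(z) = \frac{1}{\widetilde q_n^2(z)\, Q_s(z)}\int \frac{q_n^2(t)\,Q_s(t)}{z-t}\, h_n(t)\, d\mes(t)
\end{equation*}
up to controlled factors coming from $w_n$ and the singular inner part, where $h_n$ is a uniformly bounded (and bounded-away-from-zero, in capacity) weight built from $w_n$. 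Taking $2n$-th roots, the dominant contributions are $|q_n|^{1/n}$, $|\widetilde q_n|^{1/n}$ and the $1/2n$-th root of the integral. By Theorem \ref{thm:polesCriticalPoints} the normalized counting measures of the zeros of $q_n$ converge weak$^*$ to $\ged$, so $(1/n)\log|q_n(z)| \to -U^{\ged}(z)$ and $(1/n)\log|\widetilde q_n(z)| \to -U^{\ged}(1/\bar z) - \log|z|$ locally uniformly off the respective limit sets (here $U^{\ged}$ is the logarithmic potential).

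Next I would convert the logarithmic potentials into the Green potential appearing in the statement. The Green equilibrium distribution $\ged$ on $\spp$ relative to $\D$ satisfies $U^{\ged}(z) + U^{\ged^*}(z) $-type balayage identities; more directly, $U_\D^{\ged}(z) = U^{\ged}(z) - U^{\widehat{\ged}}(z)$ where $\widehat{\ged}$ is the balayage of $\ged$ onto $\T$, and $\int U_\D^{\ged}\, d\ged = 1/\cp(\spp,\T)$ by definition of the condenser capacity. The standard $n$-th root / contour-integral asymptotics (a Bernstein–Walsh–Gonchar–Rakhmanov type estimate, cf. the appendix and \cite{GRakh87}) give that the $1/2n$-th root of $\int q_n^2 Q_s h_n\, d\mes/(z-t)$ converges in capacity to $\exp\{\int U_\D^{\ged}\, d\ged\} \cdot (\text{something absorbed})$; assembling the pieces produces exactly $\exp\{U_\D^{\ged}(z) - 1/\cp(\spp,\T)\}$ on compacts of $\D\setminus\spp$, which is (\ref{eq:Convergence1}). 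For (\ref{eq:Convergence2}) and (\ref{eq:Convergence3}) one evaluates the same formula for $z$ outside $\overline\D$: now $1/\widetilde q_n^2(z)$ is the leading term and contributes $U_\D^{\ged}(1/\bar z)$ after the potential-to-Green-potential conversion, while the numerator integral is harmless off $\spp^*$. The sharper assertion (\ref{eq:Convergence3}) in the case $p=2$ — a genuine $\limsup$ uniformly on all of $\overline\C\setminus\D$, not merely in capacity — uses that for $p=2$ the approximant is rational and interpolates $F$ with order two at the $1/\bar\eta$, $\eta$ a pole of $g_n$; the Hermite interpolation error formula gives a clean bound $|F-g_n|(z) \le \|h_n\|_\infty \,|\widetilde q_n(z)|^{-2}\int |q_n(t)|^2 |Q_s(t)|\,d|\mes|(t)/\dist(z,\spp^*)$ with no need to discard exceptional sets, and the upper estimate on $(1/2n)\log$ of the right-hand side follows from the weak$^*$ convergence of the zero counting measures together with the principle of descent for potentials.

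The main obstacle I expect is \emph{not} the potential-theoretic bookkeeping but controlling the self-referential weight $h_n$ built from the outer factor $w_n$: unlike in classical multipoint Padé theory, the varying weight $|w_n|^{p'-2}$ (equivalently $|v_n|^{p'-2}$) in the orthogonality relation depends on $q_n$ itself, so one must show a priori that $\log|w_n|$ is uniformly integrable and that $w_n$ does not degenerate — i.e. that its zero-free outer nature propagates to $n$-th root regularity. This is presumably where the class $\CoM$ hypotheses enter: regularity of $\spp$ gives the requisite upper regularity of $\ged$, and the lower bound $|\mes|([x-\delta,x+\delta])\ge c\delta^L$ prevents the mass of the orthogonality measure from thinning out, so that $\|A_F(v_n)\|_2 = \gamma_n \asymp \sigma_n(A_F)$ decays at the expected geometric rate $\exp\{-n/\cp(\spp,\T)\}$. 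Once $h_n$ is pinned between two constants off a set of small capacity, the three displays follow by taking roots and invoking continuity of potentials; handling the possible singular inner factor of $v_n$ (which could in principle put extra mass on $\T$) is a secondary technical point, dispatched by showing that mass must escape to $\spp$ in the limit.
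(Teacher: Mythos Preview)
Your overall architecture is right — derive an integral formula for $F-g_n$ via $A_F(v_n)/v_n$, exploit the orthogonality (\ref{eq:orthRel}) to insert extra factors of $\widetilde q_n$ and $Q_s$, then split the $2n$-th root into a rational piece governed by the zero counting measures (handled by Theorem \ref{thm:polesCriticalPoints}) and an integral piece $B_n(z)$ — and this is exactly what the paper does. But there is a genuine gap at the step you label ``standard $n$-th root / contour-integral asymptotics (a Bernstein--Walsh--Gonchar--Rakhmanov type estimate)''. The upper bound $\limsup|B_n|^{1/2n}\le e^{-1/\cp(\spp,\T)}$ is indeed routine (principle of descent on $U_\D^{\nu_n}$ plus the analogue of (\ref{eq:LimitNorms})). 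The matching \emph{lower} bound is not: because $\mes$ is complex, nothing prevents massive cancellation in $\int (q_n q_n^* Q_s w_n)(t)\,\widetilde q_n^{-2}(t)\,(z-t)^{-1}\,d\mes(t)$, and the Gonchar--Rakhmanov framework you cite relies on a positive (or at least sign-definite) measure. The paper's actual argument is a contradiction: if $|B_n|^{1/2n}$ were too small on a set of positive capacity, Lemma \ref{lem:TwoConstant} (a two-constant estimate with capacity-uniform control of harmonic measure) would force it too small on a fixed curve $\Gamma$ encircling $\spp$. One then evaluates $J_n=\bigl|\int_\Gamma P(z)B_n(z)\,dz\bigr|$ by Cauchy's formula as a \emph{real} integral over $\spp$, and the auxiliary polynomials $T,T_l$ from Lemma \ref{lem:aux}(c),(d) are inserted precisely so that the resulting integrand has argument in $(-\pi/2+\delta,\pi/2-\delta)$; this is where the bounded-variation hypothesis on $\arm$ and the uniform derivative bound on $\arg w_n$ are actually spent. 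The $\CoM$(2) lower bound $|\mes|([x-\delta,x+\delta])\ge c\delta^L$ then enters, exactly as in (\ref{eq:contrBelow}), to bound $J_n$ from below by $c_1 e^{-2n/\cp(\spp,\T)-nL\delta}$, contradicting the upper bound inherited from $\Gamma$. None of this machinery appears in your sketch, and it does not follow from any off-the-shelf result.

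Two smaller corrections. You locate the ``main obstacle'' in controlling $w_n$, but in the paper that is the easy part: Lemma \ref{lem:normalWeight} shows $\{w_n\}$ is a normal family in $\D$ with zero-free limit points, using only the critical-point equations and the bounded-variation hypothesis — $\CoM$(2) is not needed there, and there is no ``off a set of small capacity'' caveat. And your worry about a ``singular inner factor of $v_n$'' is misplaced: by the structure of singular vectors (see the sentence following (\ref{singvn})), the inner factor of $v_n$ is a finite Blaschke product of degree at most $n$, so there is no singular inner part to dispatch.
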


As a consequence of the previous theorem, we can prove a result on best rational approximation function of the type (\ref{eq:mainFun}) in $\overline\C\setminus\D$ that is classical in scope  (cf. \cite[Thm. 1$^\prime$]{GRakh87}), but new if $\mes$ vanishes on a subset of positive capacity of the convex hull of $\spp$. In what follows $\|\cdot\|_K$ stands for the supremum norm on a set $K$.

\begin{corollary}
\label{cor:BestRational}
Let $F$ be given by (\ref{eq:mainFun})-(\ref{defQs}) with $\mes\in\CoM$ and ${\mathbb E}:=\overline\C\setminus\D$. Then
\begin{equation}
\label{eq:BestRational}
\lim_{n\to\infty} \rho_n(F,{\mathbb E})^{1/2n} = \exp\left\{-\frac{1}{\cp(\supp(\mes),\T)}\right\},
\end{equation}
where
\[
\rho_n(F,{\mathbb E}) := \inf_{r\in\Rc_{n,n}} \|F-r\|_{\mathbb E}.
\]
\end{corollary}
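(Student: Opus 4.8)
The plan is to extract the corollary from Theorem \ref{thm:convergenceCapacity} in the case $p=2$, where best $L^2$ meromorphic approximants are rational, together with a standard comparison of uniform and $L^2$ errors plus a lower bound coming from potential theory. First I would establish the upper estimate $\limsup_n \rho_n(F,{\mathbb E})^{1/2n}\le \exp\{-1/\cp(\spp,\T)\}$. The natural candidate rational functions are the principal parts $\widetilde g_n$ of the best $L^2$ meromorphic approximants $g_n$ of order $n$ to $F$; since $F\in\bar H_0^2$, as recalled after \eqref{alternativemerodef}, these approximants are already rational of type $(n-1,n)$, hence belong to $\Rc_{n,n}$. Then $\rho_n(F,{\mathbb E})\le\|F-g_n\|_{\mathbb E}$, and by \eqref{eq:Convergence3} we get $\limsup_n \|F-g_n\|_{\mathbb E}^{1/2n}\le\sup_{z\in{\mathbb E}}\exp\{-1/\cp(\spp,\T)-U_\D^{\ged}(1/\bar z)\}$. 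Since $U_\D^{\ged}$ is a nonnegative Green potential, the supremum over $z\in\overline\C\setminus\D$ (equivalently $1/\bar z\in\overline\D$) of $\exp\{-U_\D^{\ged}(1/\bar z)\}$ is attained in the limit as $1/\bar z\to\T$, where $U_\D^{\ged}$ vanishes, giving exactly $\exp\{-1/\cp(\spp,\T)\}$. Care is needed because $U_\D^{\ged}$ blows up on $\spp^*$, but $\spp^*\subset\D$ is at positive distance from ${\mathbb E}$, so on ${\mathbb E}$ the potential $U_\D^{\ged}(1/\bar z)$ is bounded and the supremum is as claimed.

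For the matching lower bound $\liminf_n \rho_n(F,{\mathbb E})^{1/2n}\ge\exp\{-1/\cp(\spp,\T)\}$ I would argue by contradiction using a Bernstein–Walsh / Gonchar-type mechanism. Suppose along a subsequence $r_n\in\Rc_{n,n}$ satisfies $\|F-r_n\|_{\mathbb E}^{1/2n}\le e^{-1/\cp(\spp,\T)}-\varepsilon$ for some $\varepsilon>0$. The difference $r_n-r_{n+1}$ is then a rational function of degree $O(n)$, small on ${\mathbb E}$, and since $F$ has a genuine singularity on $\spp$ (it is not rational, as noted after \eqref{defQs}), the sequence $r_n$ cannot converge to $F$ faster than the capacity of the condenser $(\spp,\T)$ allows. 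Concretely, one transfers the problem to $\overline\D$ via $z\mapsto 1/\bar z$, so that $F(1/\bar z)$ is analytic off $\spp^*\subset\D$ and the rational approximants live in the same class; the rate of rational approximation to a function with a singular set $E$ on a compact $K$ is governed by the Green condenser capacity $\cp(E,\partial K)$, and here that quantity is $\cp(\spp,\T)$ by conformal invariance of the Green function of $\D$ under $z\mapsto 1/\bar z$. This is precisely the classical estimate behind \cite[Thm. 1$'$]{GRakh87}; the only point to check is that their hypotheses, which require some regularity/nonvanishing of $\mes$, can be replaced here by $\mes\in\CoM$ — and indeed condition (2) in the definition of $\CoM$ guarantees that $\spp$ is not "thin" where $\mes$ lives, so that $F$ cannot be better approximated than a generic function with singular set $\spp$. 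Alternatively, and more in the spirit of this paper, the lower bound follows from \eqref{eq:Convergence3} applied to the best approximants together with the fact that $\rho_n$ cannot beat the meromorphic approximation error up to a factor that is subexponential in $n$, because a rational function of type $(n,n)$ is a meromorphic function of degree $n$, so $\sigma_n(A_F)=\|F-g_n\|_2\le C\|F-r_n\|_{\mathbb E}$ for a constant $C$ independent of $n$ (comparing the $L^2(\T)$ and sup norms, which differ only by a bounded factor since ${\mathbb E}\supset\T$), and then the $n$-th root asymptotics of $\sigma_n(A_F)$ — which by Theorem \ref{thm:convergenceCapacity} equal $\exp\{-1/\cp(\spp,\T)\}$ — force the lower bound.

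Combining the two bounds yields the existence of the limit in \eqref{eq:BestRational} and its value. I expect the main obstacle to be the lower bound: while the upper bound is an essentially mechanical consequence of \eqref{eq:Convergence3} and the behavior of the Green potential near $\T$, the lower bound requires knowing that $\rho_n(F,{\mathbb E})$ does not decay strictly faster than $\sigma_n(A_F)$, i.e. that dropping from meromorphic approximants of degree $n$ to rational functions of type $(n,n)$ does not help asymptotically. The cleanest route is to observe $\Rc_{n,n}\subset H_n^2$ (a rational function of type $(n,n)$ differs from one in $\Rc_{n-1,n}$ by a constant, which lies in $H^2$), so that $\sigma_n(A_F)\le\rho_n(F,{\mathbb E})$ directly — no norm comparison constant is even needed — after which Theorem \ref{thm:convergenceCapacity}, via $\|F-g_n\|_2=\sigma_n(A_F)$ and \eqref{eq:Convergence3} taken at points approaching $\T$, pins down $\liminf_n \sigma_n(A_F)^{1/2n}=\exp\{-1/\cp(\spp,\T)\}$ and hence the desired lower bound for $\rho_n$.
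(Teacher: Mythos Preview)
Your upper bound argument is correct and matches the paper's: the $p=2$ best meromorphic approximants are rational of type $(n-1,n)\subset\Rc_{n,n}$, and \eqref{eq:Convergence3} together with $U_\D^{\ged}\geq 0$ on $\overline\D$ gives the bound directly.

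The lower bound has a genuine gap. You correctly observe $\Rc_{n,n}\subset H_n^p$, whence $\sigma_n(A_F)\leq\rho_n(F,{\mathbb E})$ after the harmless comparison $\|\cdot\|_p\leq\|\cdot\|_{\mathbb E}$ for functions analytic on ${\mathbb E}$. But to close the argument you need $\liminf_n\sigma_n(A_F)^{1/2n}\geq\exp\{-1/\cp(\spp,\T)\}$, and for this you cite \eqref{eq:Convergence3}. That inequality is a one-sided $\limsup\leq$ bound on the pointwise error; it can only produce an \emph{upper} bound on $\sigma_n(A_F)=\|F-g_n\|_2$, which is the wrong direction. Theorem \ref{thm:convergenceCapacity} as stated does not contain the $n$-th root asymptotics of $\sigma_n(A_F)$, so this step is not justified as written. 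The paper obtains the lower bound via $p=\infty$ instead: the AAK error has constant modulus $|F-g_n|\equiv\sigma_n(A_F)$ a.e.\ on $\T$, so $\sigma_n(A_F)$ can be read off from the explicit formula \eqref{errmanip} at a.e.\ $\xi\in\T$; the in-capacity limits established \emph{inside} the proof of Theorem \ref{thm:convergenceCapacity} (namely \eqref{eq:ConvIn}, $|q_n(\xi)/q_n(\bar\xi)|^{1/n}\cic1$, and Lemma \ref{lem:normalWeight}) then force $\lim_n\sigma_n(A_F)^{1/2n}=\exp\{-1/\cp(\spp,\T)\}$. Your $p=2$ route is salvageable, but not through \eqref{eq:Convergence3}: you would have to invoke the two-sided in-capacity limit \eqref{eq:Convergence2} on a circle $|z|=R>1$ disjoint from $\spp^*$, combine it with the monotonicity of $L^2$-means on circles for $\bar H_0^2$ functions to pass from $|z|=R$ back to $\T$, and then let $R\downarrow1$.
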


To approach question $(b)$, we need to introduce some more notation. For any $\xi\neq0\in\C$, we let $\Arg(\xi)\in(-\pi,\pi]$  be the principal branch of the argument and for $\xi=0$ we set $\Arg(0)=\pi.$ With this definition, $\Arg(\cdot)$ becomes a left continuous function on $\R$. Now, for any interval $[a,b]\subset\R$ we can define the angle in which this interval is seen at $\xi\in\C$ by
$$\ang(\xi,[a,b]):=|\Arg(a-\xi)-\Arg(b-\xi)|.$$
We define additively this angle for a system of disjoint closed intervals: if $\{[a_j,b_j]\}_{j=1}^m$ is such a system, then the angle in which it is seen at $\xi$ is defined by
\begin{equation}
\label{eq:agnleSys}
\angs(\xi):=\sum_{j=1}^m\ang(\xi,[a_j,b_j]).
\end{equation}
Note that $0\leq\angs(\xi)\leq\pi$ and $\angs(\xi)=\pi$ if and only if $\xi\in\cup[a_j,b_j]$. The notation $\theta(\xi)$ does not reflect the dependency on the system of intervals, but the latter will always be made clear. Further, for any point $z\in\C$ define the lower and upper characteristics $\overline m(z),\underline m(z)\in\Z_+$ as
\[\overline m(z) := \inf_U \overline m(z,U), \;\;\; \overline m(z,U) := \lim_{N\to\infty}\max_{n\geq N}\#\{S_n\cap U\},\]
and
\[\underline m(z) := \inf_U \underline m(z,U), \;\;\; \underline m(z,U) := \lim_{N\to\infty}\min_{n\geq N}\#\{S_n\cap U\},\]
respectively, where the infimum is taken over all open sets $U$ containing $z$ and $S_n$ is the set of poles of $g_n$ counting multiplicities. Clearly, $\underline m(z)\leq \overline m(z)$, $\underline m(z)=\infty$ if $z\in\spp$, and $\overline m(z)=0$ if  $z\notin\K$.

The forthcoming theorem implies that each pole of $F$ attracts at least as many poles of meromorphic approximants as its multiplicity and not much more. 
This is one answer to question $(b)$.

\begin{theorem}
\label{thm:tracingPoles}
Let $F$ and $\{g_n\}_{n\in\N}$  be as in Theorem \ref{thm:polesCriticalPoints}. Then
\begin{equation}
\label{eq:LowerM}
\underline m(\eta)\geq m(\eta), \;\;\; \eta\in\rspp,
\end{equation}
and
\begin{equation}
\label{eq:UpperM}
\sum_{\eta\in\rspp\setminus\spp}(\overline m(\eta)-m(\eta))(\pi-\angs(\eta)) \leq V,
\end{equation}
where
\begin{equation}
\label{eq:V}
V := V(\arm) + V_\W+(m+2s^\prime-1)\pi+2\sum_{\eta\in\rspp\setminus\spp}m(\eta)\angs(\eta),
\end{equation}
$s^\prime$ is the number of poles of $R$ on $\spp$ counting  multiplicities,
\begin{equation}
\label{eq:boundArgW}
V_\W:=\sup_{n\in\N}V(\Arg(w_n),S),
\end{equation}
and $\angs(\cdot)$ is the angle function for a system of $m$ intervals covering $\spp$.  
\end{theorem}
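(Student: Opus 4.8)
The plan is to exploit the orthogonality relations satisfied by the denominators of the critical points $g_n$, which are the engine behind all three main theorems. Write $g_n = p_{n-1}/q_n$ on its polar part, with $q_n\in\Mc_n$; the factorization $v_n = b_n w_n$ with $b_n = q_n/\widetilde q_n$ (up to unimodular constant) ties the poles of $g_n$ to the zeros of $q_n$. From (\ref{eq:DefCrPt}) and the integral representation (\ref{eq:mainFun}) one obtains, as in \cite{BS02}, a non-Hermitian orthogonality relation of the form
\[
\int t^k q_n(t)\,\frac{\widetilde q_n(t)}{w_n^2(t)}\,\frac{Q_s(t)\,\text{(something holomorphic near }\spp)}{\cdots}\,d\mes(t) = 0,\qquad 0\le k\le n-1-s',
\]
i.e. $q_nQ_s$ (or a suitable cofactor) is orthogonal to $\Pc_{n-1-s'}$ against a varying complex weight built from $d\mes$, the reciprocal polynomial $\widetilde q_n$, and the outer factor $w_n^{-2}$. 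The key structural point, already isolated in Section \ref{sec:merom}, is that the argument of this weight has bounded variation on $\spp$: its total variation is controlled by $V(\arm,\spp)$, by $V_\W = \sup_n V(\Arg(w_n),S)$ from (\ref{eq:boundArgW}), by the variation of $\Arg$ of the Blaschke-type factors (each contributing at most $\pi$, accounting for the $(m+2s'-1)\pi$ term and the $2\sum m(\eta)\angs(\eta)$ term coming from the poles $\eta\in\rspp\setminus\spp$ of $R$), and possibly by the variation $V_\W$ of an auxiliary outer weight — this is exactly the bookkeeping that produces the constant $V$ in (\ref{eq:V}).

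Next I would invoke the general principle for zeros of polynomials satisfying non-Hermitian orthogonality against a complex weight whose argument has bounded variation $V$: at most $V/(\pi-\angs(\eta))$ zeros can accumulate at a point $\eta$ lying \emph{off} the system of intervals carrying the weight, where $\angs(\eta)<\pi$ is the angle under which the intervals are seen from $\eta$. Heuristically, a zero sitting near $\eta$ forces the weight to have a net argument increment close to $\pi-\angs(\eta)$ along $\spp$ (because $q_n(t)$ rotates by roughly that much relative to the "view" from $\eta$), and only $V/(\pi-\angs(\eta))$ such increments fit inside a weight of total argument-variation $V$; more zeros would make the integral of the weight against $t^kq_n/Q_s$ nonvanishing by a sign/argument obstruction. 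Applying this with the variation bound from the previous paragraph to the poles $\eta\in\rspp\setminus\spp$, and summing over such $\eta$, yields precisely (\ref{eq:UpperM}). The lower bound (\ref{eq:LowerM}) goes the other way and is softer: since $F$ genuinely has a pole of order $m(\eta)$ at each $\eta\in\rspp$ while $g_n\in H_n^p$ must match $F$ to leading order there (the error $F-g_n = -A_F(v_n)/v_n$ cannot blow up faster than $g_n$ near $\eta$), each $g_n$ must carry at least $m(\eta)$ poles in every neighborhood of $\eta$ for $n$ large; one makes this quantitative via the interpolation/orthogonality conditions and the known fact (Theorem \ref{thm:convergenceCapacity}) that $F-g_n\to0$ in capacity away from $\wspp$, which pins the remaining mass of the counting measure on $\spp$.

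The main obstacle is the control of $V_\W$ and, more generally, the fact that the varying weight depends on $q_n$ and $w_n$ themselves — the difficulty emphasized in the abstract and introduction. One must show that $\sup_n V(\Arg(w_n),S)$ is finite (so that $V$ in (\ref{eq:V}) is a genuine finite constant), which requires a uniform estimate on the outer factors $w_n$ of the singular vectors; this is where the hypotheses defining $\CoM$ — regularity of $\spp$ and the lower bound $|\mes|([x-\delta,x+\delta])\ge c\delta^L$ — together with the $n$-th root asymptotics of Theorem \ref{thm:convergenceCapacity} enter, to prevent $w_n$ from oscillating wildly on $\spp$. Granting that bound (which I expect is established in the supporting lemmas of Section \ref{sec:proofs}), the rest is the argument-variation counting for zeros of non-Hermitian orthogonal polynomials, carried out carefully enough to track every $\pi$-contribution to $V$; the angle function $\angs(\cdot)$ for a covering system of $m$ intervals appears because one localizes $\spp$ inside finitely many intervals to make the "view angle" well-defined and strictly less than $\pi$ at each $\eta\notin\spp$.
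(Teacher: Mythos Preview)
Your overall strategy coincides with the paper's: the lower bound (\ref{eq:LowerM}) is obtained from convergence in capacity (Theorem~\ref{thm:convergenceCapacity}) together with a pole-matching argument, and the upper bound (\ref{eq:UpperM}) is obtained by feeding the orthogonality relations into an argument-variation inequality (Lemma~\ref{lem:anglBound}, which rests on Lemma~\ref{lem:aux}(a)--(b)) and then bookkeeping the constants to arrive at $V$.

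A few corrections worth making before you write it out. First, the varying weight in the orthogonality relation is $w_n/\widetilde q_n^{\,2}$, not $\widetilde q_n/w_n^2$; getting this right matters because the reciprocal zeros $1/\bar\xi_{j,n}$ then enter the argument computation via $g(\xi,t)=\Arg(t-\xi)-2\Arg(t-1/\bar\xi)$, whose variation is bounded by $\angs(\xi)$ (Lemma~\ref{lem:aux}(b)). Second, your heuristic for the upper bound is slightly inverted: it is not that a zero near $\eta$ \emph{forces} an argument increment of $\pi-\angs(\eta)$, but rather that such a zero can contribute at most $\angs(\eta)$ to the total variation of the integrand's argument, while orthogonality to $\Pc_{n-s-1}$ forces that total variation to be at least $(n-s-m+1)\pi$; subtracting gives $\sum_j(\pi-\angs(\xi_{j,n}))\le V(\varphi)+V_\W+\sum_\eta m(\eta)\angs(\eta)+(m+s-1)\pi$, and the specific form of $V$ in (\ref{eq:V}) drops out after splitting $s=s'+\sum_{\eta\notin\spp}m(\eta)$ and using $\angs(\eta)=\pi$ for $\eta\in\spp$. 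Third, for (\ref{eq:LowerM}) the clean tool is Gonchar's lemma: a sequence of meromorphic functions with at most $m(\eta)$ poles in a neighborhood $U$, converging in capacity to a function with a pole of exact order $m(\eta)$, must eventually have exactly $m(\eta)$ poles there; your ``cannot blow up faster'' phrasing is not quite the mechanism. Finally, the finiteness of $V_\W$ does not come from Theorem~\ref{thm:convergenceCapacity} but from the normality of $\{w_n\}$ in $\D$ with zero-free limit points (Lemma~\ref{lem:normalWeight}), which uses only the bounded-variation hypothesis on $\varphi$; this is established in the course of proving Theorem~\ref{thm:polesCriticalPoints}, and the $\CoM$ density condition is not needed for this particular step.
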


We shall prove in due course that indeed $V_\W<+\infty$.

Before we proceed, we shall derive several integral representations that are necessary for the proofs of the above-stated theorems. To alleviate notations, it is convenient to formally rewrite the
right-hand side of (\ref{eq:mainFun}) as a single Cauchy integral. 
For this, we introduce for $\eta=x_\eta+iy_\eta\in\C$ the distribution 
$\Phi_\eta=\chi(x-x_\eta)\otimes\delta(y-y_\eta)$, where
$\delta$ is the Dirac delta at $0$ and $\chi$ the characteristic function
of the non-negative semi-axis. For each $k\in\Z_+$ 
(the set of nonnegative integers), the partial derivative 
$\partial^{k+1}_x\Phi_\eta$
is an analytic functional (although $\Phi_\eta$ itself is not), acting on 
any function $h$ holomorphic in a neighborhood of $\eta$ according to the rule
$$\left<\partial^{k+1}_x\Phi_\eta,h\right>=h^{(k)}(\eta),$$
where $h^{(k)}$ indicates the $k$-th derivative. Therefore, if we define
$\Delta_\eta^{(k)}$ to be $\partial^{k}_x\Phi_\eta/k!$, we can formally
write
$$\int \frac{d\Delta_\eta^{(k)}(t)}{z-t}=\frac{1}{(z-\eta)^{k+1}},$$ 
and on rewriting $R_s(z)$ as
$$R_s(z)=\sum_{\eta\in\rspp}\sum_{k=0}^{m(\eta)-1}\frac{r_{\eta,k}}{(z-\eta)^{k+1}}, \;\;\; r_{\eta,k}\in\C,$$
we get 
$$R_s(z)=\int\frac{d\rmes(\xi)}{z-\xi},$$
where $\rmes$ is given by
\begin{equation}
\label{eq:deltaMeasure}
\rmes:=\sum_{\eta\in\rspp}\sum_{k=0}^{m(\eta)-1}r_{\eta,k}\Delta^{(k)}_{\eta}, \;\;\; \supp\left(\rmes\right)=\rspp.
\end{equation}
This way $F$ can be put in the form
$$F(z)=\int\frac{d\wmes(\xi)}{z-\xi}$$
with
\begin{equation}
\label{eq:newMes}
\wmes:=\mes+\rmes, \;\;\; ~~\supp\wmes=\wspp=\spp\cup\rspp,
\end{equation} 
which makes for a convenient notation.

Now, let $\{g_n\}$ be a sequence of irreducible critical points in $\pma(p)$ for some $p\in[2,\infty]$ (cf. (\ref{eq:criticalPoint})). Then (\cite[Prop. 9.1]{BS02})
\begin{equation}
\label{eq:decompHankel}
\begin{array}{ll}
A_F(v_n)(\xi)=\gamma_n^{1/2}\overline{\xi}\overline{\left(b_{n}j_nw_n^{p^\prime/2}\right)(\xi)}= \gamma_n^{1/2}\left(b_{n}j_nw_n^{p^\prime/2}\right)^\sigma(\xi), & p>2 \\
A_F(v_n)(\xi)=\overline{\xi}~\overline{(b_{n}u_n)(\xi)}=\left(b_{n}u_n\right)^\sigma(\xi), & p=2, 
\end{array}
\end{equation}
for a.e. $\xi\in\T$, where $j_n$ is some inner function, $u_n\in H^2$, and $h^\sigma(z):=z^{-1}\overline{h(1/\overline z)}$. Note that if $h\in H^2$ then $h^\sigma\in \bar H^2_0$ and vice versa. We remark that for the case $p=2$ equation (\ref{eq:decompHankel}) is an interpolation condition saying that $g_n$  
interpolates $F$ with order 2 at the reflection of its poles. 
The same can be said when $p\in(2,\infty]$, provided $g_n$ is analytic at the 
reflection of its poles, but this is no longer automatic because $g_n$ 
may no longer be rational.

As usual, we denote by $v_n$ an associated singular vector to $g_n$. According to (\ref{alternativemerodef}), 
each $g_n$ can be decomposed as $$g_n=b_n^{-1}\cdot h_n,$$ where $h_n\in H^p$. Moreover, we can write $b_n$ as ${q_n}/\widetilde q_n$, where $q_n\in\Mc_n$ and $\widetilde q_n$ is the reciprocal polynomial of $q_n$. Arguing like in \cite[Sec. 10]{BS02} (where $R_s$ is not present), equation (\ref{eq:decompHankel}) implies easily the following orthogonality relations 
\begin{equation}
\label{eq:orthRel}
\int t^kq_n(t)\frac{w_n(t)}{\widetilde{q}_n^2(t)}d\wmes(t)=0, ~~~ k=0,\ldots,n-1,
\end{equation}
where $w_n$ is the outer factor of $v_n$ and $\widetilde\mes$ is given by (\ref{eq:newMes}). Upon rewriting (\ref{eq:orthRel}) as
$$\int P_{n-1}(t)q_n(t)\frac{w_n(t)}{\widetilde{q}_n^2(t)}d\mes(t)+ \sum_{\eta\in\rspp}\sum_{k=0}^{m(\eta)-1}\frac{r_{\eta,k}}{k!} \left.\left(P_{n-1}(t)q_n(t)\frac{w_n(t)}{\widetilde{q}_n^2(t)}\right)^{(k)}\right|_{t=\eta}=0,$$
for all $P_{n-1}\in\Pc_{n-1}$ and taking $P_{n-1}$ to be a multiple of $Q_s$, 
these relations yield for $n>s$
\begin{equation}
\label{eq:newOrthRel}
\int t^kQ_s(t)q_n(t)\frac{w_n(t)}{\widetilde{q}_n^2(t)}d\mes(t)=0, ~~~ k=0,\ldots,n-s-1,
\end{equation}
where $Q_s$ was defined in (\ref{defQs}).

The following theorem is a result on the zero distribution of polynomials satisfying certain 
nonlinear orthogonality relations. As apparent from (\ref{eq:newOrthRel}), it will be the working tool of
our approach to the asymptotic behavior of poles of irreducible
critical points, in particular of poles of best or local best approximants.
We continue to denote by $I$ the convex hull of $\spp=\supp(\mes)$.
\begin{theorem}
\label{thm:weakLim}
Let $\{q_n\}_{n\in\N}$ be a sequence of polynomials of exact degree $n$ with all zeros in $\D$ satisfying the orthogonality relations
\begin{equation}
\label{eq:mainOrthRel}
\int t^kq_n(t)\frac{\omega_n(t)}{\widetilde{q}_n^2(t)}d\wmes(t)=0, \;\;\; k=0,\dots,n-1,
\end{equation}
where $\wmes=\mes+\rmes$ is given by (\ref{eq:mesDecomp}) and (\ref{eq:deltaMeasure})
with $S\subset(-1,1)$ and $S^\prime\subset\D$, while 
$\W=\{\omega_n\}_{n=1}^\infty$ is a family of complex measurable functions on 
the union of $\rspp$ and $I$, whose moduli are uniformly bounded 
above and below by positive constants, and whose arguments are 
smooth with uniformly bounded derivatives on $I$. 
Suppose further that $\mes\in\CoM$. 
Then the counting measures of the zeros of $q_n(z)=\prod_{j=1}^n(z-\xi_{j,n})$, 
namely $\nu_n:=(1/n)\sum_{j=1}^n\delta_{\xi_{j,n}}$, converge 
in the weak$^*$ sense to $\ged$.
\end{theorem}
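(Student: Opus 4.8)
The plan is to show that \emph{every} weak$^*$ accumulation point of $\{\nu_n\}$ equals $\ged$; since $\{\nu_n\}$ is a sequence of probability measures on the compact set $\overline\D$, it is weak$^*$ sequentially compact, so this gives the theorem. Pass to a subsequence (still written $\nu_n$) with $\nu_n\cws\nu$. First I would dispose of $\omega_n$: its modulus lies between two positive constants, so $\tfrac1n\log|\omega_n|\to 0$ uniformly on $\spp$, and its argument has uniformly bounded variation on $I$; thus $\omega_n$ is inert for $n$th--root asymptotics and contributes only an $O(1)$ amount to the phase counts below. Second, choosing in (\ref{eq:mainOrthRel}) the test polynomials to be divisible by $Q(z):=\prod_{\eta\in\rspp}(z-\eta)^{m(\eta)}$, exactly as in the passage from (\ref{eq:orthRel}) to (\ref{eq:newOrthRel}), one kills the distributional part $\rmes$ and is left with honest non--Hermitian orthogonality of $P_n:=Qq_n$ against $\mes$ alone,
\[
\int t^k P_n(t)\,\frac{\omega_n(t)}{\widetilde q_n^2(t)}\,d\mes(t)=0,\qquad k=0,1,\dots,n-\deg Q-1 ;
\]
since $\deg Q$ is fixed, the zero counting measures of the $P_n$ also converge weak$^*$ to $\nu$.

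The mechanism that produces $\ged$ rather than the equilibrium measure of $I$ is the self--consistency of the weight (compare \cite{BStW01}), and is seen from the identity
\[
\log\frac{1}{|\widetilde q_n(t)|^{2}}=-2\Big(n\,U_\D^{\nu_n}(t)+\log|q_n(t)|\Big),
\]
obtained by summing $\log|1-\bar\xi z|=g_\D(z,\xi)+\log|z-\xi|$ over the zeros $\xi$ of $q_n$, $g_\D$ being the Green function of $\D$. Thus the external field governing the zeros of $P_n$ in the reduced orthogonality is, to leading order, $U_\D^{\nu}-U^{\nu}$, and in the associated weighted equilibrium problem the $U^{\nu}$--terms cancel, so its Euler--Lagrange relation reads $U_\D^{\nu}(z)\equiv F$ q.e. on $\spp$ for the limiting normalization constant $F$ — precisely the condition characterizing $\ged$. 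To make this rigorous I would introduce the function of the second kind $R_n(z):=\int\frac{P_n(t)\omega_n(t)}{\widetilde q_n^2(t)}\,\frac{d\mes(t)}{z-t}$, which is holomorphic in $\overline\C\setminus\spp$ and, by the orthogonality, vanishes to high order at $\infty$; applying the principle of descent, the lower envelope theorem and the maximum principle on $\overline\C\setminus\spp$ to $\tfrac1n\log|P_nR_n|$ yields the upper estimate $U_\D^{\nu}\le F$ on $\supp(\nu)$.

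The complementary facts — that $\supp(\nu)\subseteq\spp$ and that $F=1/\cp(\spp,\T)$ — are the technical core, and here the hypotheses are consumed. One must bound the number of zeros of $q_n$ lying outside a prescribed neighbourhood of $\spp$ by a quantity that is $o(n)$ (in fact $O(1)$, which is the sharper statement underlying Theorem \ref{thm:tracingPoles}, but $o(n)$ suffices here). The delicate point is that $\mes$ is a genuine complex measure, so the argument of the weight $\omega_n\,d\mes/\widetilde q_n^2$ on $\spp$ must be split into its slowly varying part $-2\Arg\widetilde q_n$, which is the boundary value of a harmonic conjugate and, via the identity above, merely deforms the limiting zero distribution from the arcsine measure of $I$ into $\ged$ without creating spurious zeros, and the genuinely oscillating part $\arm+\Arg\omega_n$, whose total variation is $V(\arm)+O(1)$ uniformly in $n$; condition (3) in the definition of $\CoM$ is exactly what then bounds the ``free'' zeros of $q_n$ by a constant. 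Finally the lower regularity of $|\mes|$, condition (2), feeds a Remez/Nikolskii comparison between the sup-- and $L^2(|\mes|)$--norms over $\spp$ which, combined with the orthogonality normalization, pins $F$ down to $1/\cp(\spp,\T)$ and furnishes the matching lower bound $U_\D^{\nu}\ge F$ q.e. on $\spp$. With $\nu$ a probability measure carried by $\spp$, $U_\D^{\nu}\le F$ on $\supp(\nu)$ and $U_\D^{\nu}\ge F$ q.e. on $\spp$, the Frostman--type characterization of $\ged$ (see the Appendix) forces $\nu=\ged$; as the accumulation point was arbitrary, $\nu_n\cws\ged$.

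The main obstacle is exactly the self--consistency flagged in the introduction: the external field $U_\D^{\nu_n}$ that dictates where the zeros of $q_n$ settle is itself built from those zeros, so that neither the classical counting of spurious zeros for non--Hermitian orthogonal polynomials nor the comparison with a \emph{fixed} weighted equilibrium problem is available off the shelf. I expect the bulk of the work to lie in making the separation of the weight's phase into a ``harmonic'' part and a finite--variation part quantitative and uniform in $n$ — in particular in showing that $V_\W=\sup_n V(\Arg(w_n),\spp)$ is finite, as promised after the statement of Theorem \ref{thm:tracingPoles} — and in verifying that the limiting relation $U_\D^{\nu}\equiv 1/\cp(\spp,\T)$ on $\spp$ is genuinely attained rather than merely approached.
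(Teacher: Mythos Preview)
Your opening moves match the paper's: pass to a weak$^*$ convergent subsequence, kill $\rmes$ by inserting $Q_s$ into the test polynomials, and note that $\supp(\nu)\subset\spp$ follows from a uniform bound on the number of zeros of $q_n$ outside any neighbourhood of $\spp$. After that the routes diverge.

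The paper does \emph{not} set up a weighted equilibrium problem or a function of the second kind. Instead it argues by contradiction: if $U^{\nu-\sigma}$ (equivalently $U_\D^\nu$) were not constant q.e.\ on $\spp$, one can locate a point $y_0\in\supp(\nu)$ near which $|b_n|:=|q_n/\widetilde q_n|$ is anomalously small, and a nonpolar set $E_-\subset\spp$ on which $|b_n|$ is anomalously large. The two-constant theorem then propagates the largeness to a shrinking interval $I_n$ around a maximizing point $x_n\in\spp$. One now \emph{constructs} a specific test polynomial $P_n^*$ for the orthogonality relation, built from three pieces: a fixed polynomial $T$ (Lemma~\ref{lem:aux}(c)) that tames $\arm+\Arg(\omega_n)+\Arg(Q_s)$; a polynomial $T_l$ of bounded degree (Lemma~\ref{lem:aux}(d)) that approximates the unit vector $\widetilde q_n/|\widetilde q_n|$ on $I$; and a factor $\overline{q_n(\bar z)}/\prod(z-\overline\beta_{j,n})$ that makes $|q_n|^2$ appear in the integrand while removing a bounded number of near-$y_0$ zeros. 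With this choice the integrand over $\spp\setminus[y_0-2\rho,y_0+2\rho]$ has argument in $(-\pi/2+\delta,\pi/2-\delta)$, hence positive real part, and condition~(2) of $\CoM$ gives a lower bound on $|\mes|(I_n)$ that beats the upper bound over $[y_0-2\rho,y_0+2\rho]$ --- contradicting orthogonality.

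Your sketch has a genuine gap at exactly the point you flag as ``technical core''. For a complex measure the upper estimate on $\tfrac1n\log|P_nR_n|$ is straightforward, but the matching lower bound is not obtainable from a Remez/Nikolskii inequality alone: cancellation in $\int P_n(t)\omega_n(t)\widetilde q_n^{-2}(t)\,d\mes(t)$ can make $R_n$ arbitrarily small compared to what the $n$th-root heuristic predicts. The paper's entire machinery --- the angle bound of Lemma~\ref{lem:anglBound}, the phase-flattening polynomials $T$ and $T_l$, and the specific choice of $P_n^*$ --- exists precisely to manufacture an integrand with nonnegative real part and thereby defeat cancellation. Your proposed phase decomposition is also not right as stated: $-2\Arg\widetilde q_n$ on $\spp$ is a sum of $2n$ terms and is \emph{not} slowly varying; what makes it tractable is Lemma~\ref{lem:aux}(d), which shows it can be uniformly approximated by a polynomial of \emph{fixed} degree once Lemma~\ref{lem:anglBound} has confined most zeros near $\spp$. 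And you have conflated the abstract $\omega_n$ of this theorem (whose arguments have bounded variation \emph{by hypothesis}) with the outer factors $w_n$ of Theorem~\ref{thm:polesCriticalPoints}; there is nothing to prove about $V_\W$ here.
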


The above-stated theorem is a generalization of Theorem 5.1 and 
Corollary 6.2 in \cite{BKT05}. The main difference here is that we add a 
distribution of the form (\ref{eq:deltaMeasure}) to the measure  
(\ref{eq:mainOrthRel}), i.e. $F$ may have polar singularities inside of the unit 
disk. As compared to the above reference, we simplified the proof somewhat
by using the two-constant theorem instead of weighted potential theory. 
To the author's knowledge these are the first published results about the zero 
distribution of polynomials satisfying nonlinear orthogonality equations like
(\ref{eq:mainOrthRel}) that are typical of rational or meromorphic approximation with free poles. In the Ph.D. thesis of R. K\"ustner \cite{thKus}, an analog of \cite[Theorem 5.1]{BKT05} is given when the measure $\mes$, instead of belonging to $\CoM$, has an argument of bounded variation and satisfies the 
so-called $\Lambda$-criterion introduced in \cite[Sec. 4.2]{StahlTotik}:
$$\cp\left(\left\{t\in\spp: \; \limsup_{r\to0}\frac{\Log(1/\mu[t-r,t+r])}{\Log(1/r)}<+\infty\right\}\right)=\cp(\spp).$$
Paralleling the arguments in \cite{thKus}, all the results in this section 
could be obtained under this weaker assumption, but the exposition 
would be heavier and we leave it to the interested reader to carry out the details.

\section{Proofs}
\label{sec:proofs}

The proof of Theorem \ref{thm:weakLim} relies on several auxiliary lemmas.

\begin{lemma}
\label{lem:aux}
With the previous notation the following statements hold true
\begin{itemize}
        \item [(a)] Let $\nu$ be a positive measure which has infinitely many points in its support and assume the latter is covered by finitely many disjoint intervals:
$\supp(\nu)\subseteq\cup_{j=1}^m[a_j,b_j]$. Let further $\psi$ be a function of 
bounded variation on $\supp(\nu).$ If for some integer $l$  we have
$$\int P_{l-1}(t)e^{i\psi(t)}d\nu(t)=0,~~~~~~\forall \,P_{l-1}\in\Pc_{l-1},$$
then
$$\sum_{j=1}^mV(\psi,[a_j,b_j])\geq(l-m+1)\pi.$$
        \item [(b)] Let $[a,b]\subset(-1,1)$ and $\xi\in\D.$ Define
\begin{equation}
\label{eq:defg}
g(\xi,t):=\Arg(t-\xi)-2\Arg(t-1/\bar\xi),
\end{equation}
where the term $2\Arg(t-1/\bar\xi)$ is omitted if $\xi=0$. Then
\begin{equation}
\label{eq:variationByAngle}
V(g(\xi,\cdot),[a,b])\leq\ang(\xi,[a,b]).
\end{equation}
        \item [(c)] Let $\psi$ be a real function of bounded variation on an
interval $[a,b]$, $\{a_n(x)\}$ a sequence of continuously differentiable
real functions with uniformly bounded derivatives on $[a,b]$, and $Q$ 
a polynomial. Then there exists a polynomial $T\neq0$ and a 
constant $\beta\in(0,\pi/32)$ such that
\begin{equation}
\label{eq:polT}
\left|\Arg\left(e^{i(\psi(x)+a_n(x))}Q(x)T(x)\right)\right|\leq\pi/2-2\beta
\end{equation}
for all $x\in[a,b]$ such that $T(x)Q(x)\neq0$ and all $n$ from some infinite sequence $\N_1\subset\N$.
        \item [(d)] Assume $I\subset(0,1)$ and $\{q_n\}$ is a sequence of 
polynomials of degree $m_n$ whose roots $\{\xi_{1,n},\ldots,\xi_{m_n,n}\}$ lie
in $\D$ and satisfy 
\[\sum_{j=1}^{m_n}(\pi-\ang(\xi_{j,n},I))\leq C\]
where the constant $C$ is independent of $n$. Then, to every $\epsilon>0$ there 
exists an integer $l$ such that, for all $n$ large enough, there is a 
polynomial $T_{l,n}$ of degree at most $l$ satisfying:
$$\left|\frac{\widetilde q_n(x)}{|\widetilde q_n(x)|}-T_{l,n}(x)\right|<\epsilon, \;\;\; x\in I.$$
In particular, the argument of $T_{l,n}(x)/\widetilde q_n(x)$ lies in the interval $(-2\epsilon,2\epsilon)$ when $n$ is large enough.
\end{itemize}
\end{lemma}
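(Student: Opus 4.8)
The statement to prove is Lemma~\ref{lem:aux}, which bundles four distinct technical facts. I will sketch a plan for each part in turn, since they are essentially independent.

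\textbf{Part (a).} The plan is to argue by contradiction through a sign-change / winding argument. If $\sum_j V(\psi,[a_j,b_j]) < (l-m+1)\pi$, then on each interval $[a_j,b_j]$ the function $\psi$ oscillates by less than some amount, and summing, the total "angular budget" is too small. The key idea is that the orthogonality $\int P_{l-1} e^{i\psi}\,d\nu = 0$ for all $P_{l-1}\in\Pc_{l-1}$ forces $e^{i\psi}$, viewed along $\supp(\nu)$, to change "effective sign" sufficiently often; more precisely, one chooses a real polynomial $P_{l-1}$ whose sign pattern on the intervals is adapted to the level sets of $\psi$ so that $\re\big(e^{-i c}\int P_{l-1}e^{i\psi}d\nu\big) > 0$ for a suitable constant $c$, contradicting orthogonality. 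Partitioning each $[a_j,b_j]$ into subintervals on which $\psi$ varies by less than $\pi$ costs at most $\lceil V(\psi,[a_j,b_j])/\pi\rceil$ pieces per interval; a real polynomial alternating sign across consecutive pieces has degree one less than the number of sign changes. The bookkeeping $(l-m+1)\pi$ comes precisely from: total pieces $\le \sum_j V(\psi,[a_j,b_j])/\pi + m$, and we need total pieces $> l$ to build a degree-$(l-1)$ polynomial doing the job. This is the part that needs the most care.

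\textbf{Part (b).} This is a direct geometric computation. Fix $\xi\in\D$. As $t$ traverses $[a,b]\subset(-1,1)$, the vector $t-\xi$ sweeps out an angle exactly $\ang(\xi,[a,b])$, and its argument is monotone in $t$ (since $\xi\notin\R$ or the interval degenerates trivially), so $V(\Arg(t-\xi),[a,b]) = \ang(\xi,[a,b])$. Similarly $\Arg(t-1/\bar\xi)$ is monotone in $t$ with total variation $\ang(1/\bar\xi,[a,b])$. The point is that $1/\bar\xi$ lies \emph{outside} $\overline\D$ on the same ray from $0$ as $\xi$, hence on the same side of the real line; this forces the two arguments $\Arg(t-\xi)$ and $2\Arg(t-1/\bar\xi)$ to be monotone in the \emph{same} direction, so in the difference $g(\xi,t)=\Arg(t-\xi)-2\Arg(t-1/\bar\xi)$ there is cancellation. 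I would show $V(g(\xi,\cdot),[a,b]) \le \max\{\ang(\xi,[a,b]),\, 2\ang(1/\bar\xi,[a,b])\} - \min\{\ldots\}$, and then use $\ang(1/\bar\xi,[a,b]) \le \tfrac12\ang(\xi,[a,b])$ — a standard inequality for the angle subtended from a reflected point, which holds since $[a,b]$ is separated from $\T$ — to conclude $V(g(\xi,\cdot),[a,b])\le\ang(\xi,[a,b])$. Some care is needed to handle the branch cut of $\Arg$, but since $[a,b]\subset(-1,1)$ and $\xi\in\D$ one can check $t-\xi$ and $t-1/\bar\xi$ stay away from the negative real axis, or patch finitely many cut-crossings into the variation count.

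\textbf{Parts (c) and (d).} For (c), the strategy is a normal-families / Mergelyan-type argument: $e^{i\psi(x)}$ has bounded variation hence can be uniformly approximated (off a small exceptional set, or after mollification) by $e^{i\tilde\psi}$ with $\tilde\psi$ Lipschitz; the factors $e^{ia_n(x)}$ have uniformly bounded derivative, so $\{a_n\}$ has a locally uniformly convergent subsequence with limit $a$ having bounded derivative. One then needs a single polynomial $T$ such that $\Arg(e^{i(\psi+a)}QT)$ stays in $(-\pi/2+2\beta,\pi/2-2\beta)$ on $[a,b]$; equivalently $\re\big(e^{i(\psi+a)}QT\big)$ is bounded below by a positive multiple of the modulus. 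Since $\psi+a+\Arg Q$ is (up to the finitely many zeros of $Q$ and jumps of $\psi$) a bounded-variation function taking values in a set of total oscillation that can be covered, one picks $T$ piecewise sign-adapted as in part (a) to "unwind" the argument to within $\pi/2-2\beta$; extracting $\N_1$ absorbs the passage to the convergent subsequence of $\{a_n\}$. For (d), the hypothesis $\sum_j(\pi-\ang(\xi_{j,n},I)) \le C$ says most roots of $q_n$ see $I$ under an angle close to $\pi$, i.e. are close to $I$; the reciprocal polynomial $\widetilde q_n$ has roots at $1/\bar\xi_{j,n}$, reflections across $\T$, which are bounded away from $I$. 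Writing $\widetilde q_n(x)/|\widetilde q_n(x)| = \exp\big(i\sum_j \Arg((x-1/\bar\xi_{j,n})/|x-1/\bar\xi_{j,n}|)\big)$ and using part (b) together with $\ang(1/\bar\xi_{j,n},I)\le\tfrac12(\pi-\ang(\xi_{j,n},I))$ (valid since $1/\bar\xi_{j,n}\notin\overline\D \supset$ a neighborhood of $I$), one bounds the total variation of $\Arg\widetilde q_n$ on $I$ by $\sum_j\tfrac12(\pi-\ang(\xi_{j,n},I))\le C/2$; hence $\widetilde q_n/|\widetilde q_n|$ ranges over an arc of bounded length, and such a unit-modulus bounded-variation function on the interval $I$ is uniformly approximable by polynomials $T_{l,n}$ of degree $l = l(\epsilon)$ independent of $n$ (Jackson/Bernstein estimates, the degree controlled only by the variation bound $C$ and $\epsilon$, not by $n$). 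The claim about $\Arg(T_{l,n}/\widetilde q_n)$ then follows from $|z-w|<\epsilon$ with $|w|=1$ implying $|\Arg(z/w)|<2\epsilon$ for $\epsilon$ small.

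The main obstacle across the whole lemma is part (a): turning the orthogonality relation into a quantitative lower bound on the variation of $\psi$ requires carefully constructing a test polynomial of controlled degree whose sign pattern is synchronized with the oscillation of $e^{i\psi}$ along a set that need not be an interval — the fact that $\supp(\nu)$ is merely contained in, not equal to, a union of intervals is what produces the $-m$ correction and is the delicate point in the counting.
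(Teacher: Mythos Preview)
Your sketches for parts (a), (c), and (d) are broadly on target, but part (b) contains a genuine error that breaks the argument.

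\textbf{Part (b): the key inequality is false.} You propose to deduce $V(g(\xi,\cdot),[a,b])\le\ang(\xi,[a,b])$ from two ingredients: first, that when $\Arg(t-\xi)$ and $2\Arg(t-1/\bar\xi)$ are monotone in the same direction, the variation of their difference is $|\max\{\cdot\}-\min\{\cdot\}|$; second, that $\ang(1/\bar\xi,[a,b])\le\tfrac12\ang(\xi,[a,b])$. Neither step is valid. For the first, same-direction monotonicity of $f$ and $h$ does \emph{not} imply $V(f-h)=|V(f)-V(h)|$; the difference can oscillate. For the second, take $\xi=0.99\,i$ and $[a,b]=[-\tfrac12,\tfrac12]$: then $\ang(\xi,[a,b])\approx0.94$ while $\ang(1/\bar\xi,[a,b])\approx0.92$, so $\ang(1/\bar\xi,[a,b])\not\le\tfrac12\ang(\xi,[a,b])$. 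The correct proof (which the paper imports from \cite[Lemma~5.2]{BKT05}) does not split $g$ into its two summands; it exploits a direct calculation of the derivative
\[
\partial_t g(\xi,t)=v\Bigl(\frac{1}{|t-\xi|^2}-\frac{2}{|1-\bar\xi t|^2}\Bigr),\qquad \xi=u+iv,
\]
and a more delicate analysis of its sign changes, together with the boundary values of $g$, to bound $V(g(\xi,\cdot),[a,b])$ by the angle directly. Your plan for (d) leans on the same flawed inequality (``using part (b) together with $\ang(1/\bar\xi_{j,n},I)\le\tfrac12(\pi-\ang(\xi_{j,n},I))$''), so as written it inherits the gap; moreover, since $m_n\to\infty$ and the individual angles $\ang(1/\bar\xi_{j,n},I)$ need not tend to zero, you cannot hope to bound the naive sum $\sum_j\ang(1/\bar\xi_{j,n},I)$ --- a finer cancellation argument is required (cf.\ \cite[Lemma~5.4]{BKT05}).

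\textbf{Part (c): a minor divergence from the paper.} Your plan extracts a convergent subsequence of $\{a_n\}$ and then builds $T$ for the limiting phase $\psi+a+\Arg Q$. Note that the $a_n$ themselves need not be bounded (only their derivatives are), so you must first pass to $e^{ia_n}$ before invoking Arzel\`a--Ascoli. The paper proceeds slightly differently: it first builds $T^*$ for $\psi+\Arg Q$ alone (independent of $n$) using \cite[Lemma~3.4]{BKT05}, then invokes Jackson's theorem to approximate each $e^{-ia_n}$ by a polynomial $T_{n,l}$ of fixed degree $l$, extracts a limiting polynomial $T_l$ along a subsequence $\N_1$, and sets $T=T^*T_l$. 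Both routes work, but the paper's avoids any appeal to compactness of $\{a_n\}$ itself.
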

\begin{proof}$(a)$ This assertion follows from the proof of 
Lemma 3.2 in \cite{BKT05} if we put $d_n=l$ there.
\smallskip
\newline
$(b)$ When $\xi\notin I$, the proof of this statement is contained
in that of Lemma 5.2 in \cite{BKT05}. In the remaining cases, one can see 
by inspection that 
(\ref{eq:variationByAngle}) reduces to $0\leq\pi$ when $\xi=b$ and to 
$\pi\leq\pi$ when $\xi\in[a,b)$.
\smallskip
\newline
$(c)$ Observe that $\varphi(x)=\psi(x)+\Arg(Q(x))$ is a real function of 
bounded variation on $I$. Therefore by Lemma 3.4 in \cite{BKT05}, 
there exist a polynomial $T^*\neq0$ and a constant $\beta^*\in(0,\pi/16)$ such that
$$\left|\Arg\left(e^{i\psi(x)}Q(x)T^*(x)\right)\right|=
\left|\Arg\left(e^{i\varphi(x)}T^*(x)\right)\right|
\leq\pi/2-2\beta^*$$
for $x\in I$, $Q(x)T^*(x)\neq0$. For later use we also record that, by the very construction of $T^*$ in 
the cited lemma, its zeros belong to $I$ and are discontinuity points of $\varphi$. 

Let $K$ be such that $|a_n^\prime(x)|\leq K$ for all $n\in\N$ and $x\in
I$, where the superscript ``prime'' indicates the derivative.
By  Jackson's theorem (cf. {\it e.g.} \cite{Powell}) there is a constant $C>0$ 
and there are polynomials $\{T_{n,l}\}$ of degree at most $l$ such that 
$$\left|e^{-ia_n(x)}-T_{n,l}(x)\right|\leq\frac{CK}{l}.$$
Fix $l$ so large that $CK/l\leq\beta^*/3$. Being bounded of bounded degree,
the sequence $\{T_{n,l}\}$ has a subsequence converging 
uniformly on $I$ to a polynomial  $T_l$ of degree at most $l$. Therefore, for some subsequence $\N_1$ we obtain
$$\left|1-e^{ia_n(x)}T_l(x)\right|\leq\frac{\beta^*}{2}, \;\;\; n\in\N_1,$$
which implies that
$$\left|\Arg\left(e^{ia_n(x)}T_l(x)\right)\right|\leq\frac{\beta^*\pi}{4}<\beta^*, \;\;\; n\in\N_1.$$
Now inequality (\ref{eq:polT}) follows by taking $T=T^*T_l$, $\beta=\beta^*/2$, and using that $|\Arg(\xi_1+\xi_2)|\leq |\Arg(\xi_1)|+|\Arg(\xi_2)|$ for any $\xi_1,\xi_2\in\C$.
\smallskip
\newline
$(d)$ This is exactly what is proved in Lemma 5.4 of \cite{BKT05}.
\end{proof}
\begin{lemma}
\label{lem:anglBound}
Let $q_n(z)=\prod_{j=1}^n(z-\xi_{j,n})$ satisfy (\ref{eq:mainOrthRel}) 
for $|\xi_{j,n}|<1$ for $j=1,\ldots,n$, where
$\wmes=\mes+\rmes$ is given by (\ref{eq:mesDecomp}), (\ref{boundedvarphi}), and (\ref{eq:deltaMeasure}),
with $S\subset(-1,1)$ and $S^\prime\subset\D$, while 
$\omega_n$ is a complex-valued measurable function on $S\cup S^\prime$.
Consider a covering of $S$ by finitely many disjoint closed intervals:
 $S\subseteq I_m:=\bigcup_{j=1}^m[a_j,b_j]$. Then\footnote{Note that the hypothesis $\mes\in\CoM$ is not required for this lemma to hold.}
\begin{equation}
\label{eq:angleBound}
\sum_{j=1}^n(\pi-\angs(\xi_{j,n}))\leq V(\arm)+V({\rm arg}(\omega_n),S)+\sum_{\eta\in\rspp}m(\eta)\angs(\eta)+(m+s-1)\pi,
\end{equation}
where ${\rm arg}(\omega_n)$ is any argument function for $\omega_n$ on $S$ and
$m(\eta)$ is the multiplicity of $\eta$.
\end{lemma}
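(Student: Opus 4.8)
The plan is to convert the orthogonality relations \eqref{eq:mainOrthRel} into a statement to which part $(a)$ of Lemma \ref{lem:aux} applies, and then to read off the bound by carefully accounting for the arguments of all the factors. First I would split the measure $\wmes=\mes+\rmes$ and handle the atomic part $\rmes$ by incorporating it into the polynomial: since $\rmes$ is supported on $\rspp\subset\D$, the relations \eqref{eq:mainOrthRel} remain valid after multiplying the test polynomial by $Q_s$, which yields genuine orthogonality against $\mes$ alone, namely $\int t^k q_n(t)\omega_n(t)Q_s(t)\widetilde q_n^{-2}(t)\,d\mes(t)=0$ for $k=0,\dots,n-s-1$ (this is precisely the passage from \eqref{eq:orthRel} to \eqref{eq:newOrthRel}). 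So I may work with a positive measure $\nu=|\mes|$ on $S\subseteq I_m$, a real function whose total variation I must control, and the integer $l=n-s$.

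Next I would write the weight in polar form. On $S$ we have $d\mes=e^{i\arm}d|\mes|$, the factor $q_n(t)/\widetilde q_n^2(t)$ contributes, after extracting moduli (which are harmless since part $(a)$ only concerns the argument), the phase $\sum_{j=1}^n\bigl(\Arg(t-\xi_{j,n})-2\Arg(t-1/\bar\xi_{j,n})\bigr)=\sum_{j=1}^n g(\xi_{j,n},t)$ in the notation of \eqref{eq:defg}, the factor $Q_s$ contributes $\sum_{\eta\in\rspp}m(\eta)\Arg(t-\eta)$, and $\omega_n$ contributes an argument function $\mathrm{arg}(\omega_n)$. Thus the integrand is $P_{l-s-1}(t)\,e^{i\Psi_n(t)}\,d\nu(t)$ with $\Psi_n=\arm+\mathrm{arg}(\omega_n)+\sum_j g(\xi_{j,n},\cdot)+\sum_\eta m(\eta)\Arg(\cdot-\eta)$ and $\nu$ positive, so part $(a)$ of Lemma \ref{lem:aux} (applied with $l$ replaced by $l-s=n-s$) gives $\sum_{j=1}^m V(\Psi_n,[a_j,b_j])\ge(n-s-m+1)\pi$.

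The remaining work is to bound $V(\Psi_n,I_m)$ from above by the sum of the variations of its pieces: $V(\arm,S)=V(\arm)$, $V(\mathrm{arg}(\omega_n),S)$, and for each zero $\xi_{j,n}$ the estimate $V(g(\xi_{j,n},\cdot),[a_l,b_l])\le\ang(\xi_{j,n},[a_l,b_l])$ from part $(b)$ of Lemma \ref{lem:aux}, which summed over the intervals gives $V(g(\xi_{j,n},\cdot),I_m)\le\angs(\xi_{j,n})$; finally each factor $\Arg(\cdot-\eta)$ with $\eta\in\rspp$ is a function whose total variation over the union of intervals is at most $\ang(\eta,I_m)=\angs(\eta)$, contributing $\sum_\eta m(\eta)\angs(\eta)$. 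Combining, $(n-s-m+1)\pi\le V(\arm)+V(\mathrm{arg}(\omega_n),S)+\sum_j\angs(\xi_{j,n})+\sum_\eta m(\eta)\angs(\eta)$, and rearranging, using $n=\sum_{j=1}^n 1$ so that $n\pi-\sum_j\angs(\xi_{j,n})=\sum_j(\pi-\angs(\xi_{j,n}))$, yields exactly \eqref{eq:angleBound}.

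The main obstacle, and the only genuinely delicate point, is the bookkeeping around which integer plays the role of ``$l$'' in part $(a)$: one must be careful that multiplying by $Q_s$ costs $s$ degrees of freedom in the test polynomial, so part $(a)$ is applied with $l=n-s$, producing the term $(m+s-1)\pi$ rather than $(m-1)\pi$ on the right of \eqref{eq:angleBound}. A minor subtlety is that part $(b)$ as stated concerns a single interval $[a,b]\subset(-1,1)$ while here the relevant phase $g(\xi,\cdot)$ must be controlled over the whole union $I_m$; this is handled by additivity of total variation over disjoint intervals together with additivity of the angle function in \eqref{eq:agnleSys}. No regularity hypothesis on $\mes$ (i.e. membership in $\CoM$) is used, consistent with the footnote in the statement.
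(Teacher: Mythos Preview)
Your proof is correct and follows essentially the same route as the paper: pass from \eqref{eq:mainOrthRel} to orthogonality against $\mes$ alone by inserting $Q_s$, apply Lemma~\ref{lem:aux}(a) with $l=n-s$ to obtain $\sum_j V(\Psi_n,[a_j,b_j])\ge (n-s-m+1)\pi$, and bound the left side termwise using Lemma~\ref{lem:aux}(b) together with $V(\Arg(\cdot-\eta),[a_j,b_j])=\ang(\eta,[a_j,b_j])$. The only slips are notational: the positive measure in Lemma~\ref{lem:aux}(a) should be $d\nu=\bigl|Q_s q_n\omega_n/\widetilde q_n^2\bigr|\,d|\mes|$ rather than $|\mes|$ (which you implicitly acknowledge when ``extracting moduli''), and your $P_{l-s-1}$ should read $P_{n-s-1}$.
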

\begin{proof} If $\omega_n$ has no argument function of bounded variation on $S$, 
there is nothing to prove. Otherwise, we pick one and
extend it to the whole of $\R$ without increasing the variation,
as explained in Section \ref{sec:merom}. In particular, we get 
\[V({\rm arg}(\omega_n),S)=\sum_{j=1}^mV({\rm arg}(\omega_n),[a_j,b_j]).\]
As in the case of (\ref{eq:newOrthRel}), equation (\ref{eq:mainOrthRel}) yields
\begin{equation}
\label{eq:form2OR}
\int P_{n-s-1}(t)Q_s(t)q_n(t)\frac {\omega_n(t)}{\widetilde q_n^2(t)}d\mes(t)=0,
\end{equation}
where $P_{n-s-1}$ is any polynomial in $\Pc_{n-s-1}.$ 

Denote by $\psi_n(t)$ an argument function for $e^{i\arm(t)}Q_s(t)q_n(t)\omega_n(t)/\widetilde q_n^2(t),$ say
$$\psi_n(t)=\arm(t)+{\rm arg}(\omega_n(t))+\sum_{\eta\in\rspp}m(\eta)\Arg(t-\eta)+\sum_{i=1}^n\left(\Arg(t-\xi_{i,n})-2\Arg(t-1/\bar\xi_{i,n})\right),$$
where it is understood that $\Arg(t-1/\bar\xi_{i,n})$ is omitted when $\xi_{i,n}=0$. It is easy to see that $\psi_n$ is of bounded variation.  Then Lemma \ref{lem:aux}(a) with 
$$\psi=\psi_n, ~~~ d\nu(t)=\left|\frac{Q_s(t)q_n(t)\omega_n(t)}{\widetilde q_n^2(t)}\right|d|\mes|(t), ~~~\mbox{and}~~~ l=n-s$$ 
implies that
$$\sum_{j=1}^mV(\psi_n,[a_j,b_j])\geq(n-s-m+1)\pi.$$
So, we are left to show that
$$\sum_{j=1}^mV(\psi_n,[a_j,b_j])\leq V(\arm)+\sum_{j=1}^mV({\rm arg}(\omega_n),[a_j,b_j])+\sum_{\eta\in\rspp}m(\eta)\angs(\eta)+\sum_{i=1}^n\angs(\xi_{i,n}).$$
By the definition of $\psi_n$, we have
\begin{eqnarray}
\sum_{j=1}^mV(\psi_n,[a_j,b_j]) &\leq& \sum_{j=1}^mV(\arm,[a_j,b_j]) + \sum_{j=1}^mV({\rm arg}(\omega_n),[a_j,b_j]) \nonumber \\ 
{} &+& \sum_{j=1}^m\sum_{\eta\in\rspp}m(\eta)V(\Arg(\cdot-\eta),[a_j,b_j]) \nonumber \\
{} &+& \sum_{j=1}^m\sum_{i=1}^nV(g(\xi_{i,n},\cdot),[a_j,b_j]), \nonumber
\end{eqnarray}
where $g(\xi,t)$ was defined in (\ref{eq:defg}). The assertion of the lemma now follows from Lemma \ref{lem:aux}(b) and the fact that, by monotonicity, $V(\Arg(\cdot-\xi),[a,b])=\ang(\xi,[a,b])$.
\end{proof} 
\begin{corollary}
\label{cor:numZeros}
Let $\varphi$ and ${\rm arg}(\omega_n)$ have bounded variation on $S$, and assume further that $V({\rm arg}(\omega_n),S)<C$, where $C$ is independent on $n$.
Then, to each neighborhood $U$ of $\spp$, there exists a constant $k_U\in\N$ such that each $q_n$ has at most $k_U$ zeros outside of $U$ for $n$ large enough.
\end{corollary}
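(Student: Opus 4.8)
The plan is to obtain the corollary as a direct consequence of the estimate \eqref{eq:angleBound} in Lemma~\ref{lem:anglBound}. Fix a neighbourhood $U$ of $\spp$; if $\overline\D\subset U$ there is nothing to prove, so assume not. Since $\spp$ is compact and $U$ is open, I would first pick a covering of $\spp$ by finitely many pairwise disjoint closed intervals $I_m=\bigcup_{j=1}^m[a_j,b_j]$ with $I_m\subset U$, and apply Lemma~\ref{lem:anglBound} with this particular covering. For $n$ large enough (so that the reduction leading to (\ref{eq:form2OR}), i.e. $n>s$, is available) this yields
\[
\sum_{j=1}^n\bigl(\pi-\angs(\xi_{j,n})\bigr)\leq V(\arm)+V({\rm arg}(\omega_n),\spp)+\sum_{\eta\in\rspp}m(\eta)\angs(\eta)+(m+s-1)\pi .
\]
By hypothesis $V(\arm)$ is a fixed finite number and $V({\rm arg}(\omega_n),\spp)<C$ with $C$ independent of $n$, while the remaining terms depend only on $F$ and on the chosen covering; hence the right-hand side is bounded above by a constant $M=M(U)$ that does not depend on $n$.

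The second step is to check that $\angs(\cdot)$ is bounded away from $\pi$ on $\overline\D\setminus U$. Because $I_m\subset U$ and $\overline\D\setminus U$ is compact, one has $\dist(\overline\D\setminus U,I_m)>0$; at points lying at a positive distance from $I_m$ the angle function $\angs$ is continuous and, since $\angs(\xi)=\pi$ precisely when $\xi\in I_m$, it takes values strictly below $\pi$ there. Consequently
\[
\delta:=\pi-\max_{\xi\in\overline\D\setminus U}\angs(\xi)
\]
is a strictly positive constant depending only on $U$.

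Finally, every zero $\xi_{j,n}$ of $q_n$ lies in $\D\subset\overline\D$, so each zero of $q_n$ outside $U$ contributes at least $\delta$ to the left-hand side of the displayed inequality. Comparing with the bound $M$ gives
\[
\delta\cdot\#\{\,j:\xi_{j,n}\notin U\,\}\leq\sum_{j=1}^n\bigl(\pi-\angs(\xi_{j,n})\bigr)\leq M ,
\]
so $q_n$ has at most $k_U:=\lfloor M/\delta\rfloor$ zeros outside $U$ once $n$ is large. The only point requiring a little care is the continuity–compactness argument giving $\delta>0$, and it is precisely there that it matters to have chosen the interval covering $I_m$ inside $U$; apart from that, the proof is just a substitution into Lemma~\ref{lem:anglBound}.
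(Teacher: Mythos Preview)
Your proof is correct and follows essentially the same route as the paper: choose a finite system of closed intervals covering $\spp$ inside $U$, apply Lemma~\ref{lem:anglBound}, bound the right-hand side uniformly in $n$, and observe that each zero outside $U$ contributes a fixed positive amount to the left-hand side. You have simply made explicit the compactness/continuity argument that yields the positive constant $\delta$ and the resulting bound $k_U=\lfloor M/\delta\rfloor$, whereas the paper leaves this as an observation.
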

\begin{proof} Since $U$ is open, its intersection with $(-1,1)$ is a countable union of intervals. By compactness, a finite number of them will cover $\spp$, say $\cup_{j=1}^m(a_j,b_j)$. Apply Lemma \ref{lem:anglBound} to the closure of these intervals and observe that any zero of $q_n$ which lies outside of $U$ will contribute to the left-hand side of (\ref{eq:angleBound}) by more than some positive fixed constant which depends only on $U$. Since the right-hand side of (\ref{eq:angleBound}) does not depend on $n$ and is finite we can have only finitely many such zeros.
\end{proof}
\begin{proof}[Proof of Theorem \ref{thm:weakLim}] Hereafter we are going to use (\ref{eq:form2OR}) rather than (\ref{eq:mainOrthRel}) and we set $q_n(z)=\prod_{j=1}^n(z-\xi_{j,n})$.

We start by observing that we may suppose $\spp\subset(0,1)$. Indeed, if this is not the case, take  a negative number $w$ such that $-1<w<a$, where $[a,b]=I$ denotes the convex hull of $\spp$. Then $M_w(\spp)$, the image of $\spp$ under the M\"obius transformation $M_w(z):=(z-w)/(1-zw)$, is a subset of $(0,1)$. Moreover, the Green equilibrium measure is invariant under M\"{o}bius transformations, i.e., for any Borel set $E\subset M_w(\spp)$ we have that 
$$\mu_G^{M_w(\spp)}(E)=\mu_G^\spp(M_{-w}(E))$$
($M_{-w}$ is the inverse function of $M_w$). This implies that the weak$^*$ convergence of $\nu_n$ to $\ged$ is equivalent to that of $\nu_n^w$ to $\mu_G^{M_w(S)},$ where $\nu_n^w$ is the counting measure of the images of the zeros of $q_n$ under $M_w$. Now, if we let
\begin{eqnarray}
\ell_n(\tau)     &=& q_n(M_{-w}(\tau))(1+w\tau)^n, \nonumber \\
L_s(\tau)        &=& Q_s(M_{-w}(\tau))(1+w\tau)^s, \nonumber \\
p_{n-s-1}(\tau)  &=& P_{n-s-1}(M_{-w}(\tau))(1+w\tau)^{n-s-1}, \nonumber \\
\omega^*_n(\tau) &=& \omega_n(M_{-w}(\tau)), \nonumber
\end{eqnarray}
then $\ell_n$ is a polynomial of degree $n$ with zeros at $M_w(\xi_{j,n})$, $j=1,\ldots,n$. In addition, since $M_x(1/\bar\xi)=1/\overline{M_x(\xi)}$, $x\in(-1,1)$, we have that
$$\widetilde\ell_n(\tau)=\widetilde q_n(M_{-w}(\tau))(1+w\tau)^n.$$
Analogously, $L_s$ is a polynomial of degree $s$ with zeros at $M_w(\eta)$, $\eta\in\rspp$, and $p_{n-s-1}$ is an arbitrary polynomial of degree at most $n-s-1$. 
Let us show that $\ell_n$ satisfies orthogonality relations of type 
(\ref{eq:form2OR}) for a new measure, supported this time in $(0,1)$,
that still belongs to $\CoM$. 

With the above notation equation (\ref{eq:form2OR}) can be rewritten as
\begin{eqnarray}
0  &=& \int_\spp P_{n-s-1}(t)Q_s(t)q_n(t)\frac {\omega_n(t)}{\widetilde{q}_n^2(t)}e^{i\arm(t)}d|\mes|(t) \nonumber \\
{} &=& \int_{M_w(\spp)} (P_{n-s-1}Q_sq_n)(M_{-w}(\tau))\frac {\omega_n(M_{-w}(\tau))}{\widetilde{q}_n^2(M_{-w}(\tau))} e^{i\arm(M_{-w}(\tau))}d|\mes|(M_{-w}(\tau)) \nonumber \\
{} &=& \int_{M_w(\spp)}p_{n-s-1}(\tau)L_s(\tau)\ell_n(\tau)\frac {\omega^*_n(\tau)}{\widetilde{\ell}_n^2(\tau)}e^{i\arm^*(\tau)}d|\mes^*|(\tau), \nonumber
\end{eqnarray}
where $d|\mes^*|(\tau)=(1+w\tau)d|\mes|(M_{-w}(\tau))$ is a positive measure supported on $M_w(\spp)$, $\arm^*(\tau)=\arm(M_{-w}(\tau))$ is a function of bounded variation, and $\{\omega^*_n\}$ is a sequence of measurable functions whose moduli are uniformly bounded above and below, and whose arguments are smooth with uniformly bounded derivatives. Further, since Green functions are conformally invariant,$M_w(\spp)$ is regular so clearly $\mes^*\in\CoM$. This allows us to assume that $\spp\subset(0,1)$.

First we will suppose that {\it all zeros of the polynomials $q_n$ lie outside some fixed neighborhood of zero.}

For each $n$ denote by $\sigma_n$ the counting measure of zeros of $\widetilde q_n$. By the assumption that we just made there exists a compact set $K$ such 
that $0\notin K$ and $\supp(\nu_n)\subset K$. Then 
$\supp(\sigma_n)\subset \bar{K}^{-1}$ for all $n\in\N$, and
 $\bar{K}^{-1}$ is also compact. By Helly's selection theorem 
(cf. \cite[Thm. 0.1.3]{SaffTotik}) there exists a subsequence of natural 
numbers, $\N_1$, such that $\nu_n\cws\nu$ for $n\in\N_1$, where $\cws$ stands 
for weak$^*$ convergence. Denote by $\sigma$ the reflection of $\nu$ across 
the unit circle, i.e. $d\sigma(t)=d\nu\circ(1/\bar{t})$. It is easy to check that 
$\sigma_n\cws\sigma$. Observing that the assumptions on $\omega_n$ imply that 
the variation of its argument on $I$, thus {\it a fortiori} on $S$, is bounded independently of $n$, it follows from Corollary \ref{cor:numZeros} that
$\nu$ and $\sigma$ 
are probability measures such that $\supp(\nu)\subset\spp\subset (0,1)$ and 
$\supp(\sigma)\subset\bar{\spp}^{-1}=\spp^{-1}$.

{\it Claim:} it is enough to show that the logarithmic potential of $\nu-\sigma$ is a constant q.e. on $\spp$. Indeed, let $U^{\nu-\sigma}=D_1$ q.e. on $\spp$, where
$$U^{\nu-\sigma}=\int\log\frac{1}{|z-t|}d(\nu-\sigma)(t).$$
Then, since $U^\sigma$ is harmonic outside of $\spp^{-1}$, we have that $U^\nu$ is bounded quasi everywhere on $\spp$, hence everywhere by lower semi-continuity of potentials. Thus, $\nu$ has finite energy and by reflection so does $\sigma$. 
Moreover, for quasi every $z\in\spp^{-1}$, we have
\begin{eqnarray}
\label{eq:potConst}
U^{\nu-\sigma}(z) &=& \int\log\frac{1}{|z-t|}d(\nu-\sigma)(t)=\int\log\frac{1}{|z-1/x|}d\left[(\nu-\sigma)\circ(1/x)\right] \nonumber \\
{} &=& \int\log\left|\frac{x/z}{x-1/z}\right|d(\sigma-\nu)(x)= \int\log|x/z|d(\sigma-\nu)(x)-U^{\nu-\sigma}(1/z) \nonumber \\
{} &=& \int\log|x|d(\sigma-\nu)(x)-D_1 =:D_2, 
\end{eqnarray}
where we used that $\nu-\sigma$ has total mass zero. Now, denote by $\widehat\sigma$ the balayage of $\sigma$ onto $S$. Then
$$U^{\widehat\sigma}(t)=U^\sigma(t)+c(\sigma;\C\setminus\spp)$$ 
for quasi every $t$ on $\spp$. Thus, as $(\nu-\widehat\sigma)(\C)=0$ and  
since $\nu$ and $\widehat\sigma$ have finite energy, we get
\begin{eqnarray}
0  &=& \int D_1 \: d(\nu-\widehat\sigma)(z) = \int U^{\nu-\sigma}(z)d(\nu-\widehat\sigma)(z) = \int U^{\nu-\widehat\sigma}(z)d(\nu-\widehat\sigma)(z) \nonumber \\
{} &=& \int \log\frac{1}{|z-t|}d(\nu-\widehat\sigma)(t)d(\nu-\widehat\sigma)(z). \nonumber
\end{eqnarray}
But the energy of a signed measure is equal to zero if and only if the measure is zero (\cite[Lemma I.1.8]{SaffTotik}), provided that this measure is the difference of two positive measures with finite energy; thus, $\nu=\widehat\sigma$. Using (\ref{eq:potConst}), we can obtain in a similar fashion that $\sigma=\widehat\nu$, where $\widehat\nu$ is the balayage of $\nu$ onto $\spp^{-1}$. Hence, we proved that $\sigma-\nu$ is the equilibrium signed measure for the condenser $(\spp,\spp^{-1})$ (\cite[Thm. VIII.2.6]{SaffTotik}). Then Proposition A points (b) and (e) of the appendix ensures that $\nu=\ged$ is the Green equilibrium distribution relative to both $\D$ and $\C\setminus\spp^{-1}$. Since $\{\nu_n\}_{n\in\N_1}$ was an arbitrary weak$^*$ convergent subsequence, the whole sequence $\{\nu_n\}_{n\in\N}$ converges to $\mu_G^\spp$ in the weak$^*$ sense. {\it This proves the claim}.

Being left to prove that $U^{\nu-\sigma}$ is a constant q.e. on $\spp$,
suppose to the contrary that this is not true. Then there exist nonpolar Borel subsets of $\spp$, say $E_-$ and $E_+$, and two constants $d$ and $\tau>0$ such that
$$U^{\nu-\sigma}(x)\geq d+\tau \;\;\; \mbox{for} \;\;\; x\in E_+ \;\;\; \mbox{and} \;\;\; U^{\nu-\sigma}(x)\leq d-2\tau \;\;\; \mbox{for} \;\;\; x\in E_-.$$
In this case we claim that there exists $y_0\in\supp(\nu)$ such that
\begin{equation}
\label{eq:point}
U^{\nu-\sigma}(y_0)>d.
\end{equation}
Indeed, otherwise we would have that
\begin{equation}
\label{eq:ineqP}
U^\nu(x)\leq U^\sigma(x)+d, \;\;\; x\in\supp(\nu).
\end{equation}
Then the  principle of domination (\cite[Thm. II.3.2]{SaffTotik}) would yield that (\ref{eq:ineqP}) is true for all $z\in\C$, but this would contradict the existence of $E_+$.

Since all $\sigma_n$ are supported outside of the closed unit disk, the sequence of potentials $\{U^{\sigma_n}\}_{n\in\N_1}$ converges to $U^\sigma$ uniformly on compact subsets of $\D$. This implies that for any given sequence of points $\{y_n\}\subset\D$ such that $y_n\to y_0$ as $n\to\infty$, $n\in\N_1$, we have
\begin{equation}
\label{eq:sigmaConv}
\lim_{n\to\infty, \; n\in\N_1}U^{\sigma_n}(y_n)=U^\sigma(y_0).
\end{equation}
On the other hand all $\nu_n$, $n\in\N_1$, have their support in $\D$. So, by applying the principle of descent (\cite[Thm.I.6.8]{SaffTotik}) for the above sequence $\{y_n\}$, we obtain
\begin{equation}
\label{eq:nuConv}
\liminf_{n\to\infty, \; n\in\N_1} U^{\nu_n}(y_n)\geq U^\nu(y_0).
\end{equation}
Combining (\ref{eq:point}), (\ref{eq:sigmaConv}), and (\ref{eq:nuConv}) we get
\begin{equation}
\label{eq:nuSigmaConv}
\liminf_{n\to\infty \; n\in\N_1}U^{\nu_n-\sigma_n}(y_n)\geq U^{\nu-\sigma}(y_0)>d.
\end{equation}
Since $\{y_n\}$ was an arbitrary sequence in $\D$ converging to $y_0$, we deduce from (\ref{eq:nuSigmaConv}) that there exists $\rho>0$ such that, for any $y\in[y_0-2\rho,y_0+2\rho]$ and $n\in\N_1$ large enough, the following inequality holds
\begin{equation}
\label{eq:lowerBound}
U^{\nu_n-\sigma_n}(y)\geq d.
\end{equation}
Since 
$$U^{\nu_n-\sigma_n}(y)=\frac1n\log\left|\frac{1}{\alpha_n}\frac{\widetilde q_n(y)}{q_n(y)}\right|,$$
where $\alpha_n:=\prod_{j=1}^n|\xi_{j,n}|$, inequality (\ref{eq:lowerBound}) can be rewritten as
\begin{equation}
\label{eq:boundForBlaschke}
\left|\alpha_n\frac{q_n(y)}{\widetilde q_n(y)}\right|\leq e^{-nd}
\end{equation}
for any $y\in[y_0-2\rho,y_0+2\rho]$ and $n\in\N_1$ large enough.

Here we notice for later use that the above reasoning does not really require the polynomials $q_n$ to have exact degree $n$. Specifically, let $\{p_n\}$ be a sequence of 
monic polynomials of degree $d_n=n+o(n)$ where $o$ denotes the Landau symbol 
``little oh''. Moreover, suppose that the counting measures of 
their zeros normalized by $1/n$ rather than $1/d_n$ (so it may no longer be a probability measure) are supported on a fixed compact set of the complex plane. Call $\mu_n$ these measures and assume that they converge to $\nu$ in the weak$^*$ topology. 
In this case (\ref{eq:nuConv}) and (\ref{eq:nuSigmaConv}) still hold with $\nu_n$ replaced by $\mu_n$, at the cost perhaps 
of dropping finitely many terms of $\N_1$ and making $\rho$ smaller. Thus, we obtain that
\begin{equation}
\label{eq:upperBound}
\left|\alpha_n\frac{p_n(y)}{\widetilde q_n(y)}\right|\leq e^{-nd}
\end{equation}
for any $y\in[y_0-2\rho,y_0+2\rho]$ and $n\in\N_1$ large enough.

In another connection, since $U^{\nu-\sigma}(x)\leq d-2\tau$ on $E_-$, applying the lower envelope theorem (\cite[Thm. I.6.9]{SaffTotik}) gives us
\begin{equation}
\label{eq:lowerEnv}
\liminf_{n\to\infty, \; n\in\N_1}U^{\nu_n-\sigma_n}(x)=U^{\nu-\sigma}(x)\leq d-2\tau, \;\;\; \mbox{for q.e.} \;\;\; x\in E_-.
\end{equation}
Let $Z$ be a finite system of points from $(-1,1)$, to be specified later. Then by \cite{Ank83,Ank_CTP84} there exists\footnote{In \cite{BKT05} $S_0$ is not introduced, which makes there for a slightly incorrect argument in Theorem 3.1. 
An alternative remedy in that reference would be to apply the lower envelope 
theorem to $Tq_n$ rather than $q_n$, as they have the same asymptotic zero
distribution.} 
$\spp_0\subset\spp$ such that $\spp_0$ is regular$, \cp(E_-\cap\spp_0)>0$ and $\dist(Z,\spp_0)>0$, where $\dist(Z,\spp_0):=\min_{z\in Z}\dist(z,\spp_0)$. 
Put for simplicity $b_n(x)=q_n(x)/\widetilde q_n(x)$, 
which is a finite Blaschke product. 
Then 
$$U^{\nu_n-\sigma_n}(x)=-\frac1n\log|\alpha_nb_n(x)|$$
and by (\ref{eq:lowerEnv}), as $|\alpha_n|<1$, 
there exist $\N_2\subset\N_1$ and $x\in E_-\cap\spp_0$ such that 
$$|b_n(x)| \geq |\alpha_nb_n(x)|\geq e^{-n(d-\tau)}$$
for any $n\in\N_2$. Let $x_n$ be a point where $|b_n|$ reaches its maximum on $\spp_0$. Then
\begin{equation}
\label{eq:maxOfBlaschke}
M_n := \max_{x\in\spp_0}|b_n(x)| = |b_n(x_n)| \geq |\alpha_n| 
M_n = |\alpha_nb_n(x_n)| \geq e^{-n(d-\tau)}.
\end{equation}
Note that $M_n<1$ and therefore $d-\tau$ is necessarily positive. For simplicity, we shall denote $\D\setminus\spp_0$ by $D$. Since the modulus of a Blaschke product is bounded by 1 in the unit disk and $\log|b_n|$ is a subharmonic function, the two-constant theorem (\cite[Thm. 4.3.7]{Ransford}) yields the following pointwise estimate
\begin{equation}
\label{eq:logBlaschke}
\log|b_n(z)|\leq\omega_D(z,\spp_0)\log|M_n|
\end{equation}
for any $z\in D$, where $\omega_D(z,\spp_0)$ is the harmonic measure on $D$ (cf. \cite[Sec. 4.3]{Ransford}). Combining the last two inequalities we get
\begin{equation}
\label{eq:boundBlaschke}
|b_n(z)|\leq (M_n)^{\omega_D(z,\spp_0)}=M_n\left(\frac{1}{M_n}\right)^{1-\omega_D(z,\spp_0)}\leq M_ne^{n(d-\tau)(1-\omega_D(z,\spp_0))}
\end{equation}
for $z\in D$, and obviously also when $z\in\spp_0$, where 
$\omega_D(\cdot,\spp_0)=1$ for $S_0$ is regular. Moreover, 
by the regularity of $S_0$ again, it is known (\cite[Thm. 4.3.4]{Ransford}) 
that for any $x\in\spp_0$ $$\lim_{z\to x}\omega_D(z,\spp_0)=1$$
uniformly with respect to $x\in\spp_0$. 
Thus, for any $\delta>0$ there exists $r(\delta)<\dist(\spp_0,\T)$ such that for $z$ satisfying $\dist(z,\spp_0)\leq r(\delta)$ we have
$$1-\omega_D(z,S_0)\leq\frac{\delta}{d-\tau}.$$
This, together with (\ref{eq:boundBlaschke}), implies that for fixed $\delta$, to be adjusted later, we have
$$|b_n(z)|\leq M_ne^{n\delta}, \;\;\; |x_n-z|\leq r(\delta).$$
Note that $b_n$ is analytic in $\D$, which, in particular, yields 
for $|z-x_n|<r(\delta)$
$$b^\prime_n(z)=\frac{1}{2\pi i}\int_{|\xi-x_n|=r(\delta)}\frac{b_n(\xi)}{(\xi-z)^2}d\xi.$$ 
Thus, for any $z$ such that $|z-x_n|\leq r(\delta)/2$ we get
$$|b^\prime_n(z)|\leq\frac{1}{2\pi}\cdot\frac{4M_ne^{n\delta}}{r^2(\delta)}\cdot2\pi r(\delta)=\frac{4M_ne^{n\delta}}{r(\delta)}.$$
Now, for any $x$ such that 
\begin{equation}
\label{eq:circle}
|x-x_n|\leq\frac{r(\delta)}{8e^{n\delta}},
\end{equation}
the mean value theorem yields
$$|b_n(x)-b_n(x_n)|\leq\frac{4M_ne^{n\delta}}{r(\delta)}|x-x_n|=\frac{M_n}{2}.$$
Thus, for $x$ satisfying (\ref{eq:circle}), we have
$$|b_n(x)|\geq|b_n(x_n)|-|b_n(x)-b_n(x_n)|\geq M_n-\frac{M_n}{2}=\frac{M_n}{2}$$
and, by (\ref{eq:maxOfBlaschke}),
\begin{equation}
\label{eq:fromBelow}
|\alpha_nb_n(x)| \geq \frac{|\alpha_n| M_n}{2} \geq \frac12e^{-n(d-\tau)}.
\end{equation}
The estimates (\ref{eq:upperBound}), (\ref{eq:fromBelow}), together with the relation (\ref{eq:form2OR}) are the main ingredients in proving the claim that $U^{\nu-\sigma}$ is constant q.e. on $\spp$. To combine them we shall use a specific choice of $P_{n-s-1}$ in (\ref{eq:form2OR}). 

First, we pick a polynomial $T$ such that Lemma \ref{lem:aux}-(c) holds 
with $\psi=\arm$, $a_n=\Arg(\omega_n)$, $Q=Q_s$, and $[a,b]=I$ for 
$n\in\N_3\subset\N_2$. We denote by $k$ the degree of $T$. 
Second, for each $n\in\N_3$, we choose $T_{l,n}$ as in Lemma \ref{lem:aux}-(d) 
with $\epsilon=\delta/9$. Note that the use of Lemma \ref{lem:aux}-(d) 
is legitimate by
Lemma \ref{lem:anglBound} and our assumptions on $S$, $\varphi$ and $\omega_n$.
Since all $T_{l,k}$ are bounded on $I$ by definition and have degree at 
most $l$, which does not depend on $n$, there exists $\N_4\subset\N_3$ such 
that sequence $\{T_{l,n}\}_{n\in\N_4}$ converges uniformly to some polynomial 
$T_l$ on $I$. 
In particular, we have that $\deg(T_l)\leq l$ and the argument of 
$T_l(x)/\widetilde q_n(x)$ lies in $(-\delta/4,\delta/4)$ for $n\in\N_4$ 
large enough. Denote by $2\alpha$ the smallest even integer strictly 
greater than $2l+k+s$. As soon as $n$ is large enough, since 
$y_0\in\supp(\nu)$, there exist $\beta_{1,n},\ldots,\beta_{2\alpha,n}$, 
zeros of $q_n$, lying in 
$$O_\gamma\left([y_0-\rho,y_0+\rho]\right):=\left\{z\in\C: \; \dist\left(z,[y_0-\rho,y_0+\rho]\right)\leq \gamma\right\},$$
where $\gamma$, $0<\gamma<\rho$, will be specified later. Define 
$$P^*_n(z)=\frac{\overline{q_n(\overline z)}T(z)T^2_l(z)}{\prod_{j=1}^{2\alpha}(z-\overline \beta_{j,n})}.$$
The polynomial $P_n^*$ has degree $n-s-1$ or $n-s-2$, depending on the parity of $k+s$. 

Denote by $I_n\subset(0,1)$ the interval defined by (\ref{eq:circle}). By comparing (\ref{eq:boundForBlaschke}) with (\ref{eq:fromBelow}), it is clear that $I_n$ and $[y_0-2\rho,y_0+2\rho]$ are disjoint when $n\in\N_4$ is large enough.

Now, we choose $\gamma=\gamma(\rho)$ so small that
$$\left|\sum_{j=1}^{2\alpha}\Arg\left(\frac{1}{x-\overline\beta_{j,n}}\right)\right|=\left|\sum_{j=1}^{2\alpha}\Arg\left(x-\overline\beta_{j,n}\right)\right| \leq\delta/2, \;\;\; x\in \R\setminus[y_0-2\rho,y_0+2\rho].$$ 
Letting $\delta$ be such that $\delta<\beta$, the choices of $T$, $T_l$, 
and $P_n^*$ together imply
\begin{eqnarray}
{} && \left|\Arg\left(P^*_n(x)Q_s(x)q_n(x)\frac{\omega_n(x)}{\widetilde q_n^2(x)}e^{i\arm(x)}\right)\right| \nonumber \\  
{} &=& \left|\Arg\left(|q_n(x)|^2\cdot\prod_{j=1}^{2\alpha}\frac{1}{(x-\overline\beta_{j,n})}\cdot\frac{T_l^2(x)}{\widetilde q_n^2(x)}\cdot T(x)Q_s(x)\omega_n(x)e^{i\arm(x)}\right)\right| \nonumber \\
{} &\leq& \frac{\delta}{2}+\frac{\delta}{2}+\frac{\pi}{2}-2\beta\leq\pi/2-\delta, \nonumber
\end{eqnarray}
for $x\in I\setminus[y_0-2\rho,y_0+2\rho]$ except perhaps at points where 
$T$ or $Q_s$ are equal to zero. This means that for such $x$
\begin{eqnarray}
\re\left(|\alpha_n|^2(P^*_nQ_sq_n)(x)\frac{\omega_n(x)}{\widetilde q_n^2(x)}e^{i\arm(x)}\right) &\geq& \sin\delta\left|\alpha_n^2(P^*_nQ_sq_n)(x)\frac{\omega_n(x)}{\widetilde q_n^2(x)}e^{i\arm(x)}\right| \nonumber \\
\label{eq:bb1}
{} &\geq& \sin\delta\left|\frac{\alpha_n^2(b_n^2Q_sTT_l^2\omega_n)(x)}{\prod_{j=1}^{2\alpha}(x-\overline\beta_{j,n})}\right|\geq0.
\end{eqnarray}
Moreover, if $x\in\spp\setminus[y_0-2\rho,y_0+2\rho]$ satisfies (\ref{eq:circle}), then by (\ref{eq:maxOfBlaschke}) and (\ref{eq:fromBelow}) the above quantity is bounded from below by
\begin{eqnarray}
|T(x)Q_s(x)|\frac{\sin\delta~\min_{x\in[a,b]}|T_l(x)|^2~\inf_{n\in\N}\min_{x\in[a,b]}|\omega_n(x)|}{4(\diam(\spp)+2\rho)^{2\alpha}}e^{-2nd+2n\tau} \nonumber \\
 = c_1|T(x)Q_s(x)|e^{-2nd+2n\tau}, \nonumber
\end{eqnarray}
where $\diam(\spp):=\max_{x,y\in\spp}|x-y|$ and
$$c_1:=\frac{\sin\delta~\min_{x\in[a,b]}|T_l(x)|^2~\inf_{n\in\N}\min_{x\in[a,b]}|\omega_n(x)|}{4(\diam(\spp)+2\rho)^{2\alpha}}>0$$
by the construction of $T_l$ and the uniform boundedness of $\{|\omega_n|\}$ from below. Thus
\begin{eqnarray}
\label{eq:contrBelow}
{} && \re\left(\int_{\spp\setminus[y_0-2\rho,y_0+2\rho]}|\alpha_n|^2P^*_n(t)Q_s(t)q_n(t)\frac{\omega_n(t)}{\widetilde q_n^2(t)}e^{i\arm(t)}d|\mes|(t)\right) \nonumber \\
{} &\geq& \sin\delta\int_{\spp\setminus[y_0-2\rho,y_0+2\rho]}\left|\alpha_n^2P^*_n(t)Q_s(t)q_n(t)\frac{\omega_n(t)}{\widetilde q_n^2(t)}e^{i\arm(t)}\right|d|\mes|(t) \nonumber \\
{} &\geq&  c_1e^{-2nd+2n\tau}\int_{\spp\cap I_n}|T(t)Q_s(t)|d|\mes|(t)\geq c_2e^{-2nd+n(2\tau-L\delta)}.
\end{eqnarray}
The last inequality is true by the following argument. First observe that from (\ref{eq:fromBelow}) and \ref{eq:boundForBlaschke}) that $I_n\cap[y_0-2\rho,y_0+2\rho]=\emptyset$ for all $n$ large enough. Next, recall that $x_n$, the middle point of $I_n$, belongs to $\spp_0$, where $\dist(\spp_0,Z)>0$ and $Z$ is a finite system of points that we now choose to be the zeros of $TQ_s$ in $(-1,1)$ if any. Then $TQ_s$, which is independent of $n$, is uniformly bounded below on $I_n$ and (\ref{eq:contrBelow}) follows from this and hypothesis $\CoM$ point (2). On the other hand (\ref{eq:boundForBlaschke}), and (\ref{eq:upperBound}) applied with $p_n=P_n^*/$(leading coefficient of $TT_l^2$), yield that
\begin{equation}
\label{eq:contrAbove}
\left|\int_{[y_0-2\rho,y_0+2\rho]}|\alpha_n|^2P^*_n(t)Q_s(t)q_n(t)\frac{\omega_n(t)}{\widetilde q_n^2(t)}e^{i\arm(t)}d|\mes|(t)\right|\leq c_3e^{-2nd},
\end{equation}
where we used uniform boundedness of $\{|\omega_n|\}$ from above. 
Choosing $\delta$ so small that $2\tau-L\delta>0$, which is possible, the bound in (\ref{eq:contrBelow}) becomes bigger than the bound in (\ref{eq:contrAbove}) for $n$ large enough. But this is impossible, since by (\ref{eq:form2OR}) the sum of these two integrals must be zero.

We just completed the case when all the zeros of polynomials $q_n$ stay away from the point zero. Now we shall consider the general situation. Let $\epsilon>0$ be such that $U:=\D\setminus\{z:\; |z|\leq\epsilon\}$ is a neighborhood of $\spp$. Corollary \ref{cor:numZeros} says that there exists a constant $k_U\in\N$ such that each $q_n$ has at most $k_U$ zeros outside of $U$, that is zeros which have modulus less than $\epsilon$. In this case,
from any sequence of natural numbers, we can extract a subsequence, 
say $\N_0$, such that for some number $m\leq k_U$, $q_n$ has 
exactly $m$ zeros outside of $U$ 
for each $n\in\N_0$. Denote these zeros $\xi_{1,n},\ldots,\xi_{m,n}$, 
and consider the polynomials
$$q_n^*(z):=\frac{\widetilde q_n(z)}{\prod_{j=1}^m(1-z\overline \xi_{j,n})}, \;\;\; n\in\N_0.$$
Then the sequence $\{q_n\}_{n\in\N_0}$ will satisfy the following weighted orthogonality relation:
$$\int P_{n-s-1}(t)Q_s(t)q_n(t)\frac{\omega^*_n(t)}{(q_n^*(t))^2}d\mes(t)=0, \;\;\; P_{n-s-1}\in\Pc_{n-s-1},$$
where 
$$\omega^*_n(t):=\omega_n(t)\prod_{j=1}^m(1-t\overline\xi_{j,n})^{-2}, \;\;\; t\in I.$$

In what follows we are going to stress the modifications needed to adapt the previous proof to the present case. Let, as before, $\N_1\subset\N_0$ be a subsequence of natural numbers such that $\nu_n\cws\nu$, $n\in\N_1$. Because we only discarded a fixed number of zeros from $\widetilde q_n(z)$
to obtain $q_n^*$, the counting measures $\sigma_n^*$ of the zeros of 
$q_n^*$ (normalized by $1/n$), again converge weak$^*$ to $\sigma$. Since $U^{\sigma_n^*}$ enjoys all the relevant properties of $U^{\sigma_n}$, inequalities (\ref{eq:boundForBlaschke}) and (\ref{eq:upperBound}) remain valid with $\widetilde q_n$ replaced by $q_n^*$. 

Further, define $b_n(x)$ as $q_n(x)/q_n^*(x)$. In this case, $b_n$ is no 
longer a Blaschke product, but rather a Blaschke product multiplied 
by the polynomial$\prod_{j=1}^m(z-\xi_{j,n})$. Then we 
get instead of (\ref{eq:logBlaschke}) that
$$\log|b_n(z)|\leq\omega_D(z,S_0)\log|M_n|+(1-\omega_D(z,S_0))m\log2,$$
 and (\ref{eq:boundBlaschke}) can be replaced by
$$|b_n(z)|\leq M_ne^{n(d-\tau+m\log2/n)(1-\omega_D(z,\spp_0))}\leq M_ne^{n(d-\tau+1)(1-\omega_D(z,\spp_0))},$$
for $n$ large enough. This yields an insignificant modification of $r(\delta)$ (we should make $1-\omega_D(z,\spp_0)$ less than $\delta/(d-\tau+1)$ rather than just $\delta/(d-\tau)$).
Lemma \ref{lem:aux}(d) can be applied to the polynomials $q_n^*$ rather than $\widetilde q_n$ without change.

Therefore we are left to show that $\{\omega_n^*\}$ is uniformly bounded above and below on $I$, and that its arguments are smooth with uniformly bounded 
derivatives on $I$ with respect to $n$. The uniform boundedness of 
$\{\omega_n^*\}$ easily follows from the estimates 
$$\left(\frac12\right)^{2m}\leq\prod_{j=1}^m\left|\frac{1}{(1-t\overline\xi_{j,n})}\right|^2\leq\left(\frac{1}{1-\epsilon}\right)^{2m}.$$

Since none of the $\xi_{j,n}$, $j=1,\ldots,m$ can come close to
$I^{-1}$, $\Arg(1-t\overline\xi_{j,n})$ is a smooth function on $I$ whose 
derivative ${\rm Im}\xi_{j,n}/(1-t\overline\xi_{j,n})$ is uniformly bounded 
there independently of $j$ and $n$. 
Then the rest of the assumptions on $\{\omega_n^*\}$ follows from 
the representation
$${\rm arg}(\omega_n^*(t))={\rm arg}(\omega_n(t))-2\sum_{j=1}^m\Arg(1-t\overline\xi_{j,n}).$$
This completes the proof of the theorem.
\end{proof}

The forthcoming lemma is needed for the proof of Theorem \ref{thm:polesCriticalPoints}. Recall that $\mathcal{H}$, a family of functions analytic in some fixed domain of the complex plane, is called {\it normal} if each sequence of functions from $\mathcal{H}$ contains a locally uniformly convergent subsequence.

\begin{lemma}
\label{lem:normalWeight}
Let $p\in(2,\infty]$ and $\{g_n\}_{n\in\N}$ be a sequence of a sequence of 
irreducible critical points of order $n$ of $\pma(p)$ for $F$ given by (\ref{eq:mainFun}) and (\ref{defQs}) with 
$\mes$ satisfying\footnote{Note that we do not require the hypothesis $\mes\in\CoM$ to hold. It is sufficient for the lemma to hold to have a measure with an argument of bounded variation and infinitely many points in the support.} (\ref{eq:mesDecomp}) and (\ref{boundedvarphi}). 
Further, let $v_n$ be an associated singular vector to $g_n$ with inner-outer factorization given by $v_n=b_n\cdot w_n$ for some Blaschke product $b_n$ 
and $w_n$ an outer function in $H^{p^\prime}$. Then the families $\W:=\{w_n\}$ and $\W_{p^\prime}:=\left\{w_n^{p^\prime/2}\right\}$ are normal in $\D$ and $\overline\C\setminus\wspp^*$ respectively, where $\wspp^*$ denotes the reflection of $\wspp$ across $\T$. Moreover, any limit point of $\W$ is zero free in $\D$. 
\end{lemma}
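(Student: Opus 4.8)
The plan is to exploit the fact that $w_n$ is an outer function of unit norm in $H^{p'}$, so normality in $\D$ reduces to a uniform local bound on $\log|w_n|$ from above, i.e.\ to controlling the Poisson integral of $\log|w_n|$ on the circle. Since $w_n$ is outer, $\log|w_n(z)|$ equals the Poisson integral of $\log|w_n|$ on $\T$; because $\|w_n\|_{p'}=1$, Jensen's inequality already gives $\int_\T\log|w_n|\,|d\xi|/2\pi\le 0$, which bounds $\log|w_n|$ from \emph{above} on compact subsets of $\D$ as soon as we can bound $\int_\T(\log|w_n|)^+\,|d\xi|$ — and that positive part is controlled by $\|w_n\|_{p'}^{p'}=1$ via $t^+\le t^{p'}/p'+C$. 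Hence $\{w_n\}$ is locally uniformly bounded above in $\D$, so by Montel's theorem the family $\W=\{w_n\}$ is normal there; moreover, since each $w_n$ is outer it is zero-free in $\D$, and by Hurwitz's theorem any locally uniform limit is either zero-free or identically zero — but the latter is excluded because the subharmonic functions $\log|w_n|$ cannot all tend to $-\infty$ locally uniformly while satisfying the two-sided mass constraint (equivalently, one uses the lower bound coming from the fact that the $L^{p'}$-norm is exactly $1$, not merely $\le1$, to keep $\log|w_n(z_0)|$ from diverging to $-\infty$ at a fixed point). This gives the last assertion.

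For the family $\W_{p'}=\{w_n^{p'/2}\}$ in $\overline\C\setminus\wspp^*$, the key point is the representation coming from the operator equation. By (\ref{eq:decompHankel}), for $p>2$ we have on $\T$
\[
A_F(v_n)(\xi)=\gamma_n^{1/2}\bigl(b_n j_n w_n^{p'/2}\bigr)^\sigma(\xi),
\]
where $h^\sigma(z)=z^{-1}\overline{h(1/\bar z)}$, so $\bigl(b_n j_n w_n^{p'/2}\bigr)^\sigma=\gamma_n^{-1/2}A_F(v_n)=\gamma_n^{-1/2}\Pj_-(Fv_n)$. The function $F v_n$ is analytic off $\wspp$, and applying $\Pj_-$ produces a function in $\bar H^2_0$ whose analytic continuation across $\T$ is governed by the singularities of $F$, namely $\wspp$; after applying $\sigma$ these become $\wspp^*$. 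Concretely, $\gamma_n^{1/2}\bigl(b_nj_nw_n^{p'/2}\bigr)(z)$ extends holomorphically to $\overline\C\setminus\wspp^*$ (this is the standard "$F$ analytic near $\T$ $\Rightarrow$ the Hankel image extends" argument, cf.\ the discussion following (\ref{eq:decompHankel})). Since $b_n j_n$ is inner, $|b_n j_n|\le1$ in $\D$ hence $|(b_nj_n)^\sigma|\le1$ outside $\overline\D$; thus on compact subsets of $\overline\C\setminus(\overline\D\cup\wspp^*)$ the functions $\gamma_n^{1/2}w_n^{p'/2}$ are bounded by $\gamma_n^{1/2}$ times the (locally uniformly bounded) Hankel images, while inside $\D$ one invokes the already-established normality of $\W$ together with boundedness of $\gamma_n$ (which follows from $\gamma_n\le\sigma_0(A_F)^2<\infty$, or from the convergence $\sigma_n(A_F)\to0$ for best approximants). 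Normalizing out $\gamma_n$ if necessary and passing to a subsequence where $\gamma_n$ converges, Montel's theorem then yields normality of $\W_{p'}$ on $\overline\C\setminus\wspp^*$.

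The main obstacle is the second family: one must verify carefully that $b_n j_n w_n^{p'/2}$, a priori defined only as a boundary function on $\T$ via (\ref{eq:decompHankel}), really does extend holomorphically to all of $\overline\C\setminus\wspp^*$, and that the resulting extensions are locally uniformly bounded there uniformly in $n$. The extension itself is not automatic because $g_n$ need not be rational when $p>2$, so the poles of $g_n$ (and their reflections) are a potential source of extra singularities; the resolution is that $w_n^{p'/2}$ is outer hence its only possible singular behaviour outside $\D$ comes through the factor $A_F(v_n)$, whose singularities are exactly those of $F$, i.e.\ $\wspp$, reflected to $\wspp^*$ — the inner factor $b_nj_n$ only contributes zeros, not poles, so $w_n^{p'/2}=\gamma_n^{-1/2}(b_nj_n)^{-\sigma}A_F(v_n)^\sigma$ could in principle blow up at the zeros of $(b_nj_n)^\sigma$, but those lie in $\overline\C\setminus\D$ only where $b_nj_n$ vanishes in $\D$, i.e.\ at reflected poles of $g_n$, which accumulate only on $\wspp$ by Corollary \ref{cor:numZeros}; hence on a fixed compact set in $\overline\C\setminus(\overline\D\cup\wspp^*)$ they are absent for $n$ large. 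Getting this book-keeping right — and obtaining the uniform (in $n$) bound on the extended Hankel images, which rests on $F$ being fixed and analytic in a fixed neighbourhood of $\T$ — is the technical heart of the argument; everything else is Montel plus the elementary estimates on outer functions above.
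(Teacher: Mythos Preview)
Your argument has a genuine gap at the most delicate point: the claim that any limit point of $\W$ is zero-free in $\D$. You assert this follows because ``the $L^{p'}$-norm is exactly $1$, not merely $\le1$'', but that is false in general. Outer functions of unit $H^{p'}$-norm can tend to zero locally uniformly in $\D$: take $|w_n|$ equal to a large constant on an arc of length $1/n$ and to $e^{-n}$ elsewhere, normalized so that $\|w_n\|_{p'}=1$; then $\int_\T\log|w_n|\,|d\xi|/2\pi\to-\infty$ and, since $w_n$ is outer, $w_n(0)\to0$. The paper itself flags this immediately after the lemma: ``the normality of $\mathcal{W}$ in $\D$ was clear beforehand by the Cauchy formula, since $\|w_n\|_{p'}=1$, but the nonzeroing of every limit point was not.'' Establishing the nonzeroing genuinely requires the critical-point equation and the specific form of $F$; the paper's argument (deferred to \cite{BS02,BMSW06,thYat}) works with the auxiliary function $G_n(z):=\overline{(b_nQ_sw_n^{1/2})(\bar z)}$ and uses the orthogonality relations together with a polynomial of the form $TQ_s^2$ (with $T$ as in Lemma~\ref{lem:aux}(c)) to manufacture a uniform lower bound on $|w_n|$; this is precisely where the bounded-variation hypothesis on the argument of $\mes$ enters.

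Your treatment of $\W_{p'}$ in $\overline\C\setminus\wspp^*$ has two further problems. First, you invoke Corollary~\ref{cor:numZeros} to localize the zeros of $b_n$ near $\spp$, but that corollary needs $V(\arg w_n,\spp)$ bounded independently of $n$, and in the paper this bound is deduced \emph{from} the present lemma (see the proof of Theorem~\ref{thm:polesCriticalPoints}); so the appeal is circular. Second, you do not control the inner factor $j_n$ in (\ref{eq:decompHankel}): a priori $j_n$ could carry a singular inner part or infinitely many zeros accumulating on $\T$, and your formula $w_n^{p'/2}=\gamma_n^{-1/2}(b_nj_n)^{-1}\bigl(A_F(v_n)\bigr)^\sigma$ would then fail to extend across $\T$. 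Ruling this out again requires the lower-bound machinery alluded to above, not soft Montel-type arguments alone.
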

\begin{proof} The main idea of the proof was given in \cite[Thm. 10.1]{BS02}. The necessary modification for the case of complex measures with argument of bounded variation were given in \cite[Prop. 6.3]{BMSW06}. Only a simple adjustment is needed in the present case where the approximated function may have polar singularities inside $\D$. Namely, after the initial choice of a polynomial $T$ in \cite[Prop. 6.3, Eq. 6.12]{BMSW06} has been made, one systematically applies the arguments in this proposition with $TQ^2$ instead of $T$. Another modification that one has to introduce is to define $G_n$ in \cite[Thm. 10.1]{BS02} as
$$G_n(z) := \overline{(b_nQw_n^{1/2})(\bar z)},$$
in order to obtain the desired bound in \cite[Eq. 6.13]{BMSW06}. The interested reader can find the full proof of this lemma in \cite[Lem. 3.11]{thYat}. 
\end{proof}

Note that in the previous lemma the normality of $\mathcal{W}$ in $\D$ was clear beforehand by the Cauchy formula, since $\|w_n\|_{p'}=1$, but the nonzeroing of every limit point was not.

\begin{proof}[Proof of Theorem \ref{thm:polesCriticalPoints}] Let $v_n$ be a singular vector associated to $g_n$ with inner-outer factorization given by $v_n=b_n\cdot w_n$ for some Blaschke product $b_n=q_n/\widetilde q_n$ and some outer function $w_n\in H^{p^\prime}$, $\|w_n\|_{p^\prime}=1$, where $w_n\equiv1$ when $p=2$. The poles of $g_n$ are exactly the zeros of $q_n$. Moreover, $\{q_n\}$ is a sequence of polynomials satisfying weighted non-Hermitian orthogonality relations (\ref{eq:orthRel}). Thus, the assertion of the theorem will follow from Theorem \ref{thm:weakLim} if only $\W=\{w_n\}$ is uniformly bounded above and below on $I$, the convex hull of $\spp$, and if it is a family of functions whose arguments are smooth with uniformly bounded derivatives on $I$. In the case $p=2$ this is trivial since each $w_n\equiv1$. In the case $p\in(2,\infty]$ Lemma \ref{lem:normalWeight} says that $\W$ is a normal family. Thus, it is uniformly bounded above on $I$. Moreover, since all limit points of $\W$ are zero free in $\D$, this family is uniformly bounded below on $I$ (in fact on any compact subset of $\D$). Further, the derivatives again form a normal family and so does the logarithmic derivative $w_n^\prime/w_n$ in $\D$. Since the imaginary part of the latter is equal to $d~{\rm arg}(w_n)/dt$ on $I$, we see that the desired conditions on $\W$ are satisfied.
\end{proof}

Before we prove Theorem \ref{thm:convergenceCapacity} we shall need one auxiliary lemma.
\begin{lemma}
\label{lem:TwoConstant}
Let $D$ be a domain in $\overline\C$ with non-polar boundary, $K^\prime$ be a compact set in $D$, and $\{u_n\}$ be a sequence of subharmonic functions in $D$ such that
\[
u_n(z) \leq M -\epsilon_n, \;\;\; z\in D,
\]
for some constant $M$ and a sequence $\{\epsilon_n\}$ of positive numbers decaying to zero. Further, assume that there exist a compact set $K^\prime$ and positive constants $\epsilon^\prime$ and $\delta^\prime$, independent of $n$, for which holds
\[
u_n(z) \leq M - \epsilon^\prime, \;\;\; z\in K_n\subset K^\prime, \;\;\; \cp(K_n)\geq\delta^\prime.
\]
Then for any compact set $K\subset D\setminus K^\prime$ there exists a positive constant $\epsilon(K)$ such that
\[
u_n(z) \leq M - \epsilon(K), \;\;\; z\in K,
\]
for all $n$ large enough. 
\end{lemma}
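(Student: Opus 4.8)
The plan is to exploit the two-constant theorem (a.k.a. the theorem on harmonic majorants, \cite[Thm. 4.3.7]{Ransford}) in the same spirit as in the proof of Theorem \ref{thm:weakLim}, but quantitatively tracking how the hypotheses push the majorant down away from $K'$. First I would normalize: replace $u_n$ by $v_n:=M-u_n\geq 0$, which is superharmonic (minus a subharmonic function) and satisfies $v_n\geq\epsilon_n>0$ on all of $D$ and $v_n\geq\epsilon'$ on $K_n\subset K'$ with $\cp(K_n)\geq\delta'$. Actually it is cleaner to keep $u_n$ subharmonic and work with $u_n-M\leq 0$; the goal is a uniform negative bound on a compact $K\subset D\setminus K'$.

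The key device is harmonic measure. Set $D':=D\setminus K'$ (or rather $D\setminus K_n$); since $u_n-M\leq 0$ on $D$ and $u_n-M\leq -\epsilon'$ on $K_n$, I would like to say that on the ``reduced'' domain $D\setminus K_n$ the subharmonic function $u_n-M$ is majorized by the harmonic function $-\epsilon'\,\omega_{D\setminus K_n}(\cdot,K_n)$, where $\omega_{D\setminus K_n}(\cdot,K_n)$ is the harmonic measure of the boundary piece $K_n$ in the domain $D\setminus K_n$. This is exactly the two-constant estimate: on $\partial(D\setminus K_n)$ one has $u_n-M\leq 0$ on the part coming from $\partial D$ and $u_n-M\leq-\epsilon'$ on $K_n$, hence $u_n-M\leq -\epsilon'\omega_{D\setminus K_n}(z,K_n)$ throughout. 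Then the whole problem reduces to producing a \emph{uniform} lower bound $\omega_{D\setminus K_n}(z,K_n)\geq\kappa>0$ for $z\in K$, valid for every admissible $K_n$, i.e. for every compact $K_n\subset K'$ with $\cp(K_n)\geq\delta'$.

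This last point — the uniform lower bound on harmonic measure over the family of all ``fat'' compact subsets of $K'$ — is what I expect to be the main obstacle, though it is a known type of estimate. The idea is: harmonic measure $\omega_{D\setminus K_n}(z,K_n)$ is monotone in $K_n$, and a compact set of capacity at least $\delta'$ contained in the fixed compact $K'$ ``occupies a definite portion'' of any comparison disk; one converts $\cp(K_n)\geq\delta'$ and $K_n\subset K'\Subset D$ into a lower bound for the harmonic measure by a comparison with the Green function of a fixed larger domain $D''$ with $K'\Subset D''\Subset D$, using that the Green equilibrium potential $U_{D''}^{\omega_{K_n,D''}}$ (equivalently $\omega_{D''\setminus K_n}(\cdot,K_n)$) is bounded below on $\partial D''$ in terms of the Green capacity of $K_n$ relative to $D''$, which in turn is bounded below in terms of $\cp(K_n)\geq\delta'$ since $K'\Subset D''$. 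One may alternatively cite the Frostman-type lower bound or the maximum principle comparison: $\omega_{D\setminus K_n}(\cdot,K_n)\geq\omega_{D''\setminus K_n}(\cdot,K_n)$ on $D''\setminus K_n$, and for the latter a quantitative Wiener-type estimate gives a bound depending only on $\delta'$, $K'$, and $D''$. Having fixed such $D''$ once and for all (independently of $n$), I obtain $\kappa=\kappa(K,K',D)>0$ with $\omega_{D\setminus K_n}(z,K_n)\geq\kappa$ for all $z\in K$ and all admissible $K_n$; then $u_n(z)\leq M-\epsilon'\kappa$ on $K$ for all $n$, so $\epsilon(K):=\epsilon'\kappa$ works.

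A couple of bookkeeping remarks I would include. The hypothesis $u_n\leq M-\epsilon_n$ on all of $D$ (as opposed to merely $\leq M$) is what guarantees the subharmonic function $u_n-M$ has the right sign on the $\partial D$-portion of the boundary of $D\setminus K_n$ even in the limiting/boundary sense, and it is harmless that $\epsilon_n\to 0$ since we only use $\epsilon_n\geq 0$; so in fact the conclusion holds with $\epsilon(K)=\epsilon'\kappa$ not involving $\epsilon_n$. One must also check the degenerate case where $K_n$ might be polar — but this is excluded by $\cp(K_n)\geq\delta'>0$. Finally, the non-polarity of $\partial D$ ensures $D$ is Greenian and the harmonic measures in play are genuine, non-trivial measures, so all the potential-theoretic objects above are well defined. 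This completes the outline.
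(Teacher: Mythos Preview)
Your overall strategy coincides with the paper's: apply the two-constant theorem on $D\setminus K_n$ to obtain $u_n(z)-M\leq -\epsilon'\,\omega_{D\setminus K_n}(z,K_n)$ (your observation that the $\epsilon_n$ can be absorbed is correct and slightly sharper than the paper's bookkeeping), and then reduce everything to a uniform lower bound $\omega_{D\setminus K_n}(z,K_n)\geq\kappa>0$ for $z\in K$, valid over all compacts $K_n\subset K'$ with $\cp(K_n)\geq\delta'$. The paper proceeds exactly this way.

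Where your outline goes wrong is the argument for that uniform lower bound. You write that $\omega_{D''\setminus K_n}(\cdot,K_n)$ is ``bounded below on $\partial D''$'': this is false, since harmonic measure of $K_n$ in $D''\setminus K_n$ vanishes at every regular point of $\partial D''$. The comparison $\omega_{D\setminus K_n}\geq\omega_{D''\setminus K_n}$ on $D''\setminus K_n$ is correct, but as stated it is useless on $\partial D''$, and in any case your choice $K'\Subset D''\Subset D$ does not place $K$ inside $D''$, so the inequality is not even available at the points where you need it. If you want to salvage the direct route, take $D''$ large enough that $K\cup K'\Subset D''\Subset D$, use the identity $\omega_{D''\setminus K_n}(z,K_n)=\cp(K_n,\partial D'')\,U_{D''}^{\mu_n}(z)$ with $\mu_n$ the Green equilibrium measure on $K_n$ relative to $D''$, bound $U_{D''}^{\mu_n}(z)\geq\min_{(z,t)\in K\times K'}g_{D''}(z,t)>0$, and finally bound $\cp(K_n,\partial D'')$ below in terms of $\cp(K_n)\geq\delta'$ via the fact that $g_{D''}(z,t)+\log|z-t|$ is bounded on $K'\times K'$.

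The paper instead argues the lower bound by contradiction: assuming $\omega_{D\setminus K_n}(z_n,K_n)\to0$ along some $z_n\in K$, it uses the representation $\omega_{D\setminus K_n}(\cdot,K_n)=\cp(K_n,\partial D)\,U_D^{\mu_{(K_n,\partial D)}}$, extracts a weak$^*$ limit $\mu$ of the Green equilibrium measures (all supported in $K'$), observes that $U_D^{\mu_{(K_n,\partial D)}}(z_n)\to U_D^{\mu}(z^*)>0$ by equicontinuity of $\{g_D(\cdot,t)\}_{t\in K'}$ on $K$, concludes $\cp(K_n,\partial D)\to0$, and then derives $\cp(K_n)\to0$ from the boundedness of $g_D(z,t)+\log|z-t|$ on $K'\times K'$, contradicting $\cp(K_n)\geq\delta'$. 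Either this argument or the repaired direct one above would complete your proof; what you currently have for this step does not.
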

\begin{proof} Let $\omega_n$ be the harmonic measure for $D_n := D\setminus K_n$. Then the two-constant theorem \cite[Thm. 4.3.7]{Ransford} yields that
\begin{eqnarray}
u_n(z) &\leq& (M - \epsilon^\prime)\omega_n(z,K_n) + (M-\epsilon_n)(1-\omega_n(z,K_n)) \nonumber \\
{}     &\leq& M - (\epsilon^\prime-\epsilon_n)\omega_n(z,K_n), \;\;\; z\in D_n. \nonumber
\end{eqnarray}
Thus, we need to show that for any $K\subset D\setminus K^\prime$ there exists a constant $\delta(K)>0$ such that
\[
\omega_n(z,K_n) \geq \delta(K), \;\;\; z\in K.
\]
Assume to the contrary that there exists a sequence of points $\{z_n\}_{n\in\N_1}\subset K$, $\N_1\subset\N$, such that
\begin{equation}
\label{eq:contrary}
\omega_n(z_n,K_n) \to 0 \;\; \mbox{as} \;\; n\to\infty, \;\; n\in\N_1.
\end{equation}
By \cite[Theorem 4.3.4]{Ransford}, $\omega_n(\cdot,K_n)$ is the unique bounded harmonic function in $D_n$ such that
\[
\lim_{z\to\zeta} \omega(z,K_n) = 1_{K_n}(\zeta)
\]
for any regular $\zeta\in\partial D_n$, where $1_{K_n}$ is the characteristic function of $\partial K_n$. Then it follows from (\ref{eq:GreenEqual}) of the appendix that
\begin{equation}
\label{eq:HarmEqPot}
\cp(K_n,\partial D) U_D^{\mu_{(K_n,\partial D)}} \equiv \omega_n(\cdot,K_n),
\end{equation}
where $\mu_{(K_n,\partial D)}$ is the Green equilibrium measure on $K_n$ relative to $D$. Since all the measures $\mu_{(K_n,\partial D)}$ are supported in the compact set $K^\prime$, there exists a probability measure $\mu$ such that
\[
\mu_{(K_n,\partial D)} \cws \mu \;\; \mbox{as} \;\; n\to\infty, \;\; n\in\N_2\subset\N_1.
\]
Without loss of generality we may suppose that $z_n\to z^*\in K$ as $n\to\infty$, $n\in\N_2$. Let, as usual, $g_D(\cdot,t)$ be the Green function for $D$ with pole at $t\in D$. Then, by the uniform equicontinuity of $\{g_D(\cdot,t)\}_{t\in K^\prime}$ on $K$, we get
\[
U_D^{\mu_{(K_n,\partial D)}}(z_n) \to U_D^{\mu}(z^*) \neq 0 \;\; \mbox{as} \;\; n\to\infty, \;\; n\in\N_2.
\]
Therefore, (\ref{eq:contrary}) and (\ref{eq:HarmEqPot}) necessarily mean that
\begin{equation}
\label{eq:contrary2}
\cp(K_n,\partial D) \to 0 \;\; \mbox{as} \;\; n\to\infty, \;\; n\in\N_2.
\end{equation}
By definition, $1/\cp(K_n,\partial D)$ is the minimum among Green energies of probability measures supported on $K_n$. Thus, the sequence of Green energies of the logarithmic equilibrium measures on $K_n$, $\mu_{K_n}$, diverges to infinity by (\ref{eq:contrary2}). Moreover, since 
\[
\left\{g(\cdot,t)+\log|\cdot-t|\right\}_{t\in K^\prime}
\]
is a family of harmonic functions in $D$ whose moduli are uniformly bounded above on $K^\prime$, the logarithmic energies of $\mu_{K_n}$ diverge to infinity. In other words,
\[
\cp(K_n) \to 0 \;\; \mbox{as} \;\; n\to\infty, \;\; n\in\N_2,
\]
which is impossible by the initial assumptions. This proves the lemma.
\end{proof}

\begin{proof}[Proof of Theorem \ref{thm:convergenceCapacity}] To prove the convergence in capacity we first establish an integral representation for the error $(F-g_n)$. As usual we denote by $v_n=b_nw_n$ a singular vector associated to $g_n$, where $b_n$ is a Blaschke product of degree $n$ and $w_n$ is an outer function. By (\ref{eq:DefCrPt}) and Fubini-Tonelli's theorem, we have for $z\in\D\setminus\wspp$
\begin{eqnarray}
\label{eq:errorF1}
r_n(z) &:=& (F-g_n)(z)=\frac{A_F(v_n)(z)}{v_n(z)} = \frac{\Pj_-(Fv_n)(z)}{v_n(z)} \nonumber \\
{} &=& \frac{1}{v_n(z)}\int_\T\frac{(Fv_n)(\xi)}{z-\xi}\frac{d\xi}{2\pi i} = \frac{1}{v_n(z)}\int_\T\int\frac{v_n(\xi)}{(z-\xi)(\xi-t)}d\wmes(t)\frac{d\xi}{2\pi i}\nonumber \\
{} &=& \frac{1}{v_n(z)}\int\frac{v_n(t)}{z-t}d\wmes(t) = \frac{\widetilde q_n(z)}{q_n(z)w_n(z)}\int\frac{q_n(t)}{z-t}\,\frac{w_n(t)}{\widetilde q_n(t)}d\wmes(t).
\end{eqnarray}
In another connection, the orthogonality relations (\ref{eq:orthRel}) yield
$$\int\frac{\widetilde q_n(z)-\widetilde q_n(t)}{z-t}~q_n(t)~\frac{w_n(t)}{\widetilde q_n^2(t)}d\wmes(t)=0.$$
This, in turn, implies that (\ref{eq:errorF1}) can be rewritten as
\begin{equation}
\label{eq:intermidForm}
r_n(z)=\frac{\widetilde q_n^2(z)}{q_n(z)w_n(z)}\int\frac{q_n(t)}{z-t}\frac{w_n(t)}{\widetilde q_n^2(t)}d\wmes(t), \;\;\; z\in\D\setminus\wspp.
\end{equation}
Since the majority of the zeros of $q_n$ approach $\spp$ by Corollary \ref{cor:numZeros}, we always can choose $s$ of them, say $\xi_{1,n},\ldots,\xi_{s,n}$, in such a manner that the absolute values of
\begin{equation}
\label{eq:Def1}
l_{s,n}(z):=\prod_{j=1}^s(z-\xi_{j,n}) \;\;\; \mbox{and} \;\;\; \widetilde l_{s,n}(z):= z^s\overline{l_{s,n}(1/\bar z)}
\end{equation}
are uniformly bounded above and below on compact subsets of $\C\setminus\spp$ and in $\overline\D$, respectively, for all $n$ large enough. Using orthogonality relations (\ref{eq:orthRel}) once again and since $Q_s$ vanishes at each $\eta\in\rspp$ with multiplicity $m(\eta)$, we can rewrite (\ref{eq:intermidForm}) as
\begin{equation}
\label{errmanip}
r_n(z) = \frac{\widetilde q^2_n(z)}{(q_nq_n^*Q_sw_n)(z)} \int\frac{(q_nq_n^*Q_sw_n)(t)}{\widetilde q_n^2(t)} \frac{d\mes(t)}{z-t},
\end{equation}
where $q_n^*(z) := \overline{q_n(\bar z)}/\overline{l_{s,n}(\bar z)}$. Set
\begin{equation}
\label{eq:Def2}
B_n(z) := \int \frac{(|q_n^*|^2Q_sl_{s,n}w_n)(t)}{\widetilde q_n^2(t)}\frac{d\mes(t)}{z-t}, \;\;\; z\in\C\setminus\spp,
\end{equation}
so that
\[
r_n(z) = \frac{\widetilde q^2_n(z)B_n(z)}{(q_nq_n^*Q_sw_n)(z)}.
\]
First, we show that 
\begin{equation}
\label{eq:ConvIn}
|B_n|^{1/2n}\cic\exp\{-1/\cp(\spp,\T)\}
\end{equation}
on compact subsets of $\C\setminus\spp$. Denote, as usual,  $b_n=q_n/\widetilde q_n$. Then for any compact set $K\subset\C\setminus\spp$ there exists a constant $c(K)$, independent of $n$, such that
\begin{equation}
\label{eq:UpperBoundIn}
|B_n(z)| \leq c(K)\|b_n\|_\spp^2, \;\;\; z\in K,
\end{equation}
by the choice of $l_{s,n}$ and Lemma \ref{lem:normalWeight}.  Let $\nu_n$ be the counting measures of zeros of $b_n$. Then
\begin{eqnarray}
\limsup_{n\to\infty}\left|b_n(t)\right|^{1/n} &=& \limsup \exp\left\{-U^{\nu_n}_\D(t)\right\} = \exp\left\{- \liminf U^{\nu_n}_\D(t)\right\} \nonumber \\
\label{eq:LET}
{} &=& \exp\left\{-U^{\ged}_\D(t)\right\} = \exp\{-1/\cp(\spp,\T))\} \;\; \mbox{for q.e.} \;\; t\in\spp
\end{eqnarray}
by Theorem \ref{thm:polesCriticalPoints}, the lower envelope theorem \cite[Thm. I.6.9]{SaffTotik}, and (\ref{eq:GreenEqual}) of the appendix. Moreover, by the principle of descent \cite[Thm. I.6.8]{SaffTotik}, we get that
\begin{equation}
\label{eq:POD}
\limsup_{n\to\infty}\left|b_n(t)\right|^{1/n} \leq \exp\{-1/\cp(\spp,\T)\}
\end{equation}
uniformly on $\spp$. It is immediate from (\ref{eq:LET}) and (\ref{eq:POD}) that, in fact,
\begin{equation}
\label{eq:LimitNorms}
\lim_{n\to\infty}\left\|b_n\right\|_\spp^{1/n} = \exp\{-1/\cp(\spp,\T)\}.
\end{equation}
Suppose now that (\ref{eq:ConvIn}) is false. Then there would exist a compact set $K^\prime\subset\C\setminus\spp$ and $\epsilon^\prime>0$ such that
\begin{equation}
\label{eq:ContrIn}
\cp\left\{z\in K^\prime:~\left||B_n(z)|^{1/2n}-\exp\{-1/\cp(\spp,\T)\}\right|\geq\epsilon^\prime\right\} \not\to 0.
\end{equation}
Combining (\ref{eq:ContrIn}), (\ref{eq:LimitNorms}), and (\ref{eq:UpperBoundIn}) we see that there would exist a sequence of compact sets $K_n\subset K^\prime$, $\cp(K_n)\geq\delta^\prime>0$, such that
\begin{equation}
\label{eq:FalseUpperBoundIn}
|B_n(z)|^{1/2n} \leq \exp\{-1/\cp(\spp,\T)\} - \epsilon^\prime, \;\;\; z\in K_n,
\end{equation}
for all $n$ large enough. Now, let $\Gamma$ be a closed Jordan curve that separates $\spp$ from $K^\prime$ and contains $\spp$ in the bounded component of its complement. Observe that $(1/2n)\log|B_n|$ is a subharmonic function in $\C\setminus\spp$. Then (\ref{eq:UpperBoundIn}), (\ref{eq:LimitNorms}), and (\ref{eq:FalseUpperBoundIn}) enable us to apply Lemma \ref{lem:TwoConstant} with $M=-1/\cp(\spp,\T)$ which yields that there exists $\epsilon(\Gamma)>0$ such that
\begin{equation}
\label{eq:FalseOnGamma}
|B_n(z)|^{1/2n} \leq \exp\{-1/\cp(\spp,\T)-\epsilon(\Gamma)\}
\end{equation}
uniformly on $\Gamma$ and for all $n$ large enough. Define
\[
J_n : = \left|\int_\Gamma T_l^2(z)T(z)\overline{l_{s,n}(\bar z)}B_n(z)\frac{dz}{2\pi i}\right|,
\]
where the polynomials $T_l$ and $T$ are chosen as in Theorem \ref{thm:polesCriticalPoints} (see discussion after (\ref{eq:fromBelow})). Then if the limit in (\ref{eq:ConvIn}) would not hold, we would get (\ref{eq:FalseOnGamma}) and subsequently
\begin{equation}
\label{eq:ContrPart1}
\limsup_{n\to\infty} J_n^{1/2n} \leq \exp\{-1/\cp(\spp,\T)-\epsilon(\Gamma)\}.
\end{equation}
In another connection, the Cauchy integral formula yields that
\begin{eqnarray}
J_n &=& \left|\int_\Gamma T_l^2(z)T(z)\overline{l_{s,n}(\bar z)}\left(\int\frac{(|q_n^*|^2Q_sl_{s,n}w_n)(t)}{\widetilde q_n^2(t)}\frac{d\mes(t)}{z-t}\right) \frac{dz}{2\pi i}\right| \nonumber \\
\label{eq:BelowOnGamma1}
{} &=& \left|\int|q_n^2(t)|\frac{T_l^2(t)}{\widetilde q_n^2(t)}(TQ_sw_n)(t)e^{i\arm(t)}d|\mes|(t)\right|. 
\end{eqnarray}
Exactly as in (\ref{eq:bb1}), we can write
\begin{equation}
\label{eq:BelowOnGamma2}
\re\left(\frac{(T_l^2TQ_sw_n)(t)e^{i\arm(t)}}{\widetilde q_n^2(t)}\right) \geq \sin(\delta)\left|\frac{(T_lTQ_sw_n)(t)}{\widetilde q_n^2(t)}\right|, \;\;\; t\in I,
\end{equation}
where $I$ is the convex hull of $\spp$ and $\delta>0$ has the same meaning as in Theorem \ref{thm:polesCriticalPoints} (see construction after (\ref{eq:boundBlaschke})). Thus, we derive from (\ref{eq:BelowOnGamma1}) and (\ref{eq:BelowOnGamma2}) that
\begin{equation}
\label{eq:BelowOnGamma3}
J_n \geq \sin(\delta) \int |b_n^2(t)| ~ |(T_l^2TQ_sw_n)(t)| d|\mes|(t).
\end{equation}
Let $\spp_0$ be a closed subset of $\spp$ of positive capacity that lies at positive distance from the zeros of $TQ_s$ on $I$ (see Theorem \ref{thm:polesCriticalPoints} for the existence of this set). Further, let  $x_n\in\spp_0$ be such that
\[
\|b_n\|_{\spp_0} = |b_n(x_n)|.
\]
It follows from (\ref{eq:LET}) and (\ref{eq:LimitNorms}) that
\[
\lim_{n\to\infty}\|b_n\|_{\spp_0}^{1/n} = \exp\{-1/\cp(\spp,\T)\},
\]
and therefore
\[
\|b_n\|_{\spp_0} \geq \exp\{-n(\epsilon+1/\cp(\spp,\T))\}
\]
for any $\epsilon>0$ and all $n$ large enough. Proceeding as in Theorem \ref{thm:polesCriticalPoints} (see equations (\ref{eq:circle}) and (\ref{eq:fromBelow})), we get that
\begin{equation}
\label{eq:BelowOnGamma4}
|b_n(t)| \geq \frac12\exp\{-n(\epsilon+1/\cp(\spp,\T))\}, \;\;\; t\in I_n,
\end{equation}
where
\[
I_n:=\left\{x\in\spp_0:~|x-x_n|\leq r_\delta e^{-n\delta}\right\}
\]
and $r_\delta$ is some function of $\delta$ continuous and vanishing at zero. Then by combining (\ref{eq:BelowOnGamma3}) and (\ref{eq:BelowOnGamma4}), we obtain exactly as in (\ref{eq:contrBelow}) that there exists a constant $c_1$ independent of $n$ such that
\[
J_n \geq \sin(\delta) \int_{I_n} |b_n^2(t)| ~ |(T_l^2TQ_sw_n)(t)| d|\mes|(t) \geq c_1 \exp\{-2n(\epsilon+L\delta/2+1/\cp(\spp,\T))\}.
\]
Thus, we have that
\begin{equation}
\label{eq:ContrPart2}
\liminf_{n\to\infty} J_n^{1/2n} \geq \exp\{-\epsilon-L\delta/2-1/\cp(\spp,\T)\}.
\end{equation}
Now, by choosing $\epsilon$ and $\delta$ so small that $\epsilon+L\delta < \epsilon(\Gamma)$, we arrive at contradiction between (\ref{eq:ContrPart1}) and (\ref{eq:ContrPart2}). Therefore, the convergence in (\ref{eq:ConvIn}) holds.

Second, we show that
\begin{equation}
\label{eq:ConvPolies}
\left|\frac{\widetilde q_n^2(z) l_{s,n}(\bar z)}{q_n(z)q_n(\bar z)Q_s(z)w_n(z)}\right|^{1/2n} \cic \exp\left\{U_\D^{\ged}(z)\right\}
\end{equation}
on compact subsets of $\D\setminus\spp$. Let $K\subset\D\setminus\spp$ be compact and let $U$ be a bounded conjugate-symmetric open set containing $K$ and not intersecting $\spp$. Define
\[
b_{n,1}(z) := \prod_{\zeta\in U:~b_n(\zeta)=0}\frac{z-\zeta}{1-\bar\zeta z} \;\;\; \mbox{and} \;\;\; b_{n,2}(z) := b_n(z)/b_{n,1}(z).
\]
Further, let $q_{n,1}$ and $q_{n,2}$ be the numerators of $b_{n,1}$ and $b_{n,2}$, respectively. Corollary \ref{cor:numZeros} yields that there exists fixed $m\in\N$ such that each $b_{n,1}$ has at most $m$ zeros. Then
\[
\left|b_{n,2}(z)\right|^{1/n} \to \exp\left\{-U^{\ged}_\D(z)\right\} \;\; \mbox{and} \;\;  \left|\frac{q_{n,2}(z)}{q_{n,2}(\bar z)}\right|^{1/n} \to 1
\]
uniformly on $K$ by Theorem \ref{thm:polesCriticalPoints}. Moreover, it is an immediate consequence of the choice of $l_{s,n}$, the normality of $\{w_n\}$ in $\D$ (Lemma \ref{lem:normalWeight}), the uniform boundedness of the number of zeros of $q_{n,1}(z)q_{n,1}(\bar z)Q_s(z)$, and the lemniscate theorem \cite[Thm. 5.2.5]{Ransford} that
\[
\left|\frac{\widetilde q_{n,1}^2(z)l_{s,n}(\bar z)}{q_{n,1}(z)q_{n,1}(\bar z)Q_s(z)w_n(z)}\right|^{1/2n}\cic1, \;\;\; z\in K.
\]
Thus, we obtain (\ref{eq:ConvPolies}). It is clear now that (\ref{eq:Convergence1}) follows from (\ref{errmanip}), (\ref{eq:ConvIn}), and (\ref{eq:ConvPolies}).

Let us finally fix $p=2$ and $p=\infty$. In this former case $w_n\equiv1$ for any $n\in\N$ and in the latter $\{w_n\}$ is a normal family in $\overline\C\setminus\wspp^*$ by Lemma \ref{lem:normalWeight}, and therefore $r_n(z)$ is defined everywhere outside of $\spp\cup\spp^*$. The limit in (\ref{eq:Convergence2}) easily follows from (\ref{eq:Convergence1}), (\ref{errmanip}), (\ref{eq:ConvIn}), and (\ref{eq:ConvPolies}) since $|b_n(z)|=|b_n(1/\bar z)|^{-1}$ in $\C$. It remains only to show (\ref{eq:Convergence3}) for $p=2$ (in this case the error is defined in $\overline\C\setminus\spp$). Taking into account (\ref{eq:LimitNorms}), (\ref{eq:UpperBoundIn}), and the above-mentioned symmetry, it is sufficient to prove that
\[
\limsup_{n\to\infty}|b_n(z)|^{1/n} \leq \exp\left\{-U_\D^{\ged}(z)\right\}
\]
uniformly in $\overline\D$. The latter follows by Theorem \ref{thm:polesCriticalPoints}, the principle of descent, and the following fact. Let $a\in(0,1)$ be such that $\spp\subset\D_a$ and denote by $b_{n,a}$ the Blaschke product with those zeros of $b_n$ that are contained in $\D_a$. Then $|b_n|\leq|b_{n,a}|$ in $\D$ and the zeros of $b_{n,a}$ have the same limiting distribution ($\mu_{(\spp,\T)}$) as the zeros of $b_n$ by Corollary \ref{cor:numZeros}. Thus, $|b_{n,a}|^{1/n}$ converge locally uniformly in $\{z: a<|z|<1/a\}$ to some function that coincides with $\exp\left\{-U_\D^{\ged}(z)\right\}$ on $\{z: a<|z|\leq1\}$. This finishes the proof of the theorem.
\end{proof}

\begin{proof}[Proof of Corollary \ref{cor:BestRational}] Let $\{r_n\}$ be a sequence of solutions of $\pma(2)$ (see (\ref{eq:merApprProblem})). Then $\{r_n\}$ is a sequence of rational functions with poles in $\D$ for which (\ref{eq:Convergence3}) holds. Thus,
\[
\limsup_{n\to\infty} \rho_n(F,{\mathbb E})^{1/2n} \leq \limsup_{n\to\infty} \|F-r_n\|_\T^{1/2n} \leq \exp\left\{-\frac{1}{\cp(\spp,\T)}\right\}
\]
since $F-r_n$ are analytic on $\mathbb E$. On the other hand, AAK theory implies that
\[
\liminf_{n\to\infty}\rho_n(F,{\mathbb E})^{1/2n} \geq \liminf_{n\to\infty}\left(\inf_{g\in H_n^\infty}\|F-g\|_\infty\right)^{1/2n} = \liminf_{n\to\infty} \sigma_n(A_F)^{1/2n}.
\]
It follows from (\ref{eq:DefCrPt}) and (\ref{eq:decompHankel}) that for every $g_n$, best AAK approximant of order $n$, it holds that
\[
|F-g_n| = \sigma_n(A_F) \;\;\; \mbox{a.e. on} \;\;\; \T.
\]
Therefore, we get from (\ref{errmanip}) that
\[
\sigma_n(A_F) = \left|\frac{q_n(\xi)}{q_n(\bar\xi)}\frac{l_{s,n}(\bar\xi)B_n(\xi)}{Q_s(\xi)w_n(\xi)}\right| \;\;\; \mbox{for a.e.} \;\; \xi\in\T,
\]
where $l_{s,n}$ and $B_n$ were defined in (\ref{eq:Def1}) and (\ref{eq:Def2}), respectively. Since 
\[
\left(\frac{|q_n(z)|}{|q_n(\bar z)|}\right)^{1/n} \cic 1
\] 
on compact subsets of $\overline\C\setminus\spp$ by Theorem \ref{thm:polesCriticalPoints}, we immediately deduce from (\ref{eq:ConvIn}) and Lemma \ref{lem:normalWeight} that
\[
\lim_{n\to\infty} \sigma_n(A_F)^{1/2n} = \exp\left\{-\frac{1}{\cp(\spp,\T)}\right\},
\]
which finishes the proof of the corollary.
\end{proof}

\begin{proof}[Proof of Theorem \ref{thm:tracingPoles}] Inequality (\ref{eq:LowerM}) is trivial for any $\eta\in\rspp\cap\spp$. Suppose now that $\eta\in\rspp\setminus\spp$ and that $\underline m(\eta)< m(\eta)$. This would mean that there exists an open set $U$,  $U\cap\wspp=\{\eta\}$, such that $\underline m(\eta,U)<m(\eta)$ and therefore would exist a subsequence $\N_1\subset\N$ such that
\[\#\{\spp_n\cap U\}<m(\eta), \;\;\; n\in\N_1.\]
It was proved in Theorem \ref{thm:convergenceCapacity} that $\{g_n\}$ converges in capacity on compact subsets of $\D\setminus\spp$ to $F$. Thus, $\{g_n\}_{n\in\N_1}$ is a sequence of meromorphic functions in $U$ with at most $m(\eta)$ poles there, which converges in capacity on $U$ to a meromorphic function $\left.F\right|_{U}$ with exactly one pole of multiplicity $m(\eta)$. Then by Gonchar's lemma \cite[Lemma 1]{Gon75b} each $g_n$ has exactly $m(\eta)$ poles in $U$ and these poles converge to $\eta$. This finishes the proof of (\ref{eq:LowerM}).

Now, for any $\eta\in\rspp\setminus\spp$ the upper characteristic $\overline m(\eta)$ is finite by Corollary \ref{cor:numZeros}. Therefore there exist domains $D_\eta$, $D_\eta\cap\wspp=\{\eta\}$, such that $\overline m(\eta)=\overline m(\eta,D_\eta)$, $\eta\in\rspp\setminus\spp$. Further, let $\angs(\cdot)$ be the angle function defined in (\ref{eq:agnleSys}) for a system of $m$ intervals covering $\spp$ and let $S_n=\{\xi_{1,n},\ldots,\xi_{n,n}\}$. Then by Lemma \ref{lem:anglBound} we have
\begin{equation}
\label{eq:nameless}
\sum_{j=1}^{n}(\pi-\angs(\xi_{j,n})) \leq V(\arm)+V_\W+(m+s-1)\pi+\sum_{\zeta\in\rspp}m(\zeta)\angs(\zeta),
\end{equation}
where  $V_\W$ was defined in 
(\ref{eq:boundArgW}). The finiteness of $V_\W$ was obtained in the proof of Theorem \ref{thm:polesCriticalPoints}. Then for $n$ large enough (\ref{eq:nameless}) yields
\[\sum_{\eta\in\rspp\setminus\spp}\left(\sum_{\xi_{j,n}\in D_\eta}(\pi -\angs(\xi_{j,n}))-m(\eta)(\pi-\angs(\eta))\right) \leq V,\]
where $V$ was defined in (\ref{eq:V}). Thus,
\begin{eqnarray}
\label{eq:nmless}
\sum_{\eta\in\rspp\setminus\spp}\left(\#\{S_n\cap D_\eta\} -m(\eta)\right)(\pi-\angs(\eta)) &\leq& \sum_{\eta\in\rspp\setminus\spp}\#\{S_n\cap D_\eta\}\left(\max_{\xi\in D_\eta}\angs(\xi)-\angs(\eta)\right)  \nonumber \\
{} && + V
\end{eqnarray}
for all $n$ large enough. However, since $\{\max_{n\geq N}\#\{S_n\cap D_\eta\}\}_{N\in\N}$ is a decreasing sequence of integers, $\overline m(\eta) = \overline m(\eta,D_\eta)= \#\{S_n\cap D_\eta\}$ for infinitely many $n\in\N$. Therefore, we get from (\ref{eq:nmless}) that
\begin{equation}
\label{eq:mnless}
\sum_{\eta\in\rspp\setminus\spp}\left(\overline m(\eta) -m(\eta)\right)(\pi-\angs(\eta)) \leq V + \sum_{\eta\in\rspp\setminus\spp}\overline m(\eta)\left(\max_{\xi\in D_\eta}\angs(\xi)-\angs(\eta)\right).
\end{equation}
Observe now that the left-hand side and the first summand on the right-hand side of (\ref{eq:mnless}) are simply constants. Moreover, the second summand on the right-hand side of (\ref{eq:mnless}) can be made arbitrarily small by taking smaller neighborhoods $D_\eta$. Thus, (\ref{eq:UpperM}) follows.
\end{proof}

\section{Numerical Experiments}
\label{sec:numer}

The Hankel operator $A_F$ with symbol $F\in H^\infty+C(\T)$ is of finite rank 
if and only if $F$ is a rational function \cite[Thm. 3.11]{Partington}. In practice
one can only compute with finite rank operators, due to the necessity of 
ordering the singular values, so a preliminary rational approximation to 
$F$ is needed 
when the latter is not rational. One way to handle this problem is to 
truncate the Fourier series of $F$ at some high order $N$. This provides 
us with a rational function $F_N$ that approximates $F$ in the Wiener norm 
which, in particular, dominates any $L^p$ norm on the unit circle, 
$p\in[1,\infty]$. It was proved in \cite{HTG90} that the best approximation 
operator
from $H^\infty_n$ (mapping $F$ to $g_n$ according to (\ref{eq:operatorBestAppr})) is continuous in the Wiener norm provided $(n+1)$-st singular value of the Hankel operator is simple. It was shown in \cite[Cor. 2]{BLP00} that the 
last assertion is satisfied for Hankel operators with symbols in some
 open dense subset of $H^\infty+C(\T)$, and the same technique can be used to 
prove that it is also the case for the particular subclass (\ref{eq:mainFun}).
Thus, even though the simplicity of singular values cannot be 
asserted beforehand, it is generically true. When it prevails, one can
approximates $F_N$ instead of $F$ and get a close approximation to $g_n$ 
when $N$ is large enough. This amounts to 
perform the singular value decomposition of $A_{F_N}$
(see \cite[Ch. 16]{Young}). When $2\leq p<\infty$ there is no difficulty
with continuity issues, but the computation of $g_n$ has to rely on a
numerical search.
To numerically construct rational approximants when $p=2$, we used the
above truncation technique together with the {\it Hyperion} software 
described in \cite{rGr}.

In the numerical experiments below we approximate 
function $F$ given by the formula
\begin{eqnarray}
F(z) &=& 7\int_{[-6/7,-1/8]}\frac{e^{it}dt}{z-t} - \int_{[2/5,1/2]}\frac{3+i}{t-2i}\frac{dt}{z-t} + (2-4i)\int_{[2/3,7/8]}\frac{\ln(t)dt}{z-t} \nonumber \\
&+& \frac{2}{(z+3/7-4i/7)^2} +  \frac{6}{(z-5/9-3i/4)^3} + \frac{24}{(z+1/5+6i/7)^4}. \nonumber
\end{eqnarray}

\begin{figure}[h!]
\centering
\includegraphics[scale=.45]{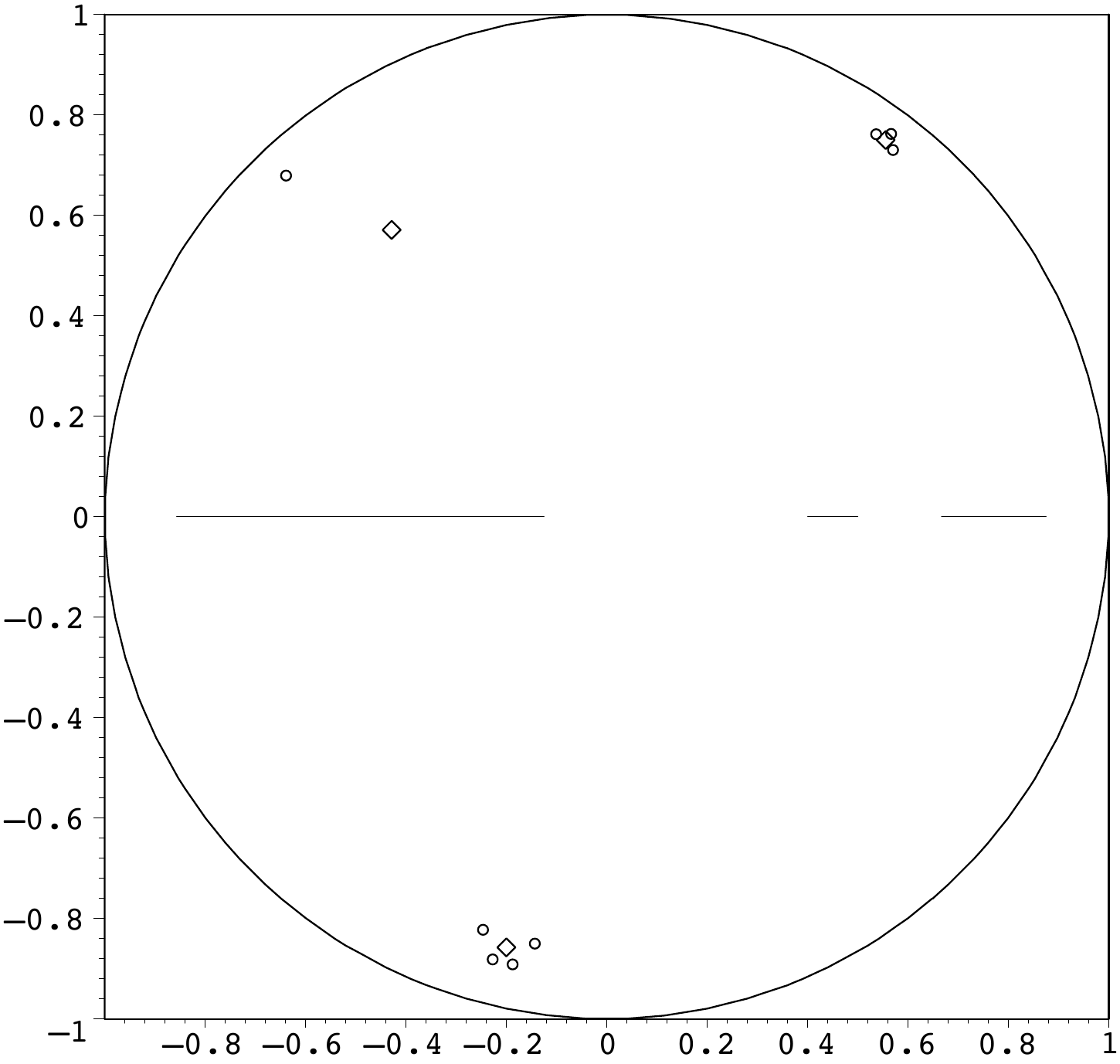}
\includegraphics[scale=.45]{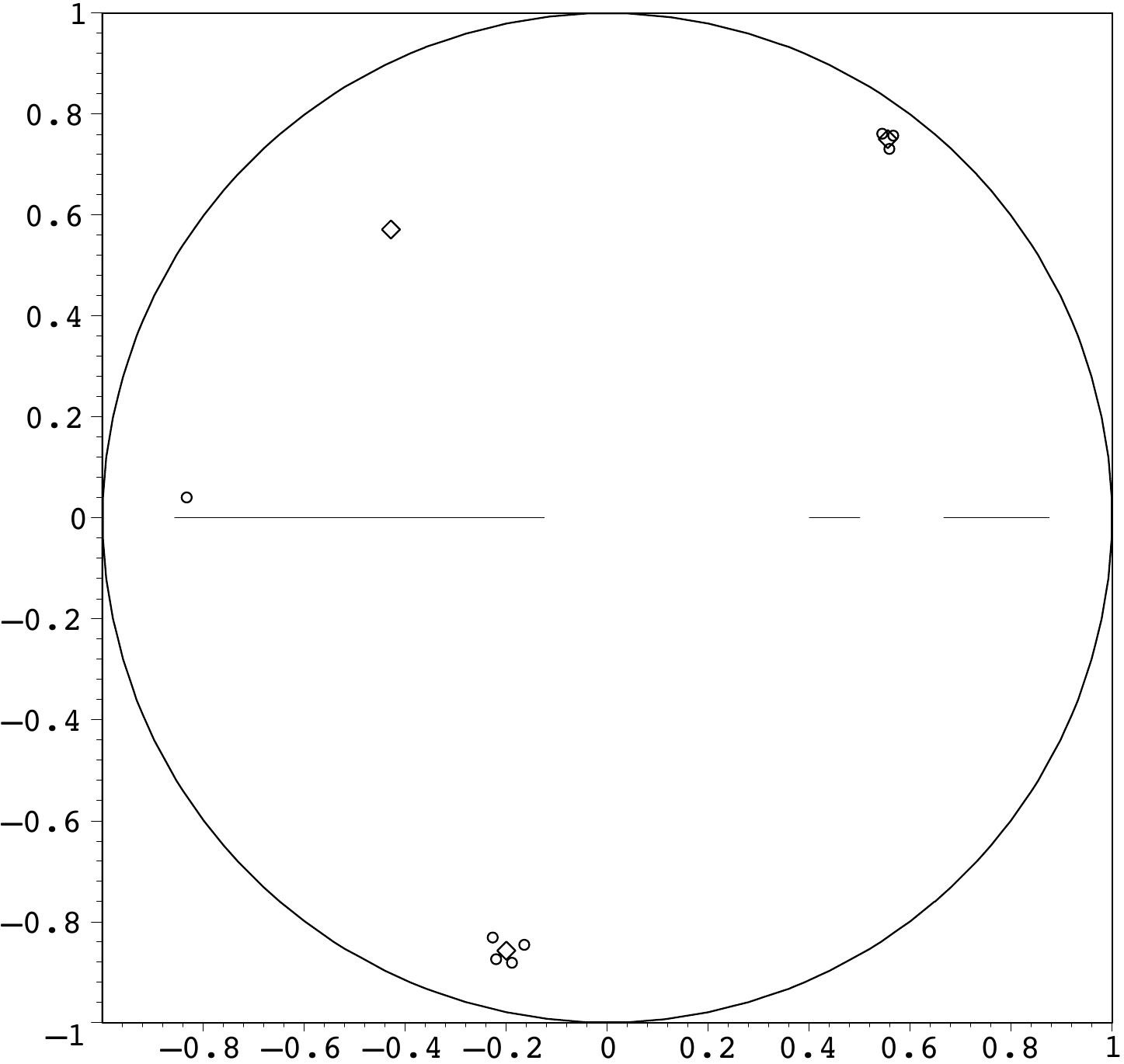}
\caption{\small AAK (left) and rational (right) approximants to $F$ of degree 8}
\end{figure}
\begin{figure}[h!]
\centering
\includegraphics[scale=.45]{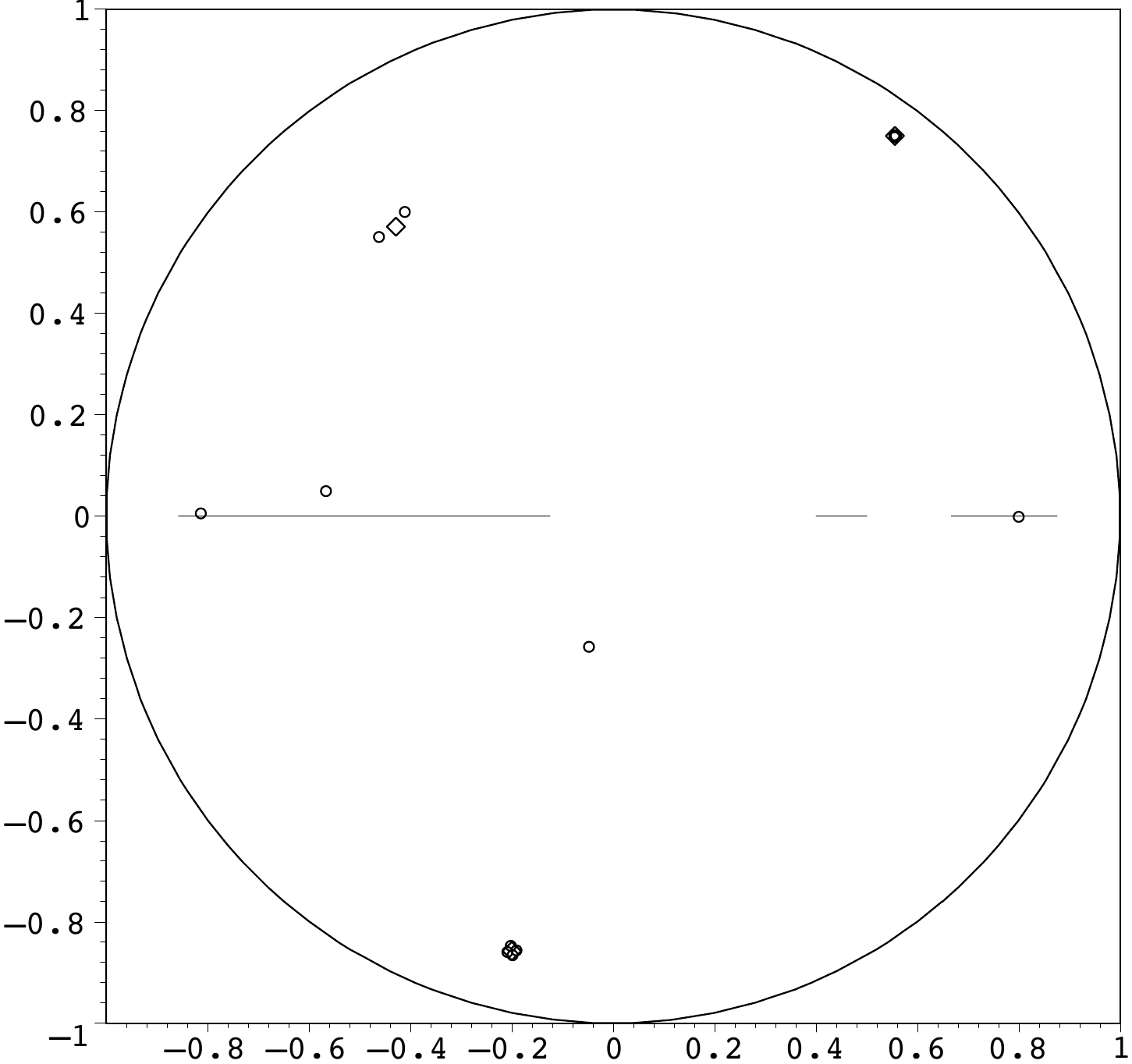}
\includegraphics[scale=.45]{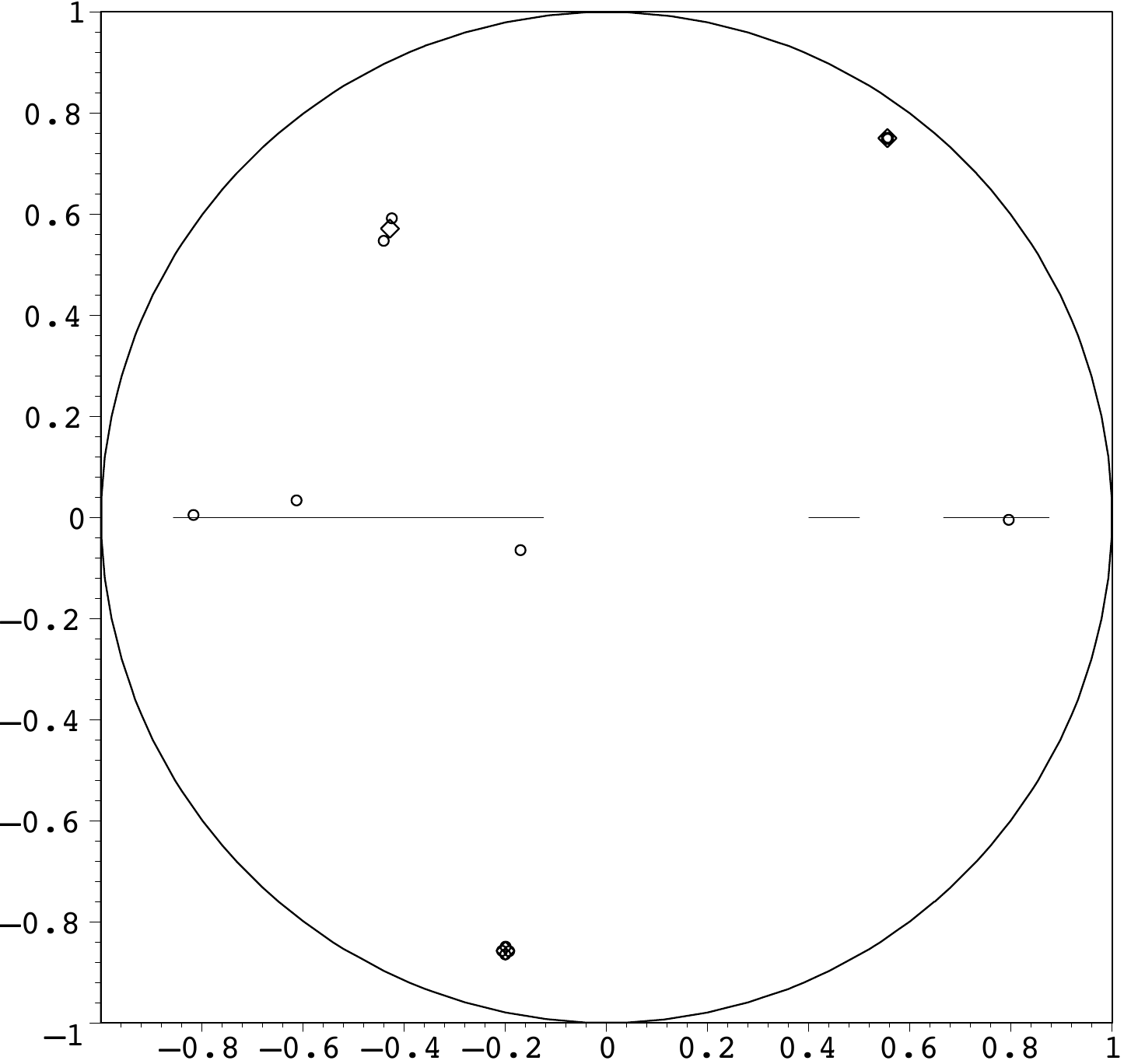}
\caption{\small AAK (left) and rational (right) approximants to $F$ of degree 13}
\end{figure}

On the figures the solid lines stand for the support of the measure, diamonds depict the polar singularities of $F$, and circles denote the poles of the corresponding approximants. Note that the poles of $F$ seem to attract the 
singularities first.

\appendix
\refstepcounter{section}
\section*{Appendix}
\label{prelnot}
\renewcommand{\theequation}{\Alph{section}.\arabic{equation}}
\renewcommand{\thesubsection}{\normalsize \Alph{section}.\arabic{subsection}}

Below we give a brief account of logarithmic potential theory that was used extensively throughout the paper. We refer the reader  to the monographs \cite{Ransford,SaffTotik} for a complete treatment.

The {\it logarithmic potential} and the {\it logarithmic energy} of a finite positive 
measure $\mu$, compactly supported in $\C$, are defined by
\begin{equation}
\label{eq1sL}
U^\mu(z):=\int\log\frac{1}{|z-t|}d\mu(t),~~~~z\in\C,
\end{equation}
and
\begin{equation}
\label{eq2sL}
I[\mu]:=\int U^\mu(z)d\mu(z)=\int\int\log\frac{1}{|z-t|}d\mu(t)d\mu(z),
\end{equation}
respectively. The function $U^\mu$ is superharmonic with values in $(-\infty,+\infty]$, which is not identically $+\infty$. It is bounded below on $\supp(\mu)$ so that $I[\mu]\in(-\infty,+\infty]$.

Let now $E\subset \C$ be compact and $\Lm(E)$ denote the set of all probability measures supported on $E$. If the logarithmic energy of every measure in $\Lm(E)$ is infinite, we say that $E$ is {\it polar}. Otherwise, there exists a unique $\mu_E\in\Lm(E)$ that minimizes the logarithmic energy over all measures in $\Lm(E)$. This measure is called the {\it equilibrium distribution} on $E$.  The {\it logarithmic capacity}, or simply the capacity, of $E$ is defined as
$$\cp(E)=\exp\{-I[\mu_E]\}.$$
By definition, the capacity of an arbitrary subset of $\C$ is the {\it supremum} of the capacities of its compact subsets. We agree that the capacity of a polar set is zero. We say that a sequence of functions $\{h_n\}$ converges {\it in capacity} to a function $h$ on a compact set $K$ if for any $\epsilon>0$ it holds that
\[
\lim_{n\to\infty} \cp\left(\left\{z\in K:~ |h_n(z)-h(z)| \geq \epsilon\right\}\right) = 0.
\]

Another important concept is the \emph{regularity} of a compact set. We restrict to the case when $E$ has connected complement. Let $g_{\overline\C\setminus E}(\cdot,t)$ be the Green function of $\overline\C\setminus E$ with pole at $t\in\overline\C\setminus E$, i.e. the unique function such that
\begin{itemize}
 \item [(i)]   $g_{\overline\C\setminus E}(z,t)$ is a positive harmonic function in $\left(\overline\C\setminus E\right)\setminus\{t\}$, which is bounded outside each neighborhood of $t$;
 \item [(ii)]  $\displaystyle g_{\overline\C\setminus E}(z,t) - \left\{\begin{array}{ll}\log|z|, & t=\infty, \\ -\log|z-t|, & t\neq\infty, \end{array}\right.$ is bounded near $t$;
 \item [(iii)] $\displaystyle \lim_{z\to\xi, \; z\in D}g_{\overline\C\setminus E}(z,t)=0$ for quasi every $\xi\in E$.
\end{itemize} 
Points of continuity of $g_{\overline\C\setminus E}(\cdot,t)$ on $\partial_e E$, the outer boundary of $E$, are called {\it regular}, other points on $\partial_e E$ are called irregular; the latter form a polar set. If every point of $\partial_e E$ is regular, we say that the whole set $E$ is regular.

Throughout we use the concept of {\it balayage} of a measure
(\cite[Sec. II.4]{SaffTotik}). Let $D$ be a domain (connected open set) with compact boundary $\partial
D$ whose complement has positive capacity, and $\mu$ be a finite Borel measure 
with compact support in $D$. 
Then there exists a unique Borel measure $\widehat\mu$ supported on 
$\partial D$, with total mass is equal to that of $\mu$: 
$\|\mu\|=\|\widehat\mu\|$, whose potential $U^{\widehat\mu}$ is bounded
on $\partial D$ and satisfies for some constant
$c(\mu;D)$
\begin{equation}
\label{eq:equalBal}
U^{\widehat\mu}(z) =    U^\mu(z)+c(\mu;D) 
\mbox{~~~~for q.e.} \;\;\; z\in\C\setminus D.
\end{equation}
Necessarily then, we have that $c(\mu;D)=0$ if $D$ is bounded and
$c(\mu;D)=\int g_D(t,\infty)d\mu(t)$ otherwise. 
Equality in (\ref{eq:equalBal}) holds for all $z\in\C\setminus\overline D$ 
and also at all regular points of $\partial D$. 
The measure $\widehat\mu$ is called the balayage of $\mu$ onto $\partial D$.
It has the property that
\begin{equation}
\label{bornebalayage}
U^{\widehat\mu}(z) \leq U^\mu(z)+c(\mu;D) 
\mbox{~~~~for every}  \;\;\; z\in\C,
\end{equation}
and also that
\begin{equation}
\label{balayageh}
\int h\,d\mu=\int h\,{d\widehat\mu}
\end{equation}
for any function $h$ which is harmonic in $D$ and continuous in $\overline{D}$
(including at infinity if $D$ is unbounded). From its defining properties
$\widehat\mu$ has finite energy, therefore it cannot charge polar sets. 
Consequently, on solving the generalized Dirichlet problem 
\cite[Thm. 4.1.5]{Ransford} for an arbitrary positive continuous function on 
$\partial D$, it follows from (\ref{balayageh}) that the balayage of a probability measure is a
probability measure. 

The minimal energy problem can also be formulated for signed measures
\cite[Thm. VIII.1.4]{SaffTotik}. In particular for
$E_1$, $E_2$ two disjoint compact sets of positive
capacity, there exists a unique measure $\mu^*=\mu_1^*-\mu_2^*$, with 
$\mu_1^*\in\Lm(E_1)$ and $\mu_2^*\in\Lm(E_2)$, that minimizes the energy 
integral
\begin{equation}
\label{signen}
I[\mu_1-\mu_2]=\int\log\frac{1}{|z-t|}d(\mu_1-\mu_2)(t)d(\mu_1-\mu_2)(z), \;\;\; \mu_j\in\Lm(E_j), \;\;\; j=1,2.\end{equation}
It can be proved (\cite[Lemma 1.8]{SaffTotik}) that $I[\mu^*]$ is 
positive and finite. The value $\cp(E_1,E_2)=1/I[\mu^*]$ is called the {\it
  condenser capacity} of the pair $(E_1,E_2)$. Further, it holds that
$\mu_1=\widehat\mu_2$
and $\mu_2=\widehat\mu_1$, where $\widehat\mu_1$ (resp. $\widehat\mu_2$)
indicates the balayage of $\mu_1$ (resp. $\mu_2$) onto 
$\partial(\overline{\C}\setminus E_2)$ (resp. $\partial(\overline{\C}\setminus
E_1)$); this property in fact characterizes $\mu^*$, see
\cite[Thm. VIII.2.6]{SaffTotik}. 

In analogy to the logarithmic case, one can define the {\it Green potential}
and the {\it Green energy} of a positive measure $\mu$ supported 
in a domain $D$ with compact non-polar boundary. 
The only difference is now that, in (\ref{eq1sL})-(\ref{eq2sL}), the
logarithmic kernel $\log(1/|z-t|)$ gets replaced by 
$g_D(z,t)$, the Green function for $D$ with pole at $t\in D$.  The Green potential relative to the domain $D$ of a finite
positive measure $\mu$ compactly supported in $D$ is given by
\[U_D^\mu(z)=\int g_D(z,t)\,d\mu(t).\]
It can be re-expressed in terms of the logarithmic potentials
of $\mu$ and of its balayage $\widehat\mu$ onto $\partial D$ by the formula
\cite[Thm. II.4.7 and Thm. II.5.1]{SaffTotik}
\begin{equation}
\label{eq:toRemind}
U^{\widehat\mu-\mu}(z) = c(\mu;D) - U_D^\mu(z), \;\;\; z\in D,
\end{equation}
where $c(\mu;D)$ was defined after equation (\ref{eq:equalBal}).
Moreover, (\ref{eq:toRemind}) continues to hold at every regular point of
$\partial D$; in particular, it holds q.e. on $\partial D$.

Exactly as in the logarithmic case, if $E$ is a compact nonpolar subset of
$D$,
there exists a unique measure $\mu_{(E,\partial D)}\in\Lm(E)$ that minimizes the
Green energy among all measures in $\Lm(E)$. This measure is called the {\it
  Green equilibrium distribution} on $E$ relative to $D$. 
By (\ref{eq:toRemind}) we have that
$$U_D^{\mu_{(E,\partial D)}}(z)=U^{\mu_{(E,\partial D)}}(z)-U^{\widehat{\mu_{(E,\partial D)}}}(z)+c(\mu_{(E,\partial D)};D),
~~~~z\in D,~~{\rm and~q.e.~} z\in\partial D,$$
where $\widehat{\mu_{(E,\partial D)}}$ is the balayage of $\mu_{(E,\partial D)}$ onto $\partial D$.
In addition, the Green equilibrium distribution satisfies
\begin{equation}
\label{eq:GreenEqual}
U_D^{\mu_{(E,\partial D)}}(z)=\frac{1}{\cp(E,\partial D)}, \;\;\; \mbox{for q.e.} \;\;\; z\in E,
\end{equation}
where $\cp(E,\partial D)$ is {\it Green (condenser) capacity} of $E$ relative to $D$ which is the reciprocal of the minimal Green energy among all measures in $\Lm(E)$. Moreover, equality in (\ref{eq:GreenEqual}) holds at all regular points of $E$.

For the reader's convenience, we formulate below a proposition that was of
particular use to us. It has to do with the specific geometry 
of the disk, and we could not find an appropriate
reference for it in the literature. The proof of this proposition can be found in \cite[Prop. A.1]{thYat}.
\newline
\newline
{\bf Proposition A} {\it Let $E\subset\D$ be a compact set of positive
  capacity not containing $0$ with connected complement, 
and $E^*$ stand for its reflection across the unit circle,
i.e. $E^*:=\{z\in\C: \; 1/\bar z\in E\}$. Further, let $\mu\in\Lm(E)$ and
$\sigma\in\Lm(E^*)$ solve the signed energy problem for the condenser 
$(E,E^*)$. Then, we have that
\begin{itemize}
\item[(a)] $\sigma$ is reflected from $\mu$ across the unit circle,
i.e. $\sigma(B)=\mu(B^*)$ for any Borel set $B$, and likewise $\mu$ is
reflected from $\sigma$;        
\item[(b)] $\mu$ is the Green equilibrium distribution on $E$ relative to $\overline\C\setminus E^*$ and $\sigma$ is the Green equilibrium distribution on $E^*$ relative to $\overline\C\setminus E$;
        \item[(c)] $\widetilde\mu=\widetilde\sigma$, where $\widetilde\lambda$ denotes the balayage of the measure $\lambda$ on $\T$. Moreover, the balayage of $\widetilde\mu$ onto $E$ is $\mu$ and the balayage of $\widetilde\mu$ onto $E^*$ is $\sigma$;
        \item[(d)] $\widetilde\mu$ is the Green equilibrium distribution on
          $\T$ relative to both $\overline\C\setminus E$ and $\overline\C\setminus E^*$;
        \item[(e)] $\mu$ is the Green equilibrium distribution on $E$ relative to $\D$ and $\sigma$ is the Green equilibrium distribution on $E^*$ relative to $\C\setminus\overline\D$.
\end{itemize}}

\bibliographystyle{plain}
\small
\bibliography{aak}

\begin{thebibliography}{10}

\bibitem{AAK71}
V.~M. Adamyan, D.~Z. Arov, and M.~G. Krein.
\newblock Analytic properties of {S}chmidt pairs for a {H}ankel operator on the
  generalized {S}chur-{T}akagi problem.
\newblock {\em Math. USSR Sb.}, 15:31--73, 1971.

\bibitem{Ank83}
A.~Ancona.
\newblock D\'emonstration d'une conjecture sur la capacit\'e et l'effilement.
\newblock {\em C. R. Acad. Sci. Paris S}, 297(7):393--395, 1983.

\bibitem{Ank_CTP84}
A.~Ancona.
\newblock Sur une conjecture concernant la capacit\'e et l'effilement.
\newblock In G.~Mokobodzi and D.~Pinchon, editors, {\em Colloque du Th\'eorie
  du Potentiel (Orsay, 1983)}, volume 1096 of {\em Lecture Notes in
  Mathematics}, pages 34--68, Springer-Verlag, Berlin, 1984.

\bibitem{And94}
J.~E. Andersson.
\newblock Best rational approximation to {M}arkov functions.
\newblock {\em J. Approx. Theory}, 76:219--232, 1994.

\bibitem{BKT05}
L.~Baratchart, R.~K\"ustner, and V.~Totik.
\newblock Zero distribution via orthogonality.
\newblock {\em Ann. Inst. Fourier}, 55(5):1455--1499, 2005.

\bibitem{BLP00}
L.~Baratchart, J.~Leblond, and J.~R. Partington.
\newblock Problems of {A}damyan-{A}rov-{K}rein type on subsets of the circle
  and minimal norm extentions.
\newblock {\em Constr. Approx.}, 16(3):333--357, 2000.

\bibitem{BMSW06}
L.~Baratchart, F.~Mandr\`ea, E.~B. Saff, and F.~Wielonsky.
\newblock 2-{D} inverse problems for the {L}aplacian: a meromorphic
  approximation approach.
\newblock {\em J. Math. Pures Appl.}, 86:1--41, 2006.

\bibitem{BPS01a}
L.~Baratchart, V.~Prokhorov, and E.~B. Saff.
\newblock Best meromorphic approximation of {M}arkov functions on the unit
  circle.
\newblock {\em Found. Comput. Math.}, 1(4):385--416, 2001.

\bibitem{BSW96}
L.~Baratchart, E.~B. Saff, and F.~Wielonsky.
\newblock A criterion for uniqueness of a critical points in {$H^2$} rational
  approximation.
\newblock {\em J. Analyse Math\'ematique}, 70:225--266, 1996.

\bibitem{BS02}
L.~Baratchart and F.~Seyfert.
\newblock An ${L}^p$ analog of {AAK} theory for $p\geq2$.
\newblock {\em J. Funct. Anal.}, 191(1):52--122, 2002.

\bibitem{BStW01}
L.~Baratchart, H.~Stahl, and F.~Wielonsky.
\newblock Asymptotic error estimates for ${L}^2$ best rational approximants to
  {M}arkov functions.
\newblock {\em J. Approx. Theory}, 108(1):53--96, 2001.

\bibitem{uBY2}
L.~Baratchart and M.~Yattselev.
\newblock Multipoint {P}ad{\'e} approximants to complex {C}auchy transforms
  with polar singularities.
\newblock {\it Submitted for publication}.

\bibitem{Br87}
D.~Braess.
\newblock Rational approximation of {S}tieltjes functions by the
  {C}arath\'eodory-{F}ej\'er method.
\newblock {\em Constr. Approx.}, 3:43--50, 1987.

\bibitem{Garnett}
J.~B. Garnett.
\newblock {\em Bounded Analytic Functions}.
\newblock Academic Press, New York, 1981.

\bibitem{Glov84}
K.~Glover.
\newblock All optimal {H}ankel--norm approximation of linear multivariable
  systems and their ${L}^\infty$--error bounds.
\newblock {\em Int. J. Control}, 39(6):306--352, 1984.

\bibitem{Gon75b}
A.~A. Gonchar.
\newblock On the convergence of generalized {P}ad\'e approximants of
  meromorphic functions.
\newblock {\em Mat. Sb.}, 98(140):564--577, 1975.
\newblock English transl. in {\it {M}ath. {USSR} {S}b.} 27:503--514, 1975.

\bibitem{Gon78b}
A.~A. Gonchar.
\newblock On the convergence of generalized {P}ad\'e approximants of
  meromorphic functions.
\newblock {\em Zap. Nauchn. Sem. Leningrad. Otdel Mat. Inst. Steklov.},
  81:182--185, 1978.
\newblock English transl. in {\it {J}. {S}oviet {M}ath.} 26(5), 1984.

\bibitem{GL78}
A.~A. Gonchar and G.~L\'opez Lagomasino.
\newblock On {M}arkov's theorem for multipoint {P}ad\'e approximants.
\newblock {\em Mat. Sb.}, 105(4):512--524, 1978.
\newblock English transl. in {\it{M}ath. {USSR} {S}b.} 34(4):449--459, 1978.

\bibitem{GRakh87}
A.~A. Gonchar and E.~A. Rakhmanov.
\newblock Equilibrium distributions and the degree of rational approximation of
  analytic functions.
\newblock {\em Mat. Sb.}, 134(176)(3):306--352, 1987.
\newblock English transl. in {\it {M}ath. {USSR} Sbornik} 62(2):305--348, 1989.

\bibitem{rGr}
J.~Grimm.
\newblock Rational approximation of transfer functions in the hyperion
  software.
\newblock Technical Report 4002, INRIA, September 2000.

\bibitem{HTG90}
E.~Hayashi, L.~N. Trefethen, and M.~H. Gutknecht.
\newblock The {C}{F} {T}able.
\newblock {\em Constr. Approx.}, 6(2):195--223, 1990.

\bibitem{thKus}
R.~K\"ustner.
\newblock {\em Asymptotic zero distribution of orthogonal polynomials with
  respect to complex measures having argument of bounded variation}.
\newblock PhD thesis, {U}niversity of {N}ice {S}ophia {A}ntipolis, Sophia
  Antipolis, France, 2003.
\newblock http://www.inria.fr/rrrt/tu-0784.html.

\bibitem{Lev69}
A.~L. Levin.
\newblock The distribution of poles of rational functionsof best approximation
  and related questions.
\newblock {\em Math. USSR Sbornik}, 9(2):267--274, 1962.

\bibitem{Partington}
J.~R. Partington.
\newblock {\em An Introduction to {H}ankel operators}.
\newblock Student texts in Maths. Cambridge University Press, Cambridge, UK,
  1988.

\bibitem{Pel80}
V.~V. Peller.
\newblock Hankel operators of class $\mathbf{S}_p$ and their applications
  (rational approximation, gaussian processes, the problem of majorizing
  operators).
\newblock {\em Mat. Sb.}, 41:538--581, 1980.
\newblock English transl. in {\it {M}ath. {USSR} {S}bornik} 41:443-479, 1982.

\bibitem{Pel83}
V.~V. Peller.
\newblock A description of {H}ankel operators of class $\mathbf{S}_p$ for
  $p>0$, an investigation of the rate rational approximation, and other
  applications.
\newblock {\em Mat. Sb.}, 122:481--510, 1983.
\newblock English trans. in {\it {M}ath. {USSR} {S}bornik} 50:465-494, 1985.

\bibitem{Peller}
V.~V. Peller.
\newblock {\em Hankel Operators and Their Applications}.
\newblock Springer Monographs in Mathematics. Springer-Verlag, New York, 2003.

\bibitem{Powell}
M.~J.~D. Powell.
\newblock {\em Approximation Theory and Methods}.
\newblock Cambridge University Press, Cambridge, 1981.

\bibitem{Pr93a}
V.~A. Prokhorov.
\newblock On a theorem of {A}damyan, {A}rov, and {K}rein.
\newblock {\em Ross. Acad. Nauk Matem. Sb.}, 184(1), 1993.
\newblock English transl. in {\it {R}ussian {S}ci. {S}b. {M}ath.} 78:77--90,
  1994.

\bibitem{Pr02}
V.~A. Prokhorov.
\newblock On ${L}^p$-generalization of a theorem of {A}damyan, {A}rov, and
  {K}rein.
\newblock {\em J. Approx. Theory}, 116(2):380--396, 2002.

\bibitem{Ransford}
T.~Ransford.
\newblock {\em Potential Theory in the Complex Plane}, volume~28 of {\em London
  Mathematical Society Student Texts}.
\newblock Cambridge University Press, Cambridge, 1995.

\bibitem{SaffTotik}
E.~B. Saff and V.~Totik.
\newblock {\em Logarithmic Potentials with External Fields}, volume 316 of {\em
  Grundlehren der Math. Wissenschaften}.
\newblock Springer-Verlag, Berlin, 1997.

\bibitem{StahlTotik}
H.~Stahl and V.~Totik.
\newblock {\em General Orthogonal Polynomials}, volume~43 of {\em Encycl.
  Math.}
\newblock Cambridge University Press, Cambridge, 1992.

\bibitem{thYat}
M.~Yattselev.
\newblock {\em Meromorphic approximation and non-{H}ermitian orthogonality}.
\newblock PhD thesis, Vanderbilt University, Nashville, TN, USA, 2007.
\newblock
  http://etd.library.vanderbilt.edu/ETD-db/available/etd-07262007-180105/unres%
tricted/thesis.pdf.

\bibitem{Young}
N.~J. Young.
\newblock {\em An Introduction to Hilbert Space}.
\newblock Cambridge University Press, Cambridge, 1988.

\end{thebibliography}

\end{document}